\newcommand\sbullet[1][.5]{\mathbin{\vcenter{\hbox{\scalebox{#1}{$\bullet$}}}}}
\theoremstyle{plain}
\newtheorem{theorem}{Theorem}[section]
\newtheorem{lemma}[theorem]{Lemma}
\newtheorem{proposition}[theorem]{Proposition}
\newtheorem{corollary}[theorem]{Corollary}
\newtheorem{conjecture}[theorem]{Conjecture}
\newtheorem{convention}[theorem]{Convention}
\theoremstyle{definition}
\newtheorem{definition}[theorem]{Definition}
\newtheorem{example}[theorem]{Example}
\newtheorem{remark}[theorem]{Remark}
\numberwithin{equation}{section} \numberwithin{figure}{section}
\numberwithin{table}{section}
\newcommand{\seteq}{\mathbin{:=}}
\newcommand{\GL}{\operatorname{GL}}
\newcommand{\Sp}{\operatorname{Sp}}
\newcommand{\USp}{\operatorname{USp}}
\newcommand{\GSp}{\operatorname{GSp}}
\newcommand{\U}{\operatorname{U}}
\newcommand{\SU}{\operatorname{SU}}
\newcommand{\tr}{\operatorname{tr}}
\newenvironment{red}{\relax\color{red}}{\relax}
\newenvironment{blue}{\relax\color{blue}}{\hspace*{.5ex}\relax}
\newenvironment{jaune}{\relax\color{green}}{\hspace*{.5ex}\relax}
\newcommand{\ber}{\begin{red}}
\newcommand{\er}{\end{red}}
\newcommand{\beb}{\begin{blue}}
\newcommand{\eb}{\end{blue}}
\newcommand{\bjn}{\begin{jaune}}
\newcommand{\ejn}{\end{jaune}}
\newcommand{\Z}{\mathbb{Z}}
\newcommand{\Pset}{\Phi_{\mathtt A_1 \times \mathtt A_1}}
\newcommand{\Oone}{\overline{1}}
\newcommand{\Otwo}{\overline{2}}
\newcommand{\Otk}{1^k}
\newcommand{\xs}{x^s}
\newcommand{\Okb}{1^{k-b}}
\newcommand{\OTb}{\Otwo^b}
\newcommand{\Ob}{1^b}
\newcommand{\Tb}{2^b}
\newcommand{\Ott}{1^t}
\newcommand{\Ttk}{2^k}
\newcommand{\Ttt}{2^t}
\newcommand{\OTtk}{\Otwo^k}
\newcommand{\OTtt}{\Otwo^t}
\newcommand{\Ooneck}{\Oone^{c-k}}
\newcommand{\Oonet}{\Oone^{t}}
\newcommand{\soplus}{\mathop{\mbox{\normalsize$\bigoplus$}}\limits}
\newcommand{\tablone}
{
\xy
(0,0)*{};(48,0)*{} **\dir{-};
(0,-6)*{};(0,6)*{} **\dir{-};
(8,-6)*{};(8,6)*{} **\dir{-};
(16,-6)*{};(16,6)*{} **\dir{-};
(24,-6)*{};(24,6)*{} **\dir{-};
(32,-6)*{};(32,6)*{} **\dir{-};
(40,0)*{};(40,6)*{} **\dir{-};
(48,0)*{};(48,6)*{} **\dir{-};
(0,6)*{};(48,6)*{} **\dir{-};
(0,-6)*{};(32,-6)*{} **\dir{-};
(44,3)*{\Ooneck};(36,3)*{\Ttk};(28,3)*{\Ttt};(20,3)*{2};(12,3)*{\Ott};(4,3)*{\Otk};
(28,-3)*{\Oonet};(20,-3)*{\Otwo};(12,-3)*{\OTtt};(4,-3)*{\OTtk};
\endxy
}
\newcommand{\tabltwo}
{
\xy
(0,0)*{};(40,0)*{} **\dir{-};
(0,-6)*{};(0,6)*{} **\dir{-};
(8,-6)*{};(8,6)*{} **\dir{-};
(16,-6)*{};(16,6)*{} **\dir{-};
(24,-6)*{};(24,6)*{} **\dir{-};
(32,0)*{};(32,6)*{} **\dir{-};
(40,0)*{};(40,6)*{} **\dir{-};
(0,6)*{};(40,6)*{} **\dir{-};
(0,-6)*{};(24,-6)*{} **\dir{-};
(36,3)*{\Ooneck};(28,3)*{\Ttk};(20,3)*{\Ttt};(12,3)*{\Ott};(4,3)*{\Otk};
(20,-3)*{\Oonet};(12,-3)*{\OTtt};(4,-3)*{\OTtk};
\endxy
}
\newcommand{\tablthree}
{
\xy
(0,0)*{};(32,0)*{} **\dir{-};
(0,-6)*{};(0,6)*{} **\dir{-};
(8,-6)*{};(8,6)*{} **\dir{-};
(16,0)*{};(16,6)*{} **\dir{-};
(24,0)*{};(24,6)*{} **\dir{-};
(32,0)*{};(32,6)*{} **\dir{-};
(0,6)*{};(32,6)*{} **\dir{-};
(0,-6)*{};(8,-6)*{} **\dir{-};
(28,3)*{\Ooneck};(20,3)*{\Tb};(12,3)*{\Okb};(4,3)*{\Ob};
(4,-3)*{\OTb};
\endxy
}
\newcommand{\tablas}
{
\xy
(0,-3)*{};(6,-3)*{} **\dir{-};
(0,-3)*{};(0,3)*{} **\dir{-};
(6,-3)*{};(6,3)*{} **\dir{-};
(0,3)*{};(6,3)*{} **\dir{-};
(3,0)*{\xs};
\endxy
}
\newcommand{\tablasd}
{
\xy
(0,-3)*{};(24,-3)*{} **\dir{-};
(6,-3)*{};(6,3)*{} **\dir{-};
(18,-3)*{};(18,3)*{} **\dir{-};
(0,-3)*{};(0,3)*{} **\dir{-};
(24,-3)*{};(24,3)*{} **\dir{-};
(0,3)*{};(24,3)*{} **\dir{-};
(3,0)*{x}; (12,0)*{\cdots}; (21,0)*{x};
\endxy
}
\begin{document}

\title[Sato--Tate distributions and symplectic characters]
{Auto-correlation functions of Sato--Tate distributions and identities of symplectic characters}

\author[K.-H. Lee]{Kyu-Hwan Lee}
\address{Department of Mathematics, University of Connecticut, Storrs, CT 06269, U.S.A.}
\email{khlee@math.uconn.edu}

\author[S.-j. Oh]{Se-jin Oh$^{\dagger}$}
\address{Department of Mathematics, Ewha Womans University, Seoul 120-750, South Korea}
\email{sejin092@gmail.com}
\thanks{$^{\dagger}$The research of S.-j.\ Oh was supported by the Ministry of Education of the Republic of Korea and the National Research Foundation of Korea (NRF-2019R1A2C4069647)}

\date{\today}

\begin{abstract}
The Sato--Tate distributions for genus $2$ curves (conjecturally) describe
the statistics of numbers of rational points on the curves. In this paper,
we explicitly compute the auto-correlation functions of Sato--Tate distributions for genus 2 curves  as sums of irreducible characters of symplectic groups.
Our computations bring about families of identities involving irreducible characters of symplectic groups $\Sp(2m)$ for all $m \in \Z_{\ge1}$,
which have interest in their own rights.
\end{abstract}

\maketitle

\section{Introduction}

The connection between random matrices and number theory was first conjectured by H. L. Montgomery \cite{Mont} in 1973 from the observation made by himself and F. J. Dyson that the two-point correlation function of the zeros of the Riemann zeta function is the same as the two-point correlation function of the eigenvalues of random matrices. Since then extensive research on correlation functions of $L$-functions and random matrices has been performed. For example, see \cite{BK,BK-1,Hej,KS,Rub,RS}.

In 2000, Keating and Snaith \cite{KeaSn,KeaSn-1} studied averages of characteristic polynomials of random matrices motivated in part by this connection to number theory and in part by the importance of these averages in quantum chaos \cite{AS}. Over the years it has become clear that averages of characteristic polynomials are fundamental for random matrix models, and many important developments have been made to the theory \cite{BDS,BS,BH,BH-2,FS,FS-1,MN}.
In particular, the auto-correlation functions of the distributions of
characteristic polynomials in the compact classical groups were computed by
Conrey, Farmer, Keating, Rubinstein and Snaith \cite{CFKRS,CFKRS-1} in connection with
conjectures for integral moments of zeta and $L$-functions, and by
Conrey, Farmer and Zirnbauer~\cite{CFZ,CFZ-1} in connection with conjectures
for ratios of $L$-functions. Later, Bump and Gamburd \cite{BG} obtained
different derivations of such formulae applying the symmetric function theory
and (analogues of) the dual Cauchy identity along with classical results due
to Weyl and Littlewood. Their results show that the auto-correlation functions
are actually combinations of characters of classical groups.

More precisely, Conrey, Farmer, Keating, Rubinstein and Snaith \cite{CFKRS} computed Selberg's integrals to obtain the following formula (and many other formulae):
\begin{equation} \label{eq-sp} \int_{\USp(2g)}  \left ( \prod_{j=1}^m \det (I+ x_j \gamma) \right ) d \gamma = \sum_{\lambda:\ \textrm{even}} s_\lambda (x_1, \cdots , x_m), \end{equation}
where  $s_\lambda$ is the Schur function and the sum is over partitions $\lambda=(\lambda_1, \dots , \lambda_m)$ with all parts $\lambda_j$ even and $2g \ge \lambda_1 \ge \cdots \ge \lambda_m \ge 0$.
The left-hand side of \eqref{eq-sp} is the average of the products of characteristic polynomials of random matrices in $\USp(2g)$ as $d\gamma$ is the Haar probability measure on $\USp(2g)$. Here, since $-\gamma \in \USp(2g)$ for $\gamma \in \USp(2g)$, we may use $\det (I+ x_j \gamma)$.

In a totally different way, Bump and Gamburd \cite{BG} could compute the same integral to obtain
\begin{equation} \label{eq-ssspp}
\int_{\USp(2g)}  \left ( \prod_{j=1}^m \det (I+ x_j \gamma) \right ) d \gamma = (x_1 \dots x_m)^g \, \chi^{\Sp(2m)}_{(g^m)} (x_1^{\pm 1}, \dots , x_m^{\pm 1}), \end{equation}
where $\chi^{\Sp(2m)}_{(g^m)}$ is the irreducible character of $\Sp(2m) \footnote{By this notation we mean $\Sp(2m, \mathbb C)$ or $\USp(2m)$ accodring to the context. We keep this ambiguity for notational convenience.}$ associated with the rectangular partition $(g^m)$.  The equality of the right-hand sides of \eqref{eq-sp} and \eqref{eq-ssspp} can be shown directly through branching of the character $\chi^{\Sp(2m)}_{(g^m)}$.
In fact, Bump and Gamburd used  an analogue of the dual Cauchy identity due to Jimbo--Miwa \cite{JM} and Howe \cite{Howe}:
\begin{equation} \label{eqn-111} \prod_{i=1}^m \prod_{j=1}^g (x_i +x_i^{-1}+t_j+t_j^{-1}) = \sum_{\lambda \trianglelefteq (g^m)} \chi_\lambda^{\Sp(2m)}(x_1^{\pm 1}, \dots , x_m^{\pm 1})
\chi_{\tilde \lambda}^{\Sp(2g)}(t_1^{\pm 1}, \dots , t_g^{\pm 1}) ,\end{equation}
where $\tilde \lambda=(m-\lambda'_g, \dots , m-\lambda'_1)$ with $\lambda' =(\lambda'_1, \dots , \lambda'_g)$ the transpose of $\lambda$. This identity can be considered as a reflection of Howe duality.

While most of the connections of number theory to random matrices are still conjectural, the celebrated Sato--Tate conjecture for elliptic curves is directly related to random matrices in nature and has been proved (under some conditions) by the works of R. Taylor, jointly with L. Clozel, M. Harris, and N. Shepherd-Barron \cite{CHT,T,HSBT}.
The conjecture concerns the distribution of Euler factors of an elliptic curve over a number field. It specifically predicts that this distribution always takes one of the three forms, one occurring whenever the elliptic curve fails to have complex multiplication, which is the generic case, and two exceptional cases arising for curves with complex multiplication, and furthermore says that all three distributions are the same as the distributions of eigenvalues of random matrices in the compact groups $\SU(2)$, $\U(1)$ and $N(\U(1))$, respectively, where $N(\U(1))$ is the normalizer of $\U(1)$ in $\SU(2)$.

This amazing structural randomness in arithmetic data is not
expected to be restricted to the case of elliptic curves. Indeed,
J.-P. Serre, N. Katz and P. Sarnak proposed a generalized Sato--Tate
conjecture for curves of higher genera  \cite{Ser,KS}. Pursuing this
direction, K. S. Kedlaya and A. V. Sutherland \cite{KS09} and later
together with F. Fit{\' e} and V. Rotger \cite{FKRS}  made a list of
55 compact subgroups of $\USp(4)$ called {\em Sato--Tate groups}
that would classify all the distributions of Euler factors for
abelian surfaces and showed that at most 52 of them can actually
arise from abelian surfaces\footnote{In \cite{FKS-1}, an additional
Sato--Tate axiom is included for abelian varieties of dimension $\le
3$, and it eliminates those $3$ groups which do not arise for
abelian surfaces.}.

For example, the hyperelliptic curve $y^2 = x^5 - x + 1$ over $\mathbb Q$ falls into the generic case and has the distribution given by the group $\USp(4)$, while the curve $y^2 = x^6 + 2$ over $\mathbb Q(\sqrt{-3})$ has the distribution given by the subgroup $\langle \U(1), \pmb{\zeta}_{12} \rangle$  of $\USp(4)$, where $\pmb{\zeta}_{12}$ is a primitive $12^{\mathrm{th}}$ root of unity (embedded into $\USp(4)$). Thus $\USp(4)$ and  $\langle \U(1), \pmb{\zeta}_{12} \rangle$ are examples of the Sato--Tate groups for genus $2$ curves.

 Through enormous amount of computer computation \cite{FKRS}, they found abelian surfaces whose Euler factors have the same distributions as the distributions of characteristic polynomials of the $52$ Sato--Tate groups.
 Since then, the Sato--Tate conjecture for abelian surfaces defined over $\mathbb Q$, which covers 34 Sato--Tate groups, has been established by C. Johansson and N. Taylor \cite{Joh17, Tay20} except for the generic case $\USp(4)$.
Actually, those three groups, which do not arise for abelian surfaces, and two other subgroups of $\USp(4)$ appear when certain motives of weight $3$ are considered in \cite{FKS}. Thus all the 57 groups are interesting in number theory and arithmetic geometry, and we will call all of them Sato--Tate groups in what follows.

\medskip

In this paper, we describe the distributions of characteristic polynomials of random matrices in Sato--Tate groups $H \le \USp(4)$ by computing  the auto-correlation functions
\begin{equation} \label{eq-sp-1} \int_{H}  \prod_{j=1}^m \det (I+ x_j \gamma)d \gamma, \qquad m \in \mathbb Z_{\ge 1},
\end{equation}
as sums of irreducible characters of $\Sp(2m)$. We consider all
the 57  Sato--Tate groups $H \le \USp(4)$, and obtain the following result.

\begin{theorem} \label{thm-genus-2-1}
Let $H \le \USp(4)$ be a Sate--Tate group. Then, for any $m \in \mathbb Z_{\ge 1}$, we have
\begin{align*}
\int_{H}  \prod_{j=1}^m \det (I+ x_j \gamma)  d
\gamma & =  (x_1 \cdots x_m)^2 \sum_{b=0}^{m} \sum_{z=0}^{\lfloor \frac {m-b} 2 \rfloor} \mathfrak m_{(b+2z,b)} \chi_{(2^{m-b-2z}, 1^{2z})}^{\Sp(2m)} ,
\end{align*}
where the coefficients $\mathfrak m_{(b+2z,b)}$ are the multiplicities of the trivial representation in the restrictions $\chi^{\Sp(4)}_{(b+2z,b)}\big |_H$ and are explicitly given in Table \ref{tab-1}.
\end{theorem}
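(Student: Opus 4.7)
The plan is to combine the dual Cauchy identity \eqref{eqn-111} specialized to $g=2$ with the parity constraint coming from $-I\in H$. Since $\gamma \in \USp(4)\supseteq H$ has eigenvalues $t_1,t_1^{-1},t_2,t_2^{-1}$, the factorization $(1+xt)(1+xt^{-1}) = x(x+x^{-1}+t+t^{-1})$ gives
\[
\prod_{i=1}^{m}\det(I+x_i\gamma) \;=\; (x_1\cdots x_m)^{2}\prod_{i=1}^{m}\prod_{j=1}^{2}\bigl(x_i+x_i^{-1}+t_j+t_j^{-1}\bigr).
\]
Applying \eqref{eqn-111} with $g=2$ expands the integrand as
\[
(x_1\cdots x_m)^{2}\sum_{\lambda \trianglelefteq (2^m)} \chi^{\Sp(2m)}_\lambda(x_1^{\pm 1},\dots,x_m^{\pm 1})\, \chi^{\Sp(4)}_{\tilde\lambda}(t_1^{\pm 1},t_2^{\pm 1}),
\]
where each $\lambda=(2^{a},1^{b'})$ with $a+b'\le m$ and $\tilde\lambda=(m-a,\,m-a-b')$. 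Integrating term by term against the Haar measure on $H$, only $\chi^{\Sp(4)}_{\tilde\lambda}$ depends on $\gamma$, and by Schur orthogonality $\int_{H}\chi^{\Sp(4)}_{\tilde\lambda}\,d\gamma$ equals the multiplicity $\mathfrak m_{\tilde\lambda}$ of the trivial representation in $\chi^{\Sp(4)}_{\tilde\lambda}\big|_H$.

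To reduce to the form stated in the theorem, I would invoke the Sato--Tate axiom that $-I\in H$ for all $57$ groups. For any irreducible character of $\Sp(4)$ one has $\chi^{\Sp(4)}_\mu(-\gamma)=(-1)^{|\mu|}\chi^{\Sp(4)}_\mu(\gamma)$, because every weight $\nu$ of $V_\mu$ satisfies $\sum\nu_i\equiv|\mu|\pmod 2$ (the root lattice of $\Sp(4)$ lies in the sublattice with even coordinate sum). Since $\gamma\mapsto -\gamma$ is a Haar-measure-preserving bijection of $H$, this forces $\mathfrak m_{\tilde\lambda}=0$ whenever $|\tilde\lambda|=2m-2a-b'$ is odd, i.e.\ whenever $b'$ is odd. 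Setting $b'=2z$ and $b=m-a-b'$, so that $\lambda=(2^{m-b-2z},1^{2z})$ and $\tilde\lambda=(b+2z,b)$, and translating the box condition $a+b'\le m$ into $0\le b\le m$, $0\le z\le\lfloor(m-b)/2\rfloor$, yields exactly the double sum in the theorem.

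The main obstacle lies not in this uniform character-theoretic reduction but in the $57$-fold computation of the multiplicities $\mathfrak m_{(b+2z,b)}$ recorded in Table \ref{tab-1}. For each Sato--Tate group $H$ one must decompose $\chi^{\Sp(4)}_{(b+2z,b)}\big|_H$ and extract the dimension of the $H$-invariants as an explicit function of $b$ and $z$. For the finite $H$ this becomes a direct but substantial character-table computation, while for the positive-dimensional $H$ one must combine Weyl integration on the connected component with the averaging over the component group; organizing these branching computations in a uniform, case-by-case manner is the genuinely laborious part of the proof.
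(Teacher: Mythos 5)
Your reduction of the integral is correct and is essentially the paper's own argument: the factorization $(1+x_it_j)(1+x_it_j^{-1})=x_i(x_i+x_i^{-1}+t_j+t_j^{-1})$, the dual Cauchy identity with $g=2$, and Schur orthogonality together are exactly Proposition \ref{prop-main}, and your reindexing $\lambda=(2^{m-b-2z},1^{2z})$, $\tilde\lambda=(b+2z,b)$ with the stated ranges is the paper's. Your vanishing argument for $a-b$ odd is a valid (and slightly cleaner) variant of the paper's: the paper observes that each $H$ contains a circle $\mathrm{diag}(u,u,u^{-1},u^{-1})$ or $\mathrm{diag}(u,u^{-1},u^{-1},u)$ and that no weight of $V^{\Sp(4)}_{(a,b)}$ can be invariant under it when $a-b$ is odd; your use of $-I\in H$ together with $\chi^{\Sp(4)}_{\mu}(-\gamma)=(-1)^{|\mu|}\chi^{\Sp(4)}_{\mu}(\gamma)$ reaches the same conclusion, and $-I$ does lie in all $57$ groups since each contains a Hodge circle.

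The genuine gap is that the theorem asserts the explicit closed-form values of $\mathfrak m_{(b+2z,b)}$ recorded in Table \ref{tab-1}, and your proposal stops exactly where that work begins; this is not a routine afterthought but the content of Sections \ref{sec-g-2} and \ref{sec-branching}. Concretely, the paper (i) computes the invariant multiplicities for the connected cases ($C_1$, $E_1$, $F$, $G_{1,3}$, $G_{3,3}$, $\U(2)$, $\USp(4)$) by restricting $B_{\mathtt C_2}(a,b)$ to $\mathtt A_1\times\mathtt A_1$ and to Levi subalgebras using crystal bases (Propositions \ref{prop: A1xA1}, \ref{prop:cn1}, \ref{prop:e1}, \ref{f-form}); (ii) handles the disconnected groups by coset decomposition, exploiting the key observation that on many cosets the characteristic polynomial is independent of the group element (e.g.\ $\det(I+xJ\pmb{\zeta}_{2n}\gamma)=1-\kappa_nx^2+x^4$ for every $\gamma\in C_1$), which reduces those contributions to elementary $\U(1)$-integrals; and (iii) for the twisted components determines the sign with which the outer elements $J$, $\mathbf j$, $\mathtt a$ act on each invariant weight vector via explicit weight-vector constructions (Propositions \ref{prop-J-imp}, \ref{e1}, \ref{fa-prop}). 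Your sketch of ``Weyl integration on the identity component plus averaging over the component group'' is the right idea in principle, but note that none of the $57$ groups is finite (each contains $\U(1)$), so the ``character-table computation for finite $H$'' you invoke applies to no case; and without carrying out steps of type (i)--(iii) the closed-form dependence on $(z,b)$ in Table \ref{tab-1} --- which is what the theorem actually claims --- is not established.
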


As the statement of the theorem manifests, this result can also be interpreted as a result on branching rules in representation theory. Indeed, our approach starts with the identity \eqref{eqn-111} and converts the problem to the branching rules of irreducible characters of $\USp(4)$ restricted to Sato--Tate groups $H$. Though a few of the Sato--Tate groups invoke classical branching rules, almost all of them call for a new study because they are disconnected and involve twists by automorphisms.

In classical situations, branching rules invoke many interesting combinatorial questions (e.g. \cite{King,KT}).
With Sato--Tate groups, we also meet various combinatorial problems, and adopt {\em crystals} as our main combinatorial tools for the problems. One can find details about crystals, for example, in \cite{BuSch, HK, Kash}.
However, in some cases, we need more concrete realizations of representations of $\USp(4)$.

\medskip

Moreover, since most of the Sato--Tate groups are disconnected, we can decompose the integral
\eqref{eq-sp-1} according to coset decompositions, and find that the characteristic polynomials over some cosets are \emph{independent} of the elements of the cosets. Combining this observation with the computations of branching rules, we obtain families of non-trivial identities of irreducible characters of $\Sp(m)$ for all $m \in \Z_{\ge1}$ as follows.

\begin{theorem} \label{thm-identities-1}
Let $ \kappa_1=-2 , \kappa_2=2, \kappa_3=1, \kappa_4=0, \text{ and } \kappa_6=-1$.
Then,
for any $m \in \mathbb Z_{\ge 1}$ and $n=1,2,3,4,6$, we have
\begin{align*} \tag{1}
\prod_{i=1}^m (x_i^2+\kappa_n+x_i^{-2}) &= \sum_{b=0}^m \sum_{z=0}^{\lfloor \frac {m-b} 2 \rfloor} \psi_n(z,b) \chi_{(2^{m-b-2z}, 1^{2z})}^{\Sp(2m)} ,
\end{align*}
where $\psi_n(z,b) \in \mathbb Z$ are respectively defined in \eqref{def-psi-1} and  \eqref{def-psi-2} and in Table \ref{tab-3};
\begin{align*} \tag{2}
\sum_{k=0}^m \binom{m-k}{\lfloor (m-k)/2 \rfloor}
\sum_{1 \le i_1 < \cdots <i_k \le m} \prod_{j=1}^k (x_{i_j}^2+x_{i_j}^{-2})
&=\sum_{\ell=0}^{\lfloor \frac m 2 \rfloor} \sum_{z=0}^{\lfloor \frac m 2 \rfloor-\ell}  (-1)^z \chi_{(2^{m-2\ell-2z}, 1^{2z})}^{\Sp(2m)},
\end{align*}
where the summation over $i_1< \dots < i_k$ is set to be equal to $1$ when $k=0$;
\begin{align*} \tag{3}
\prod_{i=1}^m (x_i+x_i^{-1})  \sum_{j=0}^{\lfloor \frac m 2 \rfloor} \chi_{(1^{m-2j})}^{\Sp(2m)}
&=\sum_{b=0}^m \sum_{z=0}^{\lfloor \frac {m-b} 2 \rfloor} \xi_2(z,b) \chi_{(2^{m-b-2z}, 1^{2z})}^{\Sp(2m)},
\end{align*}
where $\xi_2(z,b) \in \mathbb Z$ is defined in Table \ref{tab-4}.
\end{theorem}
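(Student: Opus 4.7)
The plan is to derive each identity from Theorem~\ref{thm-genus-2-1} by choosing a particular Sato--Tate group $H \le \USp(4)$ for which the integral $\int_H \prod_{j=1}^m \det(I+x_j\gamma)\,d\gamma$ on the left-hand side of \eqref{eq-sp-1} can be evaluated directly in closed form, and then comparing with the character expansion on the right-hand side. The mechanism advertised in the paragraph preceding the theorem is a coset decomposition: since most Sato--Tate groups are disconnected, the integral splits as an average of integrals over cosets of the identity component, and on suitably chosen cosets the characteristic polynomial $\prod_j\det(I+x_j\gamma)$ is independent of $\gamma$ inside the coset. When this happens the coset contribution factors out as an elementary polynomial in the $x_j$'s, and the sum of such contributions is precisely the left-hand side of the identity.

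For identity~(1), for each $n\in\{1,2,3,4,6\}$ I would pick a Sato--Tate group $H_n$ whose non-identity cosets are represented by an element $\gamma_n\in\USp(4)$ with eigenvalues $\{\alpha_n,\alpha_n^{-1},-\alpha_n,-\alpha_n^{-1}\}$ satisfying $\alpha_n^2+\alpha_n^{-2}=-\kappa_n$. A short eigenvalue computation then gives
\[
\det(I+x\gamma)=(1-x^2\alpha_n^2)(1-x^2\alpha_n^{-2})=1-x^2(\alpha_n^2+\alpha_n^{-2})+x^4 = x^2\bigl(x^2+\kappa_n+x^{-2}\bigr)
\]
for every $\gamma$ on that coset, so the factor $(x_1\cdots x_m)^2\prod_i(x_i^2+\kappa_n+x_i^{-2})$ arises as the constant contribution of the coset. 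Summing over all cosets (and using Theorem~\ref{thm-genus-2-1} applied to smaller subgroups for the identity component) produces exactly the right-hand side of identity~(1), with $\psi_n(z,b)$ emerging as the appropriate signed combination of the branching multiplicities $\mathfrak m_{(b+2z,b)}$ recorded in Table~\ref{tab-1}. Identities~(2) and~(3) are obtained by the same paradigm with different Sato--Tate groups: identity~(3) involves the top-exterior-power-style character $\sum_j\chi_{(1^{m-2j})}^{\Sp(2m)}$, which signals the appearance of the Jimbo--Miwa--Howe identity~\eqref{eqn-111} in the coset-averaged integrand, while the alternating signs $(-1)^z$ and central binomials in~(2) are typical of a coset average of $N(\U(1))$-flavour, where the binomial coefficients arise from counting eigenvalue configurations on the discrete coset.

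The main obstacle, in my view, is the identification and verification of the \emph{correct} Sato--Tate group in each subcase. The classification in \cite{FKRS} contains 57 groups, and one must check, case by case, that the selected $H$ has the claimed coset structure, that $\prod_j\det(I+x_j\gamma)$ is genuinely constant on each relevant coset (rather than merely invariant on average), and that the resulting assembly of Theorem~\ref{thm-genus-2-1}'s branching multiplicities unfolds into the explicit coefficients $\psi_n(z,b)$ and $\xi_2(z,b)$ defined in \eqref{def-psi-1}, \eqref{def-psi-2} and Tables~\ref{tab-3} and~\ref{tab-4}. Once the groups are pinned down, the remaining work is a coefficient comparison; but settling the choice of group in each of the seven subcases, together with a careful audit of the coset-level eigenvalue data, is the substantive step.
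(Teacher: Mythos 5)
Your strategy is the paper's strategy for identity (1), and there you have essentially reconstructed the argument: the relevant group is $J(C_n)=C_n\sqcup JC_n$, every element of the coset $J\pmb{\zeta}_{2n}C_1$ has the fixed eigenvalue multiset $\{\pm e^{\pi i/n},\pm e^{-\pi i/n}\}$ (so your ansatz holds with $\alpha_n=e^{\pi i/n}$), the coset integral is the constant $(x_1\cdots x_m)^2\prod_i(x_i^2+\kappa_n+x_i^{-2})$, and comparing $\int_{J(C_n)}$ with $\int_{C_n}$ --- both known from Table~\ref{tab-1} via the separate branching computations for $J$-fixed vectors --- yields $\psi_n=2\theta_n-\eta_n$. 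One step you leave implicit: $JC_n$ is a union of several $C_1$-cosets, so what falls out directly is the mixture $\widetilde\psi_n$ (e.g.\ $\tfrac{1}{2}\psi_1+\tfrac{1}{2}\psi_2$ for $n=2$), and the individual $\psi_n$ must be isolated by solving the resulting triangular system in the order $n=1,2,3,4,6$.

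The genuine gap is in identities (2) and (3), where your guiding assumption --- that the characteristic polynomial is constant on the chosen coset --- fails. For (2) the paper uses $J(E_n)=E_n\sqcup JE_n$, and on $JE_n$ one has $\det(I+xJ\gamma)=1+(2-|\tr A|^2)x^2+x^4$ for $\gamma$ corresponding to $A\in\SU(2)$: this depends on $A$ through $|\tr A|$, and the central binomial coefficients on the left-hand side of (2) are produced only after integrating $(2-|\tr A|^2)^{m-k}$ over $\SU(2)$ using the Catalan moments $\int_{\SU(2)}|\tr A|^{2k}\,dA=\mathscr C_k$ together with the inverse binomial transform $\binom{m}{\lfloor m/2\rfloor}=\sum_k(-1)^k\binom{m}{k}2^{m-k}\mathscr C_k$. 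Your attribution of these binomials to ``counting eigenvalue configurations on the discrete coset'' of an $N(\U(1))$-flavoured group points at the wrong mechanism and the wrong group. Similarly, for (3) the group is $F_{\mathtt a}=F\sqcup\mathtt aF$, and on $\mathtt aF$ one has $\det(I+x\gamma)=(1+x^2)\bigl(1+(u+u^{-1})x+x^2\bigr)$ with $u\in\U(1)$ varying over the coset; the factor $\sum_j\chi^{\Sp(2m)}_{(1^{m-2j})}$ on the left-hand side is exactly the genus-one $\U(1)$ auto-correlation of Theorem~\ref{thm-genus-1}, not an appearance of \eqref{eqn-111}. In both cases the coset contribution is an honest integral requiring this extra input rather than a constant that factors out, so the ``coefficient comparison'' you defer cannot be set up until these evaluations are supplied.
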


Notice that the irreducible characters $\chi_\lambda^{\Sp(2m)}$ are symmetric functions with the number of terms growing very fast as $m$ increases,
but that the coefficients (e.g. $\psi_n(z,b)$) are independent of $m$.
These identities seem intriguing from the viewpoint of representation theory and algebraic combinatorics.
Without the motivation coming from the Sato--Tate distributions which led to the computations in this paper,
it might have been difficult for us to expect that such identities exist.


\subsection{Organization of the paper}

In Section \ref{sec-stg}, we present backgrounds for Sato--Tate groups. In Section
\ref{sec-dci},
the dual Cauchy identity for symplectic groups will be used to convert computation of auto-correlation functions into that of branching rules of $\USp(4)$.
Section \ref{sec-g-1} is devoted to
the genus one case. This case will demonstrate basic ideas which apply to the genus two case. In Section \ref{sec-g-2}, the main theorems are stated and proved by considering each of the Sato--Tate groups using the results of Section \ref{sec-branching}, where we study branching rules for Sato--Tate groups through crystals and other methods.

\begin{convention}
Throughout this paper, we keep the following conventions.
\begin{enumerate}
\item[{\rm (i)}] For a statement $P$, $\delta(P)$ is equal to $1$ or $0$ according to whether $P$ is true or not.
\item[{\rm (ii)}] For a partition $\lambda$, we denote by $\lambda'=(\lambda'_1 \ge \ldots \ge\lambda'_g \ge 0)$  the transpose of $\lambda$.
\item[{\rm (iii)}]  The term for $k=0$ in a summation over $1 \le i_1< \dots < i_{k}\le m$ is set to be equal to $1$.
\item[{\rm (iv)}]  For $m,m' \in \Z$, we write $m\equiv_k m'$ if $k$ divides $m-m'$, and $m \not\equiv_k m'$ otherwise.
\end{enumerate}
\end{convention}

\subsection*{Acknowledgments} We are very grateful to Daniel Bump for his help and guidance for this project, and thank Seung Jin Lee for useful discussions. We also thank Andrew Sutherland for his helpful comments.

\section{Sato--Tate groups} \label{sec-stg}

In this section, we briefly overview backgrounds of Sato--Tate groups and their auto-correlation functions of characteristic polynomials. More details can be found in \cite{KS09,FKRS}.

\medskip
Let $C$ be a smooth, projective, geometrically irreducible algebraic curve of genus $g$ defined over $\mathbb Q$. 
For each prime $p$ where $C$ has good reduction,
we define the zeta
function $Z(C/\mathbb F_p; T)$ by
\[ Z(C/\mathbb F_p; T) = \exp \left ( \sum_{k=1}^\infty N_kT^k/k \right ) , \]
where $N_k$ is the number of the points on $C$ over $\mathbb F_{p^k}$. It is well-known \cite{Art} that $Z(C /\mathbb F_p;T)$ is a rational function of the form
\[ Z(C/\mathbb F_p; T) = \frac{L_p(T)}{(1-T)(1-pT)} , \] where $L_p \in \mathbb Z[T]$ is a  polynomial of degree $2g$ with constant term $1$. For example, when $C$ is an elliptic curve, i.e. when $g=1$, we have $L_p(T)=1-a_pT+pT^2$ and $L_p(1)$ is equal to the number of points on $C$ over $\mathbb F_p$.

Set $\bar L_p(T):=L_p(p^{-1/2}T)$ and write
\[ \bar{L}_p(T)=T^{2g}+a_{1,p}T^{2g-1}+ a_{2,p}T^{2g-2}+ \cdots + a_{2,p} T^2+a_{1,p}T+1. \]
Let $P_C(N)$ be the set of primes $p \le N$ for which the curve $C$ has good reduction.
\begin{definition}
For $1\le k \le g$ and $m\ge 0$, define $a_k(m;g)$ to be the average value of $a_{k,p}^m$ over $p \in P_C(N)$ as $N \rightarrow \infty$.
\end{definition}
The values $a_k(m;g)$, $m \ge 0$, are  the $m^{\mathrm{th}}$ moments of the distribution of $a_{k,p}$, and we are interested in how to describe $a_k(m;g)$.
The generalized Sato--Tate conjecture expects  that curves of fixed genus $g$ are classified into certain families and that  $a_k(m;g)$ are all the same for curves in each family. In particular, there is a generic family of curves for each genus $g$.

Let $\ell$ be a prime and $T_\ell(C)$ be the {\em Tate module}, i.e., the inverse limit of the $\ell^n$-torsion subgroups ($n \in \mathbb Z_{\ge 1}$) of the Jacobian $J(C)$ of $C$. Then we obtain the representation
\[ \rho_\ell : \mathrm{Gal}(\overline {\mathbb Q}/\mathbb Q) \rightarrow \mathrm{Aut}(T_\ell(C)) \cong \GL(2g, \mathbb Z_\ell) .\]
We say that the curve $C$ has {\em large Galois image} if the image of $\rho_\ell$ is Zariski dense in $\GSp(2g,\mathbb Z_\ell) \subset \GL(2g,\mathbb Z_\ell)$ for any $\ell$. The curves with large Galois image form the generic family of curves with a fixed genus $g$.

\begin{example} \label{ex-ST}
The following curves are from the generic families (\cite{KS09,FKRS}).
\begin{itemize}
 \item $g=1$, \quad $C$: $y^2=x^3+x+1$
\[a_1(m;1):\ 1,0,1,0,2,0,5,0,14,0,42,0,132, \dots  \text{(Catalan numbers)} \]
 \item $g=2$, \quad $C$: $y^2=x^5-x+1$
\begin{align*}
 a_1(m;2) :&\quad  1,0,1,0,3,0,14,0,84,0,594,0,4719, \dots \\
a_2(m;2):&\quad  1,1,2,4,10,27,82,268,940, \dots
\end{align*}
 \item $g=3$, \quad $C$: $y^2=x^7-x+1$
\begin{align*}
a_1(m;3): &\quad  1,0,1,0,3,0,15,0,104,0,909,0,9449, \dots \\
a_2(m;3): &\quad  1,1,2,5,16,62,282,1459,8375, \dots \\
a_3(m;3): & \quad 1,0,2,0,23,0, 684,0, 34760, \dots
\end{align*}
\end{itemize}
\end{example}

The generalized Sato--Tate conjecture predicts that these distributions are actually the same as the distributions of eigenvalues of random matrices. To be precise, let us consider  the group $\USp(2g)$ with the Haar probability measure.  Let \[ \det (I - x\gamma)=x^{2g}+c_{1}x^{2g-1}+ c_{2}x^{2g-2}+ \cdots + c_{2} x^2+c_{1}x+1\] be the characteristic polynomial of a random matrix $\gamma$ of $\USp(2g)$.
\begin{definition}
For each $k=1,2, \dots, g$, let $X_k$ be the random variable corresponding to the coefficient $c_k$ and define $c_k(m;g)$ to be the $m^{\rm{th}}$ moment $\mathbf E[X_k^m]$, $m \in \mathbb Z_{\ge 0}$, of the random variable $X_k$.
\end{definition}
The following is the generalized Sato--Tate conjecture for the case that $C$ is in the generic family.

\begin{conjecture}[\cite{KS}] \label{conj-ST}
Let $C$ be a smooth projective curve of genus $g$.
Assume that $C$ is in the generic family.
 Then, for each $k=1, 2, \dots, g$ and $m \ge 0$, we have
 \[ a_k(m;g)= c_k(m;g). \]
\end{conjecture}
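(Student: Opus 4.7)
This is the generalized Sato--Tate conjecture in its moment-theoretic formulation, and the plan follows the by-now standard route that was successfully carried out for $g=1$ by Clozel--Harris--Shepherd-Barron--Taylor. The strategy is to recast the equality of moments as an equidistribution statement on $\USp(2g)$, and then to derive that equidistribution from analytic properties of a family of $L$-functions built from symplectic characters.

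First, I would replace moments of the individual coefficients $a_{k,p}$ by averages of irreducible characters. Since $a_{k,p}$ is, up to a sign, the $k$-th elementary symmetric polynomial in the eigenvalues of $\overline{L}_p$, and since every irreducible character $\chi_\lambda^{\Sp(2g)}$ is a polynomial in these elementary symmetric functions (this is precisely the translation underlying \eqref{eq-ssspp} and Theorem \ref{thm-genus-2-1}), the conjecture is equivalent to showing that for every nonzero dominant weight $\lambda$ of $\Sp(2g)$,
\[
\lim_{N \to \infty} \frac{1}{|P_C(N)|}\sum_{p \in P_C(N)} \chi_\lambda^{\Sp(2g)}(\overline{L}_p) \;=\; \int_{\USp(2g)} \chi_\lambda^{\Sp(2g)}(\gamma)\, d\gamma \;=\; 0,
\]
the last equality being Schur orthogonality; the case $\lambda=0$ gives the tautology $1=1$.

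Second, Weyl's equidistribution criterion together with a Wiener--Ikehara style Tauberian argument reduces this vanishing to the assertion that each Dirichlet series
\[
L(s,\chi_\lambda) \;=\; \prod_{p \in P_C} \det\bigl( 1 - \chi_\lambda^{\Sp(2g)}(\overline{L}_p)\, p^{-s}\bigr)^{-1}
\]
extends to a holomorphic, non-vanishing function on the closed half-plane $\Re(s)\ge 1$, with a pole at $s=1$ only when $\lambda=0$. By the Langlands philosophy, this analytic input is to be supplied by automorphy: one must identify $\chi_\lambda^{\Sp(2g)}\circ \rho_\ell$ with the $\ell$-adic realization of a cuspidal automorphic representation, or at least establish potential automorphy after a suitable soluble base change. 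The standard tool kit is Taylor--Wiles--Kisin style automorphy lifting applied to appropriate tensor constructions on the Tate module, combined with Brauer's induction theorem to patch automorphy of individual constituents into analytic properties of the full $L(s,\chi_\lambda)$.

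The hard part is unambiguously the third step. For $g=1$ the required potential automorphy of symmetric powers $\operatorname{Sym}^n \rho_\ell$ is precisely what \cite{CHT,T,HSBT} supplied, while for $g \ge 2$ the analogous automorphy of $\chi_\lambda^{\Sp(2g)}\circ \rho_\ell$ remains open beyond low-rank cases such as Johansson--Taylor's work on abelian surfaces. The first two steps are, by contrast, essentially formal once one grants the character-theoretic dictionary developed in this paper: the dual Cauchy identity \eqref{eqn-111} and the explicit auto-correlation formulas of Theorem \ref{thm-genus-2-1} already encode the passage from moments of $a_{k,p}$ to averages of symplectic characters, so one needs only equidistribution of the normalized Frobenius conjugacy classes to conclude.
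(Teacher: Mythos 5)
The statement you are asked to prove is stated in the paper as a \emph{conjecture} (attributed to Katz--Sarnak), and the paper supplies no proof of it: the authors only verify consistency for a handful of explicit curves by matching the numerically computed moment sequences $a_k(m;g)$ of Example \ref{ex-ST} against the coefficients $c_k(m;g)$ extracted from \eqref{eqn-done-1}. Your proposal is therefore not being measured against an argument in the paper; it has to stand on its own, and it does not. What you have written is the standard Serre--Langlands reduction: (i) moments of the $a_{k,p}$ are finite $\mathbb Z$-linear combinations of averages of irreducible characters $\chi_\lambda^{\Sp(2g)}$ evaluated at the unitarized Frobenius classes, so the conjecture is equivalent to the vanishing of those averages for all $\lambda\neq 0$; (ii) Weyl's criterion plus a Tauberian argument reduces this to holomorphy and nonvanishing on $\Re(s)\ge 1$ of the $L$-functions attached to $\lambda\circ\rho_\ell$; (iii) that analytic input is to come from (potential) automorphy. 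Steps (i) and (ii) are correct in outline, but step (iii) is precisely the open problem, and you say so yourself: for $g\ge 2$ the required automorphy of $\lambda\circ\rho_\ell$ for general dominant $\lambda$ is not known (Johansson and Taylor handle only non-generic Sato--Tate groups over $\mathbb Q$, explicitly excluding the generic case $\USp(4)$ that this conjecture concerns). A proof that openly defers its decisive step to an unproved hypothesis is a conditional reduction, not a proof.

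Two smaller points. First, your Euler product $\prod_p \det\bigl(1-\chi_\lambda^{\Sp(2g)}(\overline L_p)\,p^{-s}\bigr)^{-1}$ applies $\det$ to a scalar; the object you need is $\prod_p \det\bigl(1-\lambda(\theta_p)\,p^{-s}\bigr)^{-1}$ with $\lambda$ the representation itself, so that the coefficient of $p^{-s}$ in $-\log$ of the local factor is the character value $\chi_\lambda(\theta_p)$ feeding into Weyl's criterion. Second, the reduction in step (i) should be justified by noting that the $\chi_\lambda^{\Sp(2g)}$ form a $\mathbb Z$-basis of the ring of Weyl-invariant Laurent polynomials containing the elementary symmetric functions $c_k$, which is implicit in \eqref{eq-sp} but worth stating. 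Neither repair touches the fundamental gap: the conjecture remains open, and your proposal does not close it.
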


Therefore it is important to compute the distribution of the characteristic polynomials of random matrices in $\USp(2g)$. Ultimately, the auto-correlation functions
\begin{equation} \label{eqn-Usp}  \int_{\USp(2g)}  \left ( \prod_{j=1}^m \det (I+ x_j \gamma) \right ) d \gamma \end{equation}
for $m \in \mathbb Z_{\ge 0}$ will describe the distribution completely.  The number $c_k(m;g)$ will appear as the coefficient of $(x_1 \cdots x_m)^k$ in \eqref{eqn-Usp}.

As mentioned in the previous section, Conrey--Farmer--Keating--Rubinstein--Snaith \cite{CFKRS} and
Bump--Gamburd \cite{BG} obtained
\begin{equation}  \label{eqn-done-1} \int_{\USp(2g)}  \left ( \prod_{j=1}^m \det (I+ x_j \gamma) \right ) d \gamma = (x_1 \dots x_m)^g \, \chi^{\Sp(2m)}_{(g^m)} (x_1^{\pm 1}, \dots , x_m^{\pm 1}), \end{equation}
where $\chi^{\Sp(2m)}_{(g^m)}$ is the irreducible character of $\Sp(2m)$ associated with the rectangular partition $(g^m)$.
By computing the coefficients of $(x_1 \cdots x_m)^k$ in the right-hand side of \eqref{eqn-done-1}, which are nothing but weight multiplicities, one can check \[ a_k(m;g)= c_k(m;g) \] for the sequences $a_k(m;g)$ in Example \ref{ex-ST}. Thus we see  validity of the generalized Sato--Tate conjecture for the curves in the example.

Aside from the generic family of curves whose distribution is (expected to be) given by $\USp(2g)$, there are exceptional families of curves. For elliptic curves, there are two exceptional families (only one over $\mathbb Q$) which consist of elliptic curves with complex multiplication. The Sato--Tate conjecture, which is proven much earlier  for these exceptional families of $g=1$ \cite{Deu}, tells us that the moment sequences $a_1(m;1)$ are the same as those of $N(\U(1))$ and $\U(1)$, respectively. Here $N(\U(1))$ is the normalizer of $\U(1)$ in $\SU(2) \cong  \USp(2)$.

For genus $2$ curves, there are a lot more of exceptional families. Kedlaya and Sutherland \cite{KS09} and later with Fit{\' e} and  Rotger \cite{FKRS} made a conjectural, exhaustive list of 55 compact subgroups of $\USp(4)$ that would classify all the distributions of Euler factors for abelian surfaces, and called the groups {\em Sato--Tate groups}. Later, when they considered certain motives of weight $3$ \cite{FKS}, two other groups were added to the list of Sato--Tate groups that are subgroups of $\USp(4)$. They determined the moment sequences $c_k(m;2)$, $k=1,2$, for each Sato--Tate group by expressing them as combinations of some sequences. In the process they investigated a huge number of abelian surfaces to see that Euler factors have the same distributions as the Sato--Tate distributions, supporting their refined, generalized Sato--Tate conjecture for abelian surfaces.

\section{Dual Cauchy identity} \label{sec-dci}

In this section, we use the dual Cauchy identity for symplectic groups to convert computation of auto-correlation functions into that of branching rules.

\medskip

We recall the dual Cauchy identity for symplectic groups:

\begin{proposition}[ \cite{JM, Howe}]
For $m, g \in \mathbb Z_{\ge 1}$, we have
\begin{equation} \label{eqn-1} \prod_{i=1}^m \prod_{j=1}^g (x_i +x_i^{-1}+t_j+t_j^{-1}) = \sum_{\lambda \trianglelefteq (g^m)} \chi_\lambda^{\Sp(2m)}(x_1^{\pm 1}, \dots , x_m^{\pm 1})
\chi_{\tilde \lambda}^{\Sp(2g)}(t_1^{\pm 1}, \dots , t_g^{\pm 1}) ,\end{equation}
where we set
$$\tilde \lambda \seteq (m-\lambda'_g, \dots , m-\lambda'_1)$$  with $\lambda' =(\lambda'_1, \dots , \lambda'_g)$ the transpose of $\lambda$.
\end{proposition}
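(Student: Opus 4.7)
The plan is to prove the identity by a determinantal calculation that parallels the classical derivation of the dual Cauchy identity for Schur functions, with the symplectic nature encoded via a sign twist in the variables. First, by the Weyl character formula,
\[
\chi_\mu^{\Sp(2n)}(y^{\pm 1}) = \frac{A_\mu^{(n)}(y)}{A_\emptyset^{(n)}(y)}, \qquad A_\mu^{(n)}(y) := \det\bigl(y_i^{\mu_j+n-j+1} - y_i^{-(\mu_j+n-j+1)}\bigr)_{i,j=1}^n.
\]
Clearing denominators on both sides of \eqref{eqn-1} reduces the claim to the polynomial identity
\[
A_\emptyset^{(m)}(x)\, A_\emptyset^{(g)}(t) \prod_{i=1}^m\prod_{j=1}^g (x_i + x_i^{-1} + t_j + t_j^{-1}) = \sum_{\lambda \trianglelefteq (g^m)} A_\lambda^{(m)}(x)\, A_{\tilde\lambda}^{(g)}(t).
\]

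Next I would exhibit both sides as two evaluations of the single $(m+g) \times (m+g)$ determinant $\det M$, where $M_{r,s} = w_r^s - w_r^{-s}$ with $w_r = x_r$ for $r \le m$ and $w_{m+j} = -t_j$ for $1 \le j \le g$. The sign twist is essential: setting $v_r := w_r + w_r^{-1}$, one has $v_{m+j} - v_i = -(x_i + x_i^{-1} + t_j + t_j^{-1})$, so the cross factors of the $v$-Vandermonde recover the product on the LHS. Using the factorization $z^s - z^{-s} = (z - z^{-1})\, q_{s-1}(z+z^{-1})$ with $q_{s-1}$ a monic polynomial of degree $s-1$, and standard Vandermonde reduction in the $v_r$, one finds $\det M = \pm \prod_r (w_r - w_r^{-1}) \prod_{r<r'}(v_{r'} - v_r)$; splitting this product into the $x/x$, $t/t$, and mixed $x/t$ blocks reproduces $A_\emptyset^{(m)}(x) A_\emptyset^{(g)}(t) \prod_{i,j}(x_i + x_i^{-1} + t_j + t_j^{-1})$ up to an overall sign.

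For the second evaluation, I would apply Laplace expansion of $\det M$ along the first $m$ rows. Size-$m$ subsets $S = \{s_1 < \cdots < s_m\}$ of $\{1,\dots,m+g\}$ are in bijection with partitions $\lambda \trianglelefteq (g^m)$ via $s_{m-j+1} = \lambda_j + m - j + 1$, and the complementary increasing sequence $s'_j = m + j - \lambda'_j$ is precisely the sequence of shifted exponents for $\tilde\lambda$. This bijection is the conceptual reason that the particular shifted transpose $\tilde\lambda = (m - \lambda'_g, \dots, m - \lambda'_1)$ must appear in the formula. After column reversal (to align increasing $s_j$ with the decreasing Weyl exponents), each Laplace term contributes $\pm A_\lambda^{(m)}(x) A_{\tilde\lambda}^{(g)}(t)$, with the sign depending only on $m$ and $g$, so this recovers the RHS up to a global sign.

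The main obstacle will be the sign bookkeeping. Contributions come from the Laplace sign $\epsilon_S = (-1)^{\sum_j s_j - m(m+1)/2}$, the column reversals relating $\det(y_i^{s_j} - y_i^{-s_j})$ with $s_j$ increasing to $A_\lambda^{(n)}(y)$, and the factors $(-1)^s$ introduced into the bottom block by the twist $w_{m+j} = -t_j$. A careful parity count shows that all these contributions collect into a power of $(-1)$ whose exponent is a sum of terms of the form $m(m-1)$ and $g(g-1)$, each always even; hence the two evaluations match on the nose, and the proposition follows upon dividing through by $A_\emptyset^{(m)}(x) A_\emptyset^{(g)}(t)$.
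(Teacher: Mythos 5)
The paper itself offers no proof of this proposition: it is quoted from Jimbo--Miwa and Howe (and it is the same identity that Bump--Gamburd use as their starting point), so any comparison is between your argument and the literature rather than with an in-paper derivation. Your determinantal proof is correct and self-contained. The two pillars both check out: (i) the factorization $z^{s}-z^{-s}=(z-z^{-1})q_{s-1}(z+z^{-1})$ with $q_{s-1}$ monic of degree $s-1$ reduces $\det M$ to $\prod_r(w_r-w_r^{-1})$ times a Vandermonde in the $v_r$, and the twist $w_{m+j}=-t_j$ turns the cross factors into $\pm(x_i+x_i^{-1}+t_j+t_j^{-1})$ while the diagonal blocks reassemble into the type-$\mathtt C$ Weyl denominators; (ii) the indexing of $m$-subsets $S\subseteq\{1,\dots,m+g\}$ by $s_{m-j+1}=\lambda_j+m-j+1$ with complement $\{m+k-\lambda'_k\}$ is exactly the set of shifted exponents of $\tilde\lambda=(m-\lambda'_g,\dots,m-\lambda'_1)$, which is indeed the conceptual source of the shifted transpose. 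The one place where your write-up is imprecise is the final sign claim: the residual exponent is not a sum of always-even quantities. What actually happens is that the Laplace sign $(-1)^{\sum_j s_j-\binom{m+1}{2}}$ and the twist sign $(-1)^{\sum_k s'_k}$ combine, via $\sum_j s_j+\sum_k s'_k=\binom{m+g+1}{2}$, into an $S$-independent constant; together with the two column reversals this gives the total sign $(-1)^{g+mg+\binom{m}{2}}$ on the Laplace side, and the block-by-block count on the product side ($(-1)^{g}$ from $\prod_j(w_{m+j}-w_{m+j}^{-1})$, $(-1)^{mg}$ from the cross factors, $(-1)^{\binom{g}{2}}$ from the $t$-block Vandermonde, against $(-1)^{\binom{m}{2}}$ and $(-1)^{\binom{g}{2}}$ from rewriting the denominators) yields exactly the same $(-1)^{g+mg+\binom{m}{2}}$. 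The signs therefore cancel in the ratio even though neither is $+1$ in general; your conclusion stands, but the justification should be stated as this cancellation rather than as evenness of each contribution.
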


Let $H$ be a compact  subgroup of $\USp(2g)$. For each $m \in \mathbb Z_{\ge 1}$, we want to compute the auto-correlation of distribution of characteristic polynomials of $H$:
\[ \int_{H} \prod_{i=1}^m \det (I+ x_i \gamma)  d \gamma .\]

Fix $g$ for the time being. For $\gamma \in \USp(2g)$ with eigenvalues $t_1^{\pm 1}, \dots , t_g^{\pm 1}$,   
we have
\[ \prod_{i=1}^m \det(I+x_i \gamma) = \prod_{i=1}^m \prod_{j=1}^g (1+x_i t_j)(1+x_i t_j^{-1}) .\]
We also write $\chi_{\tilde \lambda}^{\Sp(2g)} (\gamma) =\chi_{\tilde \lambda}^{\Sp(2g)}(t_1^{\pm 1}, \dots , t_g^{\pm 1})$ for simplicity of notations.

\begin{definition} \label{def-m-lambda}
Let $\mathfrak m_{\tilde \lambda}(H)$ denote the multiplicity of the trivial representation $1_H$ of $H$ in the restriction of $\chi_{\tilde \lambda}^{\Sp(2g)}$ to $H$.  That is,
\[ \chi_{\tilde \lambda}^{\Sp(2g)} |_H = \mathfrak m_{\tilde \lambda}(H)\, 1_H + \text{sum of nontrivial irreducible characters of $H$} .\]
\end{definition}

The following proposition shows that the decomposition multiplicities $\mathfrak m_{\tilde \lambda}(H)$ are the coefficients of character expansion of auto-correlation functions.

\begin{proposition} \label{prop-main}
For each $m \in \mathbb Z_{\ge 1}$, the auto-correlation function of the distribution of characteristic polynomials of $H$ is given by
\[\int_{H} \prod_{i=1}^m \det (I+ x_i \gamma)  \, d \gamma = (x_1 \cdots x_m)^{g} \sum_{\lambda \trianglelefteq (g^m)} \mathfrak m_{\tilde \lambda}(H)\, \chi_\lambda^{\Sp(2m)}(x_1^{\pm 1}, \dots , x_m^{\pm 1}). \]
\end{proposition}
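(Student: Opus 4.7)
The plan is to start from the eigenvalue description of the integrand and reduce to the dual Cauchy identity \eqref{eqn-1}, whose right-hand side depends on $\gamma$ only through the characters $\chi^{\Sp(2g)}_{\tilde\lambda}(\gamma)$. Integrating those characters over $H$ then produces $\mathfrak m_{\tilde \lambda}(H)$ by definition.

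First I would rewrite the integrand. For $\gamma \in \USp(2g)$ with eigenvalues $t_1^{\pm 1},\dots,t_g^{\pm 1}$, each factor satisfies
\[
(1+x_i t_j)(1+x_i t_j^{-1}) = x_i\bigl(x_i+x_i^{-1}+t_j+t_j^{-1}\bigr),
\]
so that
\[
\prod_{i=1}^m \det(I+x_i\gamma) = (x_1\cdots x_m)^{g}\prod_{i=1}^m\prod_{j=1}^g\bigl(x_i+x_i^{-1}+t_j+t_j^{-1}\bigr).
\]
Applying the dual Cauchy identity \eqref{eqn-1} to the right-hand side yields
\[
\prod_{i=1}^m \det(I+x_i\gamma) = (x_1\cdots x_m)^{g}\sum_{\lambda\trianglelefteq (g^m)} \chi_\lambda^{\Sp(2m)}(x_1^{\pm 1},\dots,x_m^{\pm 1})\,\chi_{\tilde\lambda}^{\Sp(2g)}(\gamma),
\]
which is a pointwise identity in $\gamma$.

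Next I would integrate over $H$ against the Haar probability measure. Since the factor $(x_1\cdots x_m)^g \chi_\lambda^{\Sp(2m)}(x_1^{\pm 1},\dots,x_m^{\pm 1})$ does not depend on $\gamma$, it can be pulled out of the integral, leaving
\[
\int_{H}\prod_{i=1}^m \det(I+x_i\gamma)\,d\gamma = (x_1\cdots x_m)^{g}\sum_{\lambda\trianglelefteq (g^m)} \chi_\lambda^{\Sp(2m)}(x_1^{\pm 1},\dots,x_m^{\pm 1})\int_H \chi_{\tilde\lambda}^{\Sp(2g)}(\gamma)\,d\gamma.
\]
By orthogonality of characters on the compact group $H$, integrating the restricted character $\chi_{\tilde\lambda}^{\Sp(2g)}\big|_H$ against the trivial character $1_H$ picks out the multiplicity of the trivial representation in the restriction, which is exactly $\mathfrak m_{\tilde\lambda}(H)$ in Definition \ref{def-m-lambda}. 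Substituting this value gives the desired formula.

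There is no real obstacle here beyond bookkeeping: the identity $(1+x_it_j)(1+x_it_j^{-1})=x_i(x_i+x_i^{-1}+t_j+t_j^{-1})$ is the algebraic step that makes the dual Cauchy identity applicable, and the only analytic input is Peter--Weyl/character orthogonality for the compact subgroup $H$, which is standard. The genuine work of the paper lies elsewhere, namely in computing the multiplicities $\mathfrak m_{\tilde\lambda}(H)$ for each Sato--Tate group $H\le\USp(4)$ via branching rules; the proposition above simply packages the problem into that form.
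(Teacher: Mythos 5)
Your proposal is correct and follows essentially the same route as the paper's own proof: the factorization $(1+x_it_j)(1+x_it_j^{-1})=x_i(x_i+x_i^{-1}+t_j+t_j^{-1})$, the pointwise application of the dual Cauchy identity, term-by-term integration over $H$, and Schur orthogonality to identify $\int_H \chi_{\tilde\lambda}^{\Sp(2g)}(\gamma)\,d\gamma$ with the multiplicity $\mathfrak m_{\tilde\lambda}(H)$. No gaps.
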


\begin{proof}
Since we have
\[ (1+x_i t_j)(1+x_i t_j^{-1})=x_i (x_i +x_i^{-1}+t_j+t_j^{-1}),\]
it follows from the dual Cauchy identity \eqref{eqn-1} that
\begin{align*} 
&  (x_1 \cdots x_m)^{-g} \int_{H}  \prod_{j=1}^m \det (I+ x_j \gamma)  d
\gamma = (x_1 \cdots x_m)^{-g} \int_{H}  \prod_{i=1}^m \prod_{j=1}^g (1+x_i t_j)(1+x_i t_j^{-1})  \allowdisplaybreaks \\
&= \int_{H} \prod_{i=1}^m \prod_{j=1}^g (x_i +x_i^{-1}+t_j+t_j^{-1}) d\gamma = \int_{H} \sum_{\lambda \trianglelefteq (g^m)} \chi_\lambda^{\Sp(2m)}(x_1^{\pm 1}, \dots , x_m^{\pm 1})
\chi_{\tilde \lambda}^{\Sp(2g)}(\gamma)  d\gamma  \nonumber \allowdisplaybreaks \\
&=  \sum_{\lambda \trianglelefteq (g^m)}  \chi_\lambda^{\Sp(2m)}(x_1^{\pm 1}, \dots , x_m^{\pm 1}) \int_{H}
\chi_{\tilde \lambda}^{\Sp(2g)}(\gamma)  d\gamma .   \nonumber
\end{align*}
From Schur orthogonality (for example, \cite{Bu}), the integral $\int_{H}
\chi_{\tilde \lambda}^{\Sp(2g)}(\gamma)  d\gamma$
is equal to the multiplicity of the trivial representation $1_H$ of $H$ in the restriction of $\chi_{\tilde \lambda}^{\Sp(2g)}$ to $H$, which is  $\mathfrak m_{\tilde \lambda}(H)$ by definition.
\end{proof}

When $H$ is clear from the context, we will simply write $\mathfrak m_{\tilde \lambda}$ for $\mathfrak m_{\tilde \lambda}(H)$. For simplicity, we also write
\[ \chi^{\Sp(2m)}_{\lambda} =\chi^{\Sp(2m)}_{\lambda} (x_1^{\pm 1}, \dots , x_m^{\pm 1}) .\]

As a special case, we obtain the identity \eqref{eq-ssspp} where $H$ is equal to the generic Sato--Tate group $\USp(2g)$.
\begin{corollary}[\cite{BG}]\label{cor-generic}  When $H=\USp(2g)$, we obtain
\begin{equation*}  
\int_{\USp(2g)}  \prod_{j=1}^m \det (I+ x_j \gamma) \, d \gamma = (x_1 \dots x_m)^g \, \chi^{\Sp(2m)}_{(g^m)} (x_1^{\pm 1}, \dots , x_m^{\pm 1}).
\end{equation*}
\end{corollary}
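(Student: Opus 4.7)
The plan is to derive this as an immediate specialization of Proposition \ref{prop-main} with $H = \USp(2g)$, so essentially everything is already in place. First I would invoke Proposition \ref{prop-main} to write
\[
\int_{\USp(2g)} \prod_{j=1}^m \det(I+x_j\gamma)\,d\gamma = (x_1 \cdots x_m)^g \sum_{\lambda \trianglelefteq (g^m)} \mathfrak m_{\tilde \lambda}(\USp(2g))\, \chi^{\Sp(2m)}_\lambda.
\]
Then the entire task reduces to identifying which $\lambda \trianglelefteq (g^m)$ contribute a nonzero multiplicity $\mathfrak m_{\tilde\lambda}(\USp(2g))$.

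Next I would observe that since $\chi^{\Sp(2g)}_{\tilde\lambda}$ is already an irreducible character of $\USp(2g)$, its restriction to $H = \USp(2g)$ is itself, so by Schur orthogonality (or simply by irreducibility) the multiplicity $\mathfrak m_{\tilde\lambda}(\USp(2g))$ of the trivial representation equals $\delta(\tilde\lambda = \varnothing)$, i.e., $1$ if $\tilde\lambda$ is the empty partition and $0$ otherwise.

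Finally I would translate the condition $\tilde\lambda = \varnothing$ back to $\lambda$ using the definition $\tilde\lambda = (m-\lambda'_g, \dots, m-\lambda'_1)$ from \eqref{eqn-1}: this is the zero partition precisely when $\lambda'_j = m$ for all $j = 1, \dots, g$, which forces $\lambda = (g^m)$. Substituting leaves only the single term $\lambda = (g^m)$ in the sum, yielding exactly the claimed identity. There is no real obstacle here; the proof is a one-line unpacking of Proposition \ref{prop-main} once one notes that for $H = \USp(2g)$ the restriction is trivial and irreducibility forces the trivial summand to appear with multiplicity one only for the empty partition.
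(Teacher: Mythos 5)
Your proposal is correct and is essentially the paper's own proof: both invoke Proposition \ref{prop-main} and then use irreducibility of $\chi^{\Sp(2g)}_{\tilde\lambda}$ restricted to the full group $\USp(2g)$ to conclude that $\mathfrak m_{\tilde\lambda}=\delta(\tilde\lambda=\varnothing)$, which forces $\lambda=(g^m)$. The only difference is that you spell out the transpose computation $\lambda'=(m^g)\Rightarrow\lambda=(g^m)$, which the paper leaves implicit.
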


\begin{proof}
Since $H=\USp(2g)$, 
we have $\mathfrak m_{\tilde \lambda}(H)=0$ unless $\chi_{\tilde \lambda}^{\Sp(2g)}$ itself is trivial. If $\chi_{\tilde \lambda}^{\Sp(2g)}$ is trivial, we get $\tilde \lambda=\emptyset$, $\mathfrak m_{\tilde \lambda}(H)=1$ and $\lambda = (g^m)$.
\end{proof}

\section{Prototype: case $g=1$} \label{sec-g-1}

In this section, we compute $\mathfrak m_{\tilde \lambda}(H)$ for non-generic Sato--Tate groups $H \lneqq \USp(2)$ when $g=1$. The computations will demonstrate our approach which extends to the case $g=2$ in Sections \ref{sec-g-2} and \ref{sec-branching}.

\medskip

Let $\U(1)=\{ u \in \mathbb C^\times : |u|=1 \}$ be the circle group. We embed $\U(1)$ into $\USp(2)$ by \[u \longmapsto \mathrm{diag}(u,u^{-1}). \] Thus $\U(1)$ is a (maximal) torus of $\USp(2)$.
Under this embedding the non-generic Sato--Tate groups are $\U(1)$ and its normalizer $N(\U(1))$.

The auto-correlation functions for $g=1$ are explicitly given in the following theorem. 

\begin{theorem} \label{thm-genus-1}
For each $m \in \mathbb Z_{\ge 1}$, we have
\begin{align}  \int_{H}  \prod_{j=1}^m \det (I+ x_j \gamma)  d
\gamma & =  (x_1 \cdots x_m) \sum_{j=0}^{\lfloor \frac m 2 \rfloor} \mathfrak m_{(2j)} \chi_{(1^{m-2j})}^{\Sp(2m)} \label{eqn-re-11} ,
\end{align}
where
\begin{center}{\small
\begin{tabular}{ |c|c| }
\hline $H$ & $\mathfrak m_{(2j)}$  \\ \hline \vspace*{- 0.35 cm} & \\ \vspace*{- 0.35 cm}
$\USp(2)$ &
$\delta(j =0)$\\ & \\
\vspace*{- 0.35 cm}
$\U(1)$ & $1$  \\ & \\ 
$N(\U(1))$ & $\delta(j \equiv_2 0)$
 \\
 \hline
\end{tabular} }.
\end{center}
\end{theorem}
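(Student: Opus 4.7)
The plan is to apply Proposition \ref{prop-main} with $g=1$ and then compute, for each of the three groups $H = \USp(2), \U(1), N(\U(1))$, the multiplicities $\mathfrak m_{\tilde\lambda}(H)$. When $g=1$, every partition $\lambda \trianglelefteq (1^m)$ is of the form $\lambda = (1^k)$ for some $0 \le k \le m$, with transpose $\lambda' = (k)$ and hence $\tilde\lambda = (m-k)$. Substituting into Proposition \ref{prop-main} yields
\[
\int_H \prod_{j=1}^m \det(I + x_j \gamma)\, d\gamma = (x_1 \cdots x_m) \sum_{k=0}^{m} \mathfrak m_{(m-k)}(H)\, \chi^{\Sp(2m)}_{(1^k)},
\]
so to recover \eqref{eqn-re-11} it suffices to show that $\mathfrak m_{(n)}(H) = 0$ for odd $n$ in all three cases and to identify $\mathfrak m_{(2j)}(H)$ for each group, after which reindexing by $n = 2j$, $k = m - 2j$ finishes the proof.

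The cases $H = \USp(2)$ and $H = \U(1)$ are routine. For $\USp(2)$, the claim $\mathfrak m_{(n)} = \delta_{n,0}$ is just Schur orthogonality and recovers Corollary \ref{cor-generic}. For $\U(1)$, I would invoke the weight expansion $\chi^{\Sp(2)}_{(n)}(u) = u^n + u^{n-2} + \cdots + u^{-n}$, whose constant term equals $1$ exactly when $n$ is even and $0$ otherwise.

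The essential case is $H = N(\U(1))$. I would split the Haar integral along the coset decomposition $N(\U(1)) = \U(1) \sqcup w\,\U(1)$, where $w = \bigl(\begin{smallmatrix} 0 & 1 \\ -1 & 0 \end{smallmatrix}\bigr)$, to obtain
\[
\int_{N(\U(1))} \chi^{\Sp(2)}_{(n)}(\gamma)\, d\gamma = \tfrac12 \int_{\U(1)} \chi^{\Sp(2)}_{(n)}(u)\, du + \tfrac12 \int_{\U(1)} \chi^{\Sp(2)}_{(n)}(wu)\, du.
\]
The first summand was handled in the previous paragraph. The key observation for the second is that every element of $w\,\U(1)$ has trace $0$ and determinant $1$, hence eigenvalues $\pm i$, so $\chi^{\Sp(2)}_{(n)}$ is \emph{constant} on the coset with value $i^n + i^{n-2} + \cdots + i^{-n}$; this vanishes for odd $n$ and equals $(-1)^{n/2}$ for even $n$. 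Combining the two contributions gives $\mathfrak m_{(2j)}(N(\U(1))) = \tfrac12(1 + (-1)^j) = \delta(j \equiv_2 0)$, matching the table.

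The main obstacle I anticipate is the $N(\U(1))$ coset computation: specifically, verifying cleanly that $\chi^{\Sp(2)}_{(n)}$ is constant on $w\,\U(1)$ and tracking the resulting sign $(-1)^{n/2}$, which is precisely what produces the extra parity condition distinguishing $N(\U(1))$ from $\U(1)$. Once that sign is in place, the remainder is elementary bookkeeping, and I expect the same coset-splitting strategy to serve as the prototype for the disconnected Sato--Tate groups in the genus two case treated in later sections.
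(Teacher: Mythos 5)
Your proposal is correct, and for $H=\USp(2)$ and $H=\U(1)$ it is essentially the argument the paper gives (the constant term of $u^n+u^{n-2}+\cdots+u^{-n}$ is just the paper's count of the zero-weight vector $v_1^jv_2^j$ in $\mathrm{Sym}^{2j}V$). Where you diverge is in the case $H=N(\U(1))$: the paper stays inside the representation, observing that the unique $\U(1)$-invariant vector $v_1^jv_2^j\in\mathrm{Sym}^{2j}V$ satisfies $J(v_1^jv_2^j)=(-1)^jv_1^jv_2^j$ (since $Jv_1=-v_2$, $Jv_2=v_1$), so it is $J$-fixed exactly when $j$ is even; you instead split the Haar integral over the coset decomposition $N(\U(1))=\U(1)\sqcup w\,\U(1)$ and evaluate $\chi^{\Sp(2)}_{(n)}$ on the nontrivial coset, where every element has eigenvalues $\pm i$, obtaining the same contribution $(-1)^{j}$ for $n=2j$ and $0$ for $n$ odd. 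Both computations are correct and produce $\mathfrak m_{(2j)}=\tfrac12(1+(-1)^j)=\delta(j\equiv_2 0)$. The trade-off is instructive: your coset-plus-constant-character argument is precisely the mechanism the paper itself uses immediately afterwards in Proposition \ref{prop-first-identity} and throughout the genus-two computations of Section \ref{sec-g-2}, so your proof doubles as the prototype you say it is; the paper's explicit-vector argument, on the other hand, is the one that scales to the branching-rule analysis via crystals in Section \ref{sec-branching}, where one must track the action of $J$ on specific weight vectors rather than merely integrate characters.
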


\begin{proof}
If $\lambda \trianglelefteq (1^m)$, we can write
\[ \lambda = (1^k)\quad \text{ and } \quad \tilde \lambda=(m-k) \quad \text{ for } \ 0 \le k \le m. \]

First, assume that $H=\U(1)$. Let $v_1=(1,0)$ and $v_2=(0,1)$ be the standard unit vectors of $V:=\mathbb C^2$, and consider the standard representation of $\Sp(2, \mathbb C)$ on $V$. Consider the irreducible representation $\mathrm{Sym}^{m-k}(V)$ of $\Sp(2, \mathbb C)$ with the character $\chi_{(m-k)}^{\Sp(2)}$ of degree $m-k+1$. Then the trivial $\U(1)$-module is generated by $v_1^j v_2^j$ only when $m-k=2j$.
Thus the restriction  $\chi_{(m-k)}^{\Sp(2)}|_{\U(1)}$ has the trivial character with multiplicity $1$ if and only if $m-k$ is even. That is,
\[ \mathfrak m_{(m-k)} = \begin{cases} 1 & \text{ if $m-k$ is even}, \\ 0 & \text{ otherwise.} \end{cases} \]
Thus it follows from Proposition \ref{prop-main} that
\begin{align}  \int_{\U(1)}  \prod_{j=1}^m \det (I+ x_j \gamma)  d
\gamma & =  (x_1 \cdots x_m) \sum_{k=0}^{m} \mathfrak m_{(m-k)} \chi_{(1^{k})}^{\Sp(2m)} = (x_1 \cdots x_m) \sum_{j=0}^{\lfloor \frac m 2 \rfloor} \chi_{(1^{m-2j})}^{\Sp(2m)} , \label{eqn-u1-1} \end{align}
where we change the indices by $m-k=2j$. This proves \eqref{eqn-re-11} for $H=\U(1)$.

Next, assume that  $H=N(\U(1))$.
Let $J={\scriptsize \begin{pmatrix} 0 & 1 \\ -1 & 0 \end{pmatrix}} \in \USp(2)$. Then \[ N(\U(1)) = \U(1) \bigsqcup J\U(1), \]
and $Jv_1 = -v_2$ and $Jv_2= v_1$ on the standard representation of $\Sp(2, \mathbb C)$ on $V$. Consider again the irreducible representation $\mathrm{Sym}^{m-k}(V)$ of $\Sp(2, \mathbb C)$. As noted above, the trivial $\U(1)$-module is generated by $v_1^j v_2^j$ when $m-k=2j$, and $J$ acts trivially on $v_1^j v_2^j$ if and only if $j$ is even.
Consequently, the restriction  $\chi_{(m-k)}^{\Sp(2)}|_{N(\U(1))}$ has
\begin{equation*} 
\mathfrak m_{(m-k)} = \delta( m-k \equiv_4 0).
\end{equation*}

Now it follows from Proposition \ref{prop-main} that
\begin{align}  \int_{N(\U(1))}  \prod_{j=1}^m \det (I+ x_j \gamma)  d
\gamma & =  (x_1 \cdots x_m) \sum_{k=0}^{m} \mathfrak m_{(m-k)} \chi_{(1^{k})}^{\Sp(2m)} = (x_1 \cdots x_m) \sum_{\ell=0}^{\lfloor \frac m 4 \rfloor} \chi_{(1^{m-4 \ell})}^{\Sp(2m)} , \label{eqn-u1-2} \end{align}
where we change the indices by $m-k=4 \ell$. This proves \eqref{eqn-re-1} for $H=N(\U(1))$.

Together with Corollary \ref{cor-generic}, we have completed a proof.
\end{proof}

As corollaries, we obtain the following identities which are interesting in their own rights.

\begin{proposition} \label{prop-first-identity}
For each $m \in \mathbb Z_{\ge 1}$, we have the following identities:
\begin{align}
& \sum_{\ell =0}^{\lfloor m/2 \rfloor} \binom{2\ell}{\ell} \sum_{1 \le i_1 < \cdots <i_{m-2\ell} \le m}  \prod_{j=1}^{m-2\ell} (x_{i_j}+ x_{i_j}^{-1}) = \sum_{j=0}^{\lfloor \frac m 2 \rfloor}
\chi_{(1^{m-2j})}^{\Sp(2m)}, \label{gen-1-id-1} \\
& \prod_{i=1}^m (x_i+x_i^{-1}) =  \sum_{j=0}^{\lfloor \frac m 2 \rfloor} (-1)^j \chi_{(1^{m-2j})}^{\Sp(2m)} \label{gen-1-id-2}.
\end{align}
\end{proposition}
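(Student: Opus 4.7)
The plan is to derive both identities by evaluating the auto-correlation integrals appearing in Theorem~\ref{thm-genus-1} directly using explicit coordinates on $\U(1)$ and the coset decomposition of $N(\U(1))$, and then comparing with the character-theoretic right-hand sides already established there.

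For \eqref{gen-1-id-1} I would parametrize $\U(1)$ by $\gamma = \mathrm{diag}(t, t^{-1})$ and use the factorization
\[ \prod_{j=1}^m \det(I + x_j \gamma) = (x_1\cdots x_m) \prod_{j=1}^m \bigl((x_j + x_j^{-1}) + (t + t^{-1})\bigr). \]
Expanding the right-hand side as a polynomial in $u := t + t^{-1}$ whose coefficients are the elementary symmetric polynomials in $y_j := x_j + x_j^{-1}$, and applying the classical moment formula
\[ \int_{\U(1)} u^k \, d\gamma = \binom{k}{k/2}\,\delta(k \equiv_2 0), \]
which follows immediately from $u = 2\cos\theta$ and the binomial theorem, produces the left-hand side of \eqref{gen-1-id-1} multiplied by $x_1 \cdots x_m$. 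Equating with the $H = \U(1)$ case of Theorem~\ref{thm-genus-1} and cancelling the common factor gives \eqref{gen-1-id-1}.

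The derivation of \eqref{gen-1-id-2} is slightly more subtle and exploits the coset decomposition $N(\U(1)) = \U(1) \sqcup J\U(1)$. The key observation is that on the non-identity coset
\[ \det\bigl(I + x \cdot J\,\mathrm{diag}(t, t^{-1})\bigr) = \det\begin{pmatrix} 1 & xt^{-1} \\ -xt & 1 \end{pmatrix} = 1 + x^2, \]
which is \emph{independent} of $t$. Hence the $J\U(1)$-contribution to $\int_{N(\U(1))}$ reduces to $\tfrac{1}{2}\prod_j(1+x_j^2) = \tfrac{1}{2}(x_1 \cdots x_m)\prod_j(x_j + x_j^{-1})$, while the $\U(1)$-contribution is $\tfrac{1}{2}\int_{\U(1)}$. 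Substituting the Theorem~\ref{thm-genus-1} evaluations for both $\U(1)$ and $N(\U(1))$ into the resulting linear relation, solving for $\prod_j(x_j + x_j^{-1})$, and using the elementary identity $2\delta(j \equiv_2 0) - 1 = (-1)^j$ to collapse the difference of sums, I recover \eqref{gen-1-id-2}.

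No individual step presents a serious obstacle, but the point that merits attention is the coset computation for $N(\U(1))$: the phenomenon that the characteristic polynomial becomes constant on a non-identity coset is the precise mechanism that produces the non-trivial alternating-sign identity, and it is this mechanism that will be exploited systematically for disconnected Sato--Tate groups in the genus $2$ analysis, underlying the broader family of identities in Theorem~\ref{thm-identities-1}.
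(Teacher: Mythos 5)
Your proposal is correct and follows essentially the same route as the paper: identity \eqref{gen-1-id-1} via the central-binomial moment formula for $\int_{\U(1)}(t+t^{-1})^k$ combined with the $H=\U(1)$ case of Theorem \ref{thm-genus-1}, and identity \eqref{gen-1-id-2} via the coset decomposition $N(\U(1)) = \U(1)\sqcup J\U(1)$, the constancy $\det(I+xJ\,\mathrm{diag}(t,t^{-1}))=1+x^2$, and the comparison $2\delta(j\equiv_2 0)-1=(-1)^j$. Your closing remark about the constant-on-a-coset mechanism is precisely the observation the paper systematizes in the genus $2$ case.
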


\begin{proof}
Since $\int_{\U(1)} u^k du = \delta(k=0)$ for $k \in \mathbb Z$, we have
\begin{align}
&\int_{\U(1)} \prod_{i=1}^m (x_i+x_i^{-1}+(u+u^{-1})) du \nonumber \allowdisplaybreaks \\
& = \int_{\U(1)} \sum_{k=0}^m \sum_{1 \le i_1 < \cdots <i_k \le m}  \prod_{j=1}^k (x_{i_j}+ x_{i_j}^{-1})(u+u^{-1})^{m-k}du \nonumber \allowdisplaybreaks\\
&=\sum_{k=0}^m \sum_{1 \le i_1 < \cdots <i_k \le m}  \prod_{j=1}^k (x_{i_j}+ x_{i_j}^{-1}) \delta( m \equiv_2 k) \binom{m-k}{(m-k)/2}\nonumber \allowdisplaybreaks\\
\label{uu-1} & =\sum_{\ell =0}^{\lfloor m/2 \rfloor} \binom{2\ell}{\ell} \sum_{1 \le i_1 < \cdots <i_{m-2\ell} \le m}  \prod_{j=1}^{m-2\ell} (x_{i_j}+ x_{i_j}^{-1}),\end{align}
where we put $m-k=2 \ell$ for the last equality. Since $\det(I+ x\gamma)= 1+ (u+u^{-1})x+x^2$ for
$\gamma = {\tiny \begin{pmatrix} u &0 \\ 0 & u^{-1} \end{pmatrix}}$, the identity \eqref{gen-1-id-1} follows from Theorem \ref{thm-genus-1}.

For any $\gamma \in J \U(1)$, we compute to see that
\[  \det (I+x\gamma)= 1+x^2 . \]
Thus we have
\begin{align*}
 \int_{N(\U(1))}  \prod_{i=1}^m \det (I+ x_i \gamma)  \, d
\gamma & = \frac 1 2 \int_{\U(1)}  \prod_{i=1}^m \det (I+ x_i \gamma_1) \, d
\gamma_1 + \frac 1 2  \int_{\U(1)}  \prod_{i=1}^m \det (I+ x_iJ \gamma_1)  \, d
\gamma_1, \\
& =  \frac 1 2 \int_{\U(1)}  \prod_{i=1}^m \det (I+ x_i \gamma) \, d
\gamma_1 + \frac 1 2  \prod_{i=1}^m (1+x_i^2) \\
&=  \frac 1 2 (x_1 \cdots x_m) \sum_{j=0}^{\lfloor \frac m 2 \rfloor} \chi_{(1^{m-2j})}^{\Sp(2m)} + \frac 1 2  \prod_{i=1}^m (1+x_i^2),
\end{align*}
where $d\gamma_1$ is the probability Haar measure on $\U(1)$ and we use \eqref{eqn-u1-1} for the last equality.
By comparing with \eqref{eqn-u1-2},  we obtain
\[  \prod_{i=1}^m (1+x_i^2) =(x_1 \cdots x_m) \sum_{j=0}^{\lfloor \frac m 2 \rfloor} (-1)^j \chi_{(1^{m-2j})}^{\Sp(2m)}   .\]
Diving both sides by $x_1 \cdots x_m$, we obtain the desired identity \eqref{gen-1-id-2}.
\end{proof}

\section{Main results: case $g=2$} \label{sec-g-2}

After fixing notations for Sato--Tate groups, we present the first main theorem of this paper and go over its proof. Much of the computations of branching rules involving crystals will be performed in Section \ref{sec-branching} though we use the results of the branching rules in this section.
In the process we will decompose Sato--Tate groups into cosets and prove various identities of irreducible characters of $\Sp(2m)$ for all $m \ge 1$, which form another set of main results in this paper.

\smallskip

We will adopt the same notations for the Sato--Tate groups as in \cite{FKRS}. To make this paper more self-contained, we recall the definitions of these groups. We take the group $\USp(4)$ to fix the symplectic form $\begin{pmatrix} 0 & I_2 \\ -I_2 & 0 \end{pmatrix}$, where $I_2$ is the $2 \times 2$ identity matrix. Let $E_{ij}$  be the $4 \times 4$ elementary matrix which has  $(i,j)$-entry equal to $1$ and other entries equal to $0$.
We fix a basis for the Lie algebra $\mathfrak{sp}_4 (\mathbb C)$:
\begin{align*}
& e_1=E_{12}-E_{43}, && f_1=E_{21}-E_{34}, && h_1=E_{11}-E_{22}-E_{33}+E_{44},\\
&e_2=E_{24}, &&  f_2=E_{42}, &&h_2=E_{22}-E_{44}.
\end{align*}
Set
\begin{align*}
&\hat{e}_1=E_{13}, && \hat{f}_1=E_{31},&& \hat{h}_1=E_{11}-E_{33},\\
&\hat{e}_2=e_2=E_{24}, && \hat{f}_2=f_2=E_{42},   &&\hat{h}_2=h_2=E_{22}-E_{44} .\end{align*}
Define weights $\epsilon_i$, $i=1,2$, by
$ \epsilon_i(\hat h_j) = \delta_{ij}$.
Then the simple roots $\mathfrak{sp}_4 (\mathbb C)$ are
\[ \alpha_1 = \epsilon_1 - \epsilon_2, \quad \alpha_2 = 2 \epsilon_2 ,\]
and the fundamental weights are
\[ \varpi_1 = \epsilon_1, \quad \varpi_2 = \epsilon_1+\epsilon_2 .\]
A pair of non-negative integers $(a,b)$ with $a \ge b$, or a partition $(a,b)$ of length $\le 2$ will be considered as a weight corresponding to $a \epsilon_1 + b \epsilon_2$.

We embed $\U(1)$ into $\USp(4)$ by
\begin{equation*} 
 u \longmapsto \mathrm{diag}(u,u,u^{-1},u^{-1}) ,
\end{equation*}
and $\SU(2)$ and $\U(2)$ into $\USp(4)$ by
\begin{equation}\label{embed-su2} A \longmapsto \begin{pmatrix} A&0  \\ 0& \overline{A} \end{pmatrix},\end{equation}
where $\overline A$ consists of the complex conjugates of the entries of $A$.

We fix an embedding \begin{equation} \label{embed-su2-su2} \SU(2) \times \SU(2) \hookrightarrow \USp(4)\end{equation} in such a way that the induced Lie algebra embedding $\mathfrak {sl}_2(\mathbb C) \times \mathfrak{sl}_2(\mathbb C) \rightarrow \mathfrak {sp}_4(\mathbb C)$ gives
\[ (h,0) \longmapsto \hat{h}_1 \quad \text{ and }\quad (0,h) \longmapsto \hat{h}_2 .\]
From this, we also induce an embedding \[\U(1)\times \SU(2) \hookrightarrow \USp(4).\]

Identify $\SU(2)$ with the group of unit quaternions via the isomorphism
\[  a+ b \, \pmb{\rm i} + c \, \pmb{\rm j} + d \, \pmb{\rm k}  \mapsto \begin{pmatrix} a+bi & c+di \\ -c+di & a-bi \end{pmatrix},  \quad a,b,c,d \in \mathbb R,   \]
and also identify them with the corresponding elements in $\USp(4)$ through the embedding $\SU(2) \hookrightarrow \USp(4)$ in \eqref{embed-su2}. For example, with this identification, we have
$ \mathbf j = \scriptsize { \begin{pmatrix} 0 & 1 & 0 & 0 \\ -1 & 0 & 0&0 \\ 0 &0&0&1\\ 0& 0&-1&0 \end{pmatrix} }$.

Let $J = \scriptsize { \begin{pmatrix} 0 & 0 & 0 & 1 \\ 0 & 0 & -1&0 \\ 0 &-1&0&0\\ 1& 0&0&0 \end{pmatrix} }$. We write
$\pmb{\zeta}_{2n}= \begin{pmatrix} e^{\pi i/n} & 0 \\ 0 & e^{-\pi i/n} \end{pmatrix} \in \SU(2)$, and its embedded image in $\USp(4)$ will also be written as $\pmb{\zeta}_{2n}$.
Set \begin{align} Q_1&=\{ \pm 1, \pm \mathbf i, \pm \mathbf j, \pm \mathbf k, \tfrac 1 2 ( \pm 1 \pm \mathbf i \pm \mathbf j \pm \mathbf k \} ),\label{q1} \\
\label{q2}
Q_2 &=\left \{ \tfrac 1 {\sqrt 2} ( \pm 1 \pm \mathbf i), \tfrac 1 {\sqrt 2} ( \pm 1 \pm \mathbf j), \tfrac 1 {\sqrt 2} ( \pm 1 \pm \mathbf k),\tfrac 1 {\sqrt 2} ( \pm \mathbf i \pm \mathbf j),\tfrac 1 {\sqrt 2} ( \pm \mathbf i \pm \mathbf k),\tfrac 1 {\sqrt 2} ( \pm \mathbf j \pm \mathbf k) \right \}.
\end{align}

We have embedding\footnote{In \cite{FKRS}, the symplectic form of $\USp(4)$ is changed for $\U(1) \times \U(1)$ and for the groups that contain it. In this paper, we do not change the symplectic form. There is no difference except for switching some indices.} $\U(1) \times \U(1)$ into $\USp(4)$ by
\begin{equation} \label{embed-u1-u1}  (u_1, u_2) \mapsto \mathrm{diag}(u_1, u_2, u_1^{-1}, u_2^{-1}) .\end{equation}
Let
\begin{align}  \mathtt a &= {\tiny \begin{pmatrix}  0&0&1&0 \\  0&1&0&0 \\ -1&0&0&0 \\ 0&0&0&1 \end{pmatrix}},  & \mathtt b& = {\tiny \begin{pmatrix}  1&0&0&0 \\  0&0&0&1 \\ 0&0&1&0 \\ 0&-1&0&0 \end{pmatrix}},
& \mathtt c& = {\tiny \begin{pmatrix}  0&1&0&0 \\  -1&0&0&0 \\ 0&0&0&1 \\ 0&0&-1&0 \end{pmatrix}}. \label{abc} \end{align}

\begin{definition}[Sato--Tate groups] 
(1)
For $n=1,2,3,4,6$, define
\[C_n := \langle \U(1), \pmb{\zeta}_{2n} \rangle . \] For $n=2,3,4,6$, define
\[ D_n: = \langle C_n,  \mathbf j \rangle. \] With $Q_1$ and $Q_2$ in \eqref{q1} and \eqref{q2}, respectively, define \[   T:= \langle \U(1), Q_1 \rangle \quad
\text{ and } \quad O := \langle T, Q_2 \rangle.  \]

(2) Define the groups
\begin{align*} J(C_n)&:=\langle C_n,  J \rangle \quad (n=1,2,3,4,6), & J(D_n)&:=\langle D_n ,J \rangle \quad (n=2,3,4,6),\\ \quad J(T)&:=\langle T, J \rangle , & J(O)&:=\langle O, J \rangle . \end{align*}

(3) For $n=2,4,6$, define
\[ C_{n,1} := \langle \U(1) , J \pmb{\zeta}_{2n} \rangle \quad \text{ and } \quad D_{n,1} := \langle \U(1), J \pmb{\zeta}_{2n},\,  \mathbf j \rangle . \]
For $n=3,4,6$, define
\[ D_{n,2} := \langle \U(1), \pmb{\zeta}_{2n}, J \mathbf j \rangle , \]
and define
\[ O_1 := \langle T, J Q_2 \rangle  \] with $Q_2$ in \eqref{q2}.

(4) For $n=1,2,3,4,6$, define \[ E_n:= \langle \SU(2), e^{\pi i/n} \rangle  \quad \text{ and } \quad J(E_n) := \langle \SU(2), e^{\pi i/n}, J \rangle ,\] where $e^{\pi i/n}$ is identified with  \[\mathrm{diag}(e^{\pi i/n},e^{\pi i/n},e^{-\pi i/n},e^{-\pi i/n}).\]

(5) The image of $\U(2)$ is denoted by the same notation and its normalizer by $N(\U(2))$.

(6) Define $F$ to be the image of $\U(1) \times \U(1)$ under the embedding \eqref{embed-u1-u1}, and define
\begin{align*} F_{\mathtt a}&= \langle F, \mathtt a \rangle, \quad F_{\mathtt c}= \langle F, \mathtt c \rangle, \quad F_{\mathtt{ab}}= \langle F, \mathtt{ab} \rangle,\quad F_{\mathtt{ac}}= \langle F, \mathtt{ac} \rangle, \\
F_{\mathtt{a},\mathtt{b}}&= \langle F, \mathtt{a},\mathtt{b} \rangle, \quad F_{\mathtt{ab},\mathtt{c}}= \langle F, \mathtt{ab},\mathtt{c} \rangle, \quad F_{\mathtt{a},\mathtt{b},\mathtt{c}}= \langle F, \mathtt{a},\mathtt{b},
\mathtt{c} \rangle ,\end{align*}
where $\mathtt{a},\mathtt{b},\mathtt{c}$ are defined in \eqref{abc}.

(7) Define $G_{1,3}$ and $G_{3,3}$ to be the images of $\U(1) \times \SU(2)$ and $\SU(2) \times \SU(2)$ respectively under the embedding \eqref{embed-su2-su2}, and $N(G_{1,3})$ and $N(G_{3,3})$ to be their normalizers in $\USp(4)$.

\end{definition}

The following is one of the main theorems in this paper.

\begin{theorem}[Theorem \ref{thm-genus-2-1}] \label{thm-genus-2}
For each $m \in \mathbb Z_{\ge 1}$, we have
\begin{align}  \int_{H}  \prod_{j=1}^m \det (I+ x_j \gamma)  d
\gamma & =  (x_1 \cdots x_m)^2 \sum_{b=0}^{m} \sum_{z=0}^{\lfloor \frac {m-b} 2 \rfloor} \mathfrak m_{(b+2z,b)} \chi_{(2^{m-b-2z}, 1^{2z})}^{\Sp(2m)} \label{eqn-re-1} ,
\end{align}
where $\mathfrak m_{(b+2z,b)}$ is the multiplicity of the trivial representation in the restriction $\chi^{\Sp(4)}_{(b+2z,b)}\big |_H$ for each Sate--Tate group $H \le \USp(4)$ and is explicitly given in Table \ref{tab-1}.
\end{theorem}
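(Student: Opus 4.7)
The plan is to specialize Proposition \ref{prop-main} to $g=2$, reparametrize the resulting sum via a parity argument, and then invoke the case-by-case branching computations for each Sato--Tate group $H$ (which live in Section \ref{sec-branching}).

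First, Proposition \ref{prop-main} with $g=2$ gives immediately
\[
\int_H \prod_{j=1}^m \det(I + x_j \gamma)\, d\gamma = (x_1 \cdots x_m)^2 \sum_{\lambda \trianglelefteq (2^m)} \mathfrak m_{\tilde \lambda}(H)\, \chi_\lambda^{\Sp(2m)}(x_1^{\pm 1},\dots,x_m^{\pm 1}),
\]
so the task reduces to evaluating the multiplicities $\mathfrak m_{\tilde \lambda}(H)$. Next, I parametrize partitions $\lambda \trianglelefteq (2^m)$: each such $\lambda$ has the form $\lambda = (2^a, 1^c)$ with $a,c\ge 0$ and $a+c\le m$. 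The transpose is $\lambda' = (a+c, a)$, hence $\tilde\lambda = (m-a, m-a-c)$. Writing $b := m-a-c$, this becomes $\tilde\lambda = (b+c, b)$ with $b, c \ge 0$ and $b+c \le m$.

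The key parity observation is that every Sato--Tate group $H$ contains the central element $-I_4 \in \USp(4)$: this is immediate from the definitions, since each $H$ contains either the embedded $\U(1)$ (through $u = -1$), the embedded $\SU(2)$ (through $-I_2$), or $\U(1)\times\U(1)$ (through $(-1,-1)$). Because $-I_4$ acts on the $\Sp(4,\mathbb C)$-irreducible of highest weight $(b+c)\epsilon_1 + b\epsilon_2$ as the scalar $(-1)^{(b+c)+b} = (-1)^c$, the trivial $H$-module cannot appear in $\chi^{\Sp(4)}_{(b+c, b)}|_H$ unless $c$ is even. Setting $c = 2z$ and $\lambda = (2^{m-b-2z}, 1^{2z})$ then gives $\tilde\lambda = (b+2z, b)$ and the index ranges $0 \le b \le m$, $0 \le z \le \lfloor (m-b)/2 \rfloor$, matching exactly the summation in the statement.

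All that remains is to compute the multiplicities $\mathfrak m_{(b+2z, b)}(H)$ for each of the 57 Sato--Tate groups $H \le \USp(4)$, and this is where the main difficulty lies. Almost all of these groups are disconnected, arising as extensions of a connected identity component by a finite group of coset representatives that implement outer twists; standard branching rules for pairs of connected compact groups therefore do not apply directly. Instead, one decomposes the restriction $\chi^{\Sp(4)}_{(b+2z, b)}|_H$ according to the coset structure and computes the trace of each coset representative on the $\USp(4)$-module of highest weight $(b+2z, b)$. The systematic bookkeeping will be done via crystal bases, but for several groups (notably the tetrahedral/octahedral extensions $T$, $O$, $J(T)$, $J(O)$ and some of the $F_\ast$ extensions) crystal arguments alone do not determine the trace of a non-identity coset representative, and more concrete realizations of the underlying $\USp(4)$-modules will be needed. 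The resulting group-by-group multiplicities are collected in Table \ref{tab-1}, completing the proof.
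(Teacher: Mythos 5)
Your reduction is exactly the paper's: Proposition \ref{prop-main} with $g=2$, the reparametrization $\lambda=(2^{m-b-2z},1^{2z})$, $\tilde\lambda=(b+2z,b)$, and the vanishing of $\mathfrak m_{(b+c,b)}$ for $c$ odd (the paper derives this from the circle $\mathrm{diag}(u,u,u^{-1},u^{-1})$ or $\mathrm{diag}(u,u^{-1},u^{-1},u)$ contained in each $H$, you from the central element $-I_4\in H$; both are valid and give the same constraint). The only caveat is that the actual substance of the theorem --- verifying all $57$ entries of Table \ref{tab-1} --- is merely described in your final paragraph, whereas the paper's proof carries this out group by group throughout Section \ref{sec-g-2} using the branching results of Section \ref{sec-branching}; your sketch of the needed tools (coset decompositions with coset-wise characteristic polynomials, counting fixed weight vectors via crystals, and concrete module realizations where crystals do not suffice) accurately matches what the paper does, but none of those computations is performed in the proposal.
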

{\scriptsize
\begin{table}\begin{center}
\begin{tabular}{ |c|c||c|c| }
\hline
$H$ & $\mathfrak m_{(b+2z,b)}$ & $H$ & $\mathfrak m_{(b+2z,b)}$ \\
\hline
$C_1$ & $\eta_1$ & $D_{3,2}$ & $\tfrac 1 6 \eta_1 + \tfrac 1 3 \eta_3+ \tfrac 1 2 \psi_2$\\
$C_2$ & $\tfrac 1 2 \eta_1 + \tfrac 1 2 \eta_2$ & $D_{4,2}$ & $\tfrac 1 8 \eta_1 + \tfrac 1 8 \eta_2 + \tfrac 1 4 \eta_4 + \tfrac 1 2 \psi_2$\\
$C_3$ & $\tfrac 1 3 \eta_1 + \tfrac 2 3 \eta_3$  & $D_{6,2}$ & $\tfrac 1 {12} \eta_1 + \tfrac 1 {12} \eta_2 + \tfrac 1 6 \eta_3+ \tfrac 1 6 \eta_6 + \tfrac 1 2 \psi_2$ \\
$C_4$ & $\tfrac 1 4 \eta_1 + \tfrac 1 4 \eta_2 + \tfrac 1 2 \eta_4 $  & $O_1$ & $\tfrac 1 {24} \eta_1 + \tfrac 1 8 \eta_2 + \tfrac 1 3 \eta_3+ \tfrac 1 4 \psi_2 + \tfrac 1 4 \psi_4$ \\
$C_6$ & $\tfrac 1 6 \eta_1 + \tfrac 1 6 \eta_2 + \tfrac 1 3 \eta_3+ \tfrac 1 3 \eta_6 $ & $E_1$ & $b+1$ \\
$D_2$ & $\tfrac 1 4 \eta_1 + \tfrac 3 4 \eta_2$ &  $E_2$ & $(b+1) \, \delta(b \equiv_2 0)$\\
$D_3$ &  $\tfrac 1 6 \eta_1 + \frac 1 2 \eta_2+ \tfrac 1 3 \eta_3$  & $E_3$ & $\lfloor b/3 \rfloor +1 -\delta(b \equiv_3 1)$  \\
$D_4$ &  $\tfrac 1 8 \eta_1 + \tfrac 5 8 \eta_2 + \tfrac 1 4 \eta_4$  & $E_4$ & $(2 \lfloor b/4 \rfloor +1) \,  \delta(b \equiv_2 0)$  \\
$D_6$ &   $\tfrac 1 {12} \eta_1 + \tfrac 7 {12} \eta_2 + \tfrac 1 6 \eta_3+ \tfrac 1 6 \eta_6 $ & $E_6$ & $(2 \lfloor b/6 \rfloor +1) \,  \delta(b \equiv_2 0 )$   \\
$T$ & $\tfrac 1 {12} \eta_1 + \tfrac 1 4 \eta_2 + \tfrac 2 3 \eta_3 $  & $J(E_1)$ & $\tfrac 1 2 (b+1) +\tfrac 1 2  (-1)^z  \, \delta(b \equiv_2 0)$\\
$O$ & $\tfrac 1 {24} \eta_1 + \tfrac 3 8 \eta_2 + \tfrac 1 3 \eta_3 +\tfrac 1 4 \eta_4$  &  $J(E_2)$& $(b/2+ \delta(z \equiv_2 0))  \, \delta(b \equiv_2 0)$\\
$J(C_1)$ & $\theta_1$ & $J(E_3)$& $\tfrac 1 2 (\lfloor b/3 \rfloor +1 -\delta(b \equiv_3 1)) + \tfrac 1 2  (-1)^z  \, \delta(b \equiv_2 0)$ \\
$J(C_2)$ & $\tfrac 1 2 \theta_1 + \tfrac 1 2 \theta_2$ &  $J(E_4)$& $ ( \lfloor b/4 \rfloor+ \delta(z \equiv_2 0))  \, \delta(b \equiv_2 0)$\\
$J(C_3)$ & $\tfrac 1 3 \theta_1 + \tfrac 2 3 \theta_3$  & $J(E_6)$& $ ( \lfloor b/6 \rfloor+ \delta(z \equiv_2 0))  \, \delta(b \equiv_2 0)$\\
$J(C_4)$ & $\tfrac 1 4 \theta_1 + \tfrac 1 4 \theta_2 + \tfrac 1 2 \theta_4$ & $\U(2)$& $\delta(b \equiv_2 0)$ \\
$J(C_6)$ & $\tfrac 1 6 \theta_1 + \tfrac 1 6 \theta_2 + \tfrac 1 3 \theta_3+ \tfrac 1 3 \theta_6 $ &$N(\U(2))$& $\delta(b \equiv_2 0)\, \delta(z \equiv_2 0)$  \\
$J(D_2)$ & $\tfrac 1 4 \theta_1 + \tfrac 3 4 \theta_2$ & $F$ & $\xi_1$\\
$J(D_3)$ & $\tfrac 1 6 \theta_1 + \frac 1 2 \theta_2+ \tfrac 1 3 \theta_3$  & $F_{\mathtt{a}}$ & $\tfrac 1 2 \xi_1 + \tfrac 1 2 \xi_2$\\
$J(D_4)$ &$\tfrac 1 8 \theta_1 + \tfrac 5 8 \theta_2 + \tfrac 1 4 \theta_4$  &  $F_{\mathtt{c}}$ & $\tfrac 1 2 \xi_1 + \tfrac 1 2 \eta_2$\\
$J(D_6)$ & $\tfrac 1 {12} \theta_1 + \tfrac 7 {12} \theta_2 + \tfrac 1 6 \theta_3+ \tfrac 1 6 \theta_6$ & $F_{\mathtt{ab}}$ & $\tfrac 1 2 \xi_1 +\tfrac 1 2 \psi_2$\\
$J(T)$ & $\tfrac 1 {12} \theta_1 + \tfrac 1 4 \theta_2 + \tfrac 2 3 \theta_3 $  & $F_{\mathtt{ac}}$ & $\tfrac 1 4 \xi_1 + \tfrac 1 4 \psi_2 +\tfrac 1 2 \psi_4$\\
$J(O)$ & $\tfrac 1 {24} \theta_1 + \tfrac 3 8 \theta_2 + \tfrac 1 3 \theta_3 +\tfrac 1 4 \theta_4$  &  $F_{\mathtt{a},\mathtt{b}}$  &  $\tfrac 1 4 \xi_1 + \tfrac 1 4 \psi_2 +\tfrac 1 2 \xi_2$\\
$C_{2,1}$ & $\tfrac 1 2 \eta_1 + \tfrac 1 2 \psi_2$  & $F_{\mathtt{ab},\mathtt{c}}$ & $\tfrac 1 4 \xi_1+\tfrac 1 4 \psi_2 +\tfrac 1 2 \eta_2 $\\
$C_{4,1}$ & $\tfrac 1 4 \eta_1 + \tfrac 1 4 \eta_2 + \tfrac 1 2 \psi_4$  &  $F_{\mathtt{a},\mathtt{b},\mathtt{c}}$& $\tfrac 1 8 \xi_1 + \tfrac 1 4 \xi_2+ \tfrac 1 8 \psi_2 +\tfrac 1 4 \psi_4 + \tfrac 1 4 \eta_2$\\
$C_{6,1}$ & $\tfrac 1 6 \eta_1 + \tfrac 1 3 \eta_3 + \tfrac 1 6 \psi_2 + \tfrac 1 3 \psi_6$  & $G_{1,3}$  & $1$\\
$D_{2,1}$& $\tfrac 1 4 \eta_1 +\tfrac 1 4 \eta_2+ \tfrac 1 2 \psi_2$ & $N(G_{1,3})$& $\delta(z \equiv_2 0)$\\
$D_{4,1}$ &$\tfrac 1 8 \eta_1 + \tfrac 3 8 \eta_2 + \tfrac 1 4 \psi_2+ \tfrac 1 4 \psi_4$ & $G_{3,3}$ & $\delta(z=0)$\\
$D_{6,1}$& $\tfrac 1 {12} \eta_1 + \tfrac 1 4 \eta_2+ \tfrac 1 6 \eta_3 + \tfrac 1 3 \psi_2 + \tfrac 1 6 \psi_6$&$N(G_{3,3})$ & $\delta(b\equiv_2 0) \, \delta(z=0)$ \\
&& $\USp(4)$ & $\delta(b=0) \, \delta(z=0)$ \\
 \hline
\end{tabular}
\end{center}
\caption{Coefficients $\mathfrak m_{(b+2z,b)}$}\label{tab-1} \end{table}
} Here we 
define
\begin{align*} \eta_1(z,b) &:=(b+1)(z^2 + zb+ 2z +b/2+1),  \\
\eta_2(z,b)&:= \begin{cases} - \tfrac {b+1} 2  & \text{if $b$ is odd}, \\
\tfrac b 2 + \delta(\text{$z$ is even}) & \text{if $b$ is even}, \end{cases}
\end{align*}
and the functions $\eta_i(z,b)$, $i=3,4,6$ on the congruence classes of $z$ and $b$ as in Table \ref{tab-2};
\begin{table}
\begin{center}
{\scriptsize
\begin{tabular}{|c||c|c|c|}
\hline $z \backslash b$ & 0&1&2 \\ \hline \hline
0&1 & 0& 0 \\ \hline
1&1 & $-1$ & 0 \\ \hline
2&0 & $-1$ & 0 \\ \hline
\end{tabular} } \quad
{\scriptsize
\begin{tabular}{|c||c|c|c|c|}
\hline
$z\backslash b$ & 0&1&2&3\\ \hline \hline
0&1 & 1&0& 0 \\ \hline
1&2&1 & $-1$ & 0 \\ \hline
2&1 & $-1$ & $-2$&0 \\ \hline
3&0 & $-1$ & $-1$& 0 \\ \hline
\end{tabular}} \quad
{\scriptsize
\begin{tabular}{|c||c|c|c|c|c|c|}
\hline
$z\backslash b$& 0&1&2&3&4&5 \\ \hline \hline
0&1 & 2&2&1&0& 0 \\ \hline
1&3&5&4&1 & $-1$ & 0 \\ \hline
2&4 &5&2& $-2$ & $-3$&0 \\ \hline
3&3&2&$-2$ & $-5$ & $-4$& 0 \\ \hline
4&1 & $-1$ &$-4$ & $-5$& $-3$&0 \\ \hline
5&0&$-1$ & $-2$ & $-2$& $-1$&0 \\ \hline
\end{tabular}}
\end{center}
\caption{Functions $\eta_i(z,b)$, $i=3,4,6$}\label{tab-2}
\end{table}
define
\begin{align}
\psi_1(z,b) &:= (-1)^b (b+1)(z+b/2+1) \label{def-psi-1},\\
\psi_2(z,b) &:= \begin{cases} (-1)^z  \tfrac {b+1} 2  & \text{ if $b$ is odd}, \\ (-1)^z (z +b/2+1) & \text{ if $b$ is even}, \end{cases} \label{def-psi-2}
\end{align}
and the functions  $\psi_i(z,b)$, $i=3,4,6$ on the congruence classes of $z$ and $b$  as in Table \ref{tab-3};
\begin{table}
\begin{center} {\scriptsize
\begin{tabular}{|c||c|c|c|c|c|c|}
\hline
$z\backslash b$ & 0&1&2&3&4&5\\ \hline \hline
0&$1$ & $0$&$0$& $-1$& $0$& $0$ \\ \hline
1&$-1$&$1$ & $0$ & $1$ & $-1$& $0$\\ \hline
2& $0$ & $-1$ & $0$&$0$ & $1$& $0$\\ \hline
\end{tabular}} \quad
{\scriptsize
\begin{tabular}{|c||c|c|c|c|}
\hline
$z\backslash b$ & 0&1&2&3\\ \hline \hline
0&$1$ & $-1$&$0$& $0$ \\ \hline
1&$0$&$1$ & $-1$ & $0$ \\ \hline
2& $-1$ & $1$ & $0$&$0$ \\ \hline
3&$0$ & $-1$ & $1$ & $0$ \\ \hline
\end{tabular}} \quad
{\scriptsize
\begin{tabular}{|c||c|c|c|c|c|c|}
\hline
$z\backslash b$ & 0&1&2&3&4&5\\ \hline \hline
0& $1$ & $-2$&$2$& $-1$& $0$& $0$ \\ \hline
1& $1$&$-1$ & $0$ & $1$ & $-1$& $0$\\ \hline
2& $0$ & $1$ & $-2$&$2$ & $-1$& $0$\\ \hline
3& $-1$ & $2$ & $-2$&$1$ & $0$& $0$\\ \hline
4& $-1$ & $1$ & $0$&$-1$ & $1$& $0$\\ \hline
5& $0$ & $-1$ & $2$&$-2$ & $1$& $0$\\ \hline
\end{tabular}}
\end{center}
\caption{Functions $\psi_i(z,b)$, $i=3,4,6$} \label{tab-3}
\end{table}
define \[\xi_1 (z,b) := z(b+1) +\lfloor b/2 \rfloor +1,
\]
and $\xi_2 (z,b) $ on  the congruence classes of $z$ and $b$ as in Table \ref{tab-4};
\begin{table}
\begin{center} {\scriptsize
\begin{tabular}{|c||c|c|c|c|}
\hline
$z\backslash b$ & 0&1&2&3\\ \hline \hline
0&$1$ & $1$&$0$& $0$ \\ \hline
1&$0$&$-1$ & $-1$ & $0$ \\ \hline
\end{tabular}} \end{center}
\caption{Function $\xi_2(z,b)$} \label{tab-4}
\end{table}
finally define
\begin{equation} \label{def-th} \theta_i (z,b) := \frac 1 2 \eta_i (z,b) + \frac 1 2 \psi_i (z,b) , \qquad i=1,2,3,4,6 .\end{equation}
More explicitly,
\begin{align*} \theta_1(z,b) &:= \begin{cases} \tfrac 1 2 z(b+1)(z+b+1)  & \text{ if $b$ is odd}, \\ \tfrac 1 2 (z+1)(b+1)(z+b+2) & \text{ if $b$ is even}, \end{cases}\\
\theta_2(z,b) &: =\begin{cases} -\tfrac 1 2(b+1) & \text{ if $b$ is odd and $z$ is odd}, \\
0 & \text{ if $b$ is odd and $z$ is even}, \\ -\tfrac 1 2(z+1) & \text{ if $b$ is even and $z$ is odd}, \\ \tfrac 1 2 b+ \tfrac 1 2 z+1 & \text{ if $b$ is even and $z$ is even},
\end{cases}
\end{align*}
and $\theta_i(z,b) $, $i=3,4,6$, are determined by the congruence classes of $z$ and $b$ as in Table \ref{tab-5}.
\begin{table}
\begin{center}{\scriptsize
\begin{tabular}{|c||c|c|c|c|c|c|}
\hline
$z\backslash b$ & 0&1&2&3&4&5\\ \hline \hline
0&$1$ & $0$&$0$& $0$& $0$& $0$ \\ \hline
1&$0$&$0$ & $0$ & $1$ & $-1$& $0$\\ \hline
2& $0$ & $-1$ & $0$&$0$ & $0$& $0$\\ \hline
\end{tabular}}\quad
{\scriptsize
\begin{tabular}{|c||c|c|c|c|}
\hline
$z\backslash b$ & 0&1&2&3\\ \hline \hline
0&$1$ & $0$&$0$& $0$ \\ \hline
1&$1$&$1$ & $-1$ & $0$ \\ \hline
2& $0$ & $0$ & $-1$&$0$ \\ \hline
3&$0$ & $-1$ & $0$ & $0$ \\ \hline
\end{tabular}} \quad
{\scriptsize
\begin{tabular}{|c||c|c|c|c|c|c|}
\hline
$z\backslash b$ & 0&1&2&3&4&5\\ \hline \hline
0& $1$ & $0$&$2$& $0$& $0$& $0$ \\ \hline
1& $2$&$2$ & $2$ & $1$ & $-1$& $0$\\ \hline
2& $2$ & $3$ & $0$&$0$ & $-2$& $0$\\ \hline
3& $1$ & $2$ & $-2$&$-2$ & $-2$& $0$\\ \hline
4& $0$ & $0$ & $-2$&$-3$ & $-1$& $0$\\ \hline
5& $0$ & $-1$ & $0$&$-2$ & $0$& $0$\\ \hline
\end{tabular}} \end{center}
\caption{Functions $\theta_i(z,b)$, $i=3,4,6$} \label{tab-5}
\end{table}

\bigskip

In the rest of this section, we prove Theorem \ref{thm-genus-2}. The cases $J(C_n), J(E_n)$ and $F_{\mathtt{a}}$ will lead to Theorems \ref{thm-identities}, \ref{thm-identities-e} and \ref{thm-identities-f},  which are also main results of this paper. We denote by $V_{\tilde \lambda}^{\Sp(4)}$ the irreducible representation of $\Sp(4)$ with  character  $\chi_{\tilde \lambda}^{\Sp(4)}$.

\medskip

From Proposition \ref{prop-main}, we have
\[\int_{H} \prod_{i=1}^m \det (I+ x_i \gamma)  \, d \gamma = (x_1 \cdots x_m)^{2} \sum_{\lambda \trianglelefteq (2^m)} \mathfrak m_{\tilde \lambda}(H) \, \chi_\lambda^{\Sp(2m)}. \]
In particular, $\tilde \lambda=(m-\lambda'_2,  m-\lambda'_1)$ with $\lambda' =(\lambda'_1, \lambda'_2)$ the transpose of $\lambda$.
For $\tilde \lambda = (a,b) \trianglelefteq (m^2)$, we define $z:=(a-b)/2$. Then $\tilde \lambda=(b+2z, b)$ and $\lambda = (2^{m-b-2z}, 1^{2z})$ for $0 \le b \le m$ and   $0 \le z \le  \frac {m-b} 2$.

For each Sato--Tate group $H$, the number $\mathfrak m_{(a,b)}(H)$ is equal to the number of independent weight vectors $v_\mu$ with weight $\mu$ in  $V_{(a,b)}^{\Sp(4)}$ which are fixed by $H$ and satisfy some other conditions. Since each $H$ contains either
\[ \mathrm{diag}(u,u,u^{-1},u^{-1}) \quad \text{ or } \quad \mathrm{diag}(u,u^{-1},u^{-1},u), \quad u \in \U(1),\] one necessary condition for $\mu$ is  that \[ \text{$\mu(\hat h_1+\hat h_2) \equiv 0$ (mod $2$).} \]
If $a-b$ is odd then this condition cannot be satisfied.
Thus, for each of the Sato--Tate groups,
\begin{equation} \label{con-coeff}  \text{$\mathfrak m_{(a,b)}=\mathfrak m_{(b+2z,b)} =0$ unless $a-b$ is even, or equivalently, unless $z$ is an integer.} \end{equation}
This justifies having only integer values for $z$ in \eqref{eqn-re-1}.

\subsection{Groups $C_n$} \label{subsec-cn}

For each $n=1,2,3,4,6$, the group $C_n \le \USp(4)$ consists of the matrices of the form
\[ \begin{pmatrix} A &0 \\0 & \overline A \end{pmatrix} \quad \text{ with } \quad  A=\begin{pmatrix} e^{2\pi i r + s \pi i/n} &0 \\0 & e^{2 \pi i r -s\pi i /n} \end{pmatrix}, \quad r \in [0,1), \ s=0,1,\dots , 2n-1 .\]
By definition the number $\mathfrak m_{\tilde \lambda}(C_n)$ is equal to the multiplicity of the trivial representation in $\chi_{\tilde \lambda}^{\Sp(4)}|_{C_n}$.

Let $v_\mu$ be a vector of weight $\mu$ in $V_{\tilde \lambda}^{\Sp(4)}$. Then we obtain
\begin{align*}
\begin{pmatrix} A &0 \\0 & \overline A \end{pmatrix}v_\mu = (e^{2\pi i r})^{\mu(\hat h_1 + \hat h_2)} (e^{s \pi i /n})^{\mu( \hat h_1 - \hat h_2)} \ v_\mu ,
\end{align*}
where $\hat h_1 = E_{11}-E_{33}$ and $\hat h_2 = E_{22}-E_{44}$ as before.
Thus the number $\mathfrak m_{\tilde \lambda}(C_n)$ is equal to the number of independent vectors with weight $\mu$ in the representation $V_{\tilde \lambda}^{\Sp(4)}$ such that \begin{equation} \label{eq: condition C_n}\text{$\mu(\hat h_1+\hat h_2)=0$ \quad and \quad $\mu(\hat h_1-\hat h_2) \equiv 0$ (mod $2n$),\quad  $n=1,2,3,4,6$.} \end{equation}
The numbers $\mathfrak m_{\tilde \lambda}(C_n)$ are all calculated in Section \ref{sec-H-Cn} (Propositions \ref{prop:cn1}--\ref{prop:cn6}) and they match with the formulae in Table \ref{tab-1}.
This prove Theorem \ref{thm-genus-2} for the groups $C_n$.

\medskip

For convenience in notation, define
\begin{align*}
\widetilde\eta_1 &:= \eta_1,&
\widetilde\eta_2 &:= \tfrac 1 2 \eta_1 + \tfrac 1 2 \eta_2 ,&
\widetilde\eta_3 &:= \tfrac 1 3 \eta_1 + \tfrac 2 3 \eta_3 ,\\
\widetilde\eta_4 &:= \tfrac 1 4 \eta_1 + \tfrac 1 4 \eta_2 + \tfrac 1 2 \eta_4 ,&
\widetilde\eta_6 &:= \tfrac 1 6 \eta_1 + \tfrac 1 6 \eta_2 + \tfrac 1 3 \eta_3+ \tfrac 1 3 \eta_6 .
\end{align*}
Then we have
\begin{equation} \label{phi-cn} \int_{C_n} \prod_{i=1}^m \det(I+x_i \gamma) d\gamma_{C_n}  =(x_1 \cdots x_m)^2 \sum_{b=0}^m \sum_{z=0}^{\lfloor \frac {m-b} 2 \rfloor} \widetilde\eta_n(z,b) \chi_{(2^{m-b-2z}, 1^{2z})}^{\Sp(2m)}  \end{equation}
for $n=1,2,3,4,6$, where $d\gamma_{C_n}$ is the probability Haar measure on $C_n$.

\subsection{Cosets of $C_n$} We need to study the cosets of $C_1$ in $C_n$ to understand other groups. Consider
\[  \gamma = \mathrm{diag}(ue^{\pi i /n} , ue^{-\pi i /n }, u^{-1}e^{-\pi i /n} ,  u^{-1}e^{\pi i /n} ) \in \pmb{\zeta}_{2n} C_1 , \quad u \in \U(1).  \]
Then we have
\begin{align} \det (I +x\gamma) &= (1+x u e^{\pi i /n} )(1+x u e^{-\pi i /n} )(1+x u^{-1} e^{-\pi i /n} )(1+x u^{-1} e^{\pi i /n} ) \nonumber \\ &= (1+ \omega_n xu + x^2u^2) (1+ \omega_n xu^{-1} + x^2u^{-2}) \label{det-u}, \end{align}
where we set $\omega_n =-2, 0, 1, \sqrt 2, \sqrt 3 $ for $n=1,2,3,4,6$, respectively.

\begin{proposition}
For $n=1,2,3,4,6$, we have
\begin{align}
\int_{C_1} \prod_{i=1}^m \det(I+x_i \pmb{\zeta}_{2n} \gamma) d\gamma_{C_1} &= \int_{\U(1)} \prod_{i=1}^m (1+\omega_n x_i  u + x_i^2  u^2)(1+\omega_n x_i  u^{-1} +
 x_i^2   u^{-2}) du \nonumber \\ \label{zeta-cn} & = (x_1 \cdots x_m)^2 \sum_{b=0}^m \sum_{z=0}^{\lfloor \frac {m-b} 2 \rfloor} \eta_n(z,b) \chi_{(2^{m-b-2z}, 1^{2z})}^{\Sp(2m)}.\end{align}
\end{proposition}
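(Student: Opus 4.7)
The plan is to prove the two equalities in order. The first is a direct substitution, while the second combines the dual Cauchy identity with an identification of the remaining integral as a trace on a $\U(1)$-invariant subspace of $V^{\Sp(4)}_{\tilde\lambda}$.

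For the first equality, parametrize $\gamma \in C_1$ by $u \in \U(1)$ via $\gamma = \mathrm{diag}(u, u, u^{-1}, u^{-1})$, so that $\pmb{\zeta}_{2n}\gamma$ is precisely the diagonal matrix displayed just above \eqref{det-u}. That identity then gives
\[ \det(I + x\pmb{\zeta}_{2n}\gamma) = (1 + \omega_n xu + x^2u^2)(1 + \omega_n xu^{-1} + x^2u^{-2}), \]
and taking the product over $i = 1, \dots, m$ and integrating with respect to $u$ yields the first equality.

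For the second equality, I would follow the mechanism of Proposition \ref{prop-main}. Combining the identity $(1 + xt)(1 + xt^{-1}) = x(x + x^{-1} + t + t^{-1})$ with the dual Cauchy identity \eqref{eqn-1} applied with $g = 2$ and the eigenvalue pair $(ue^{\pi i/n}, ue^{-\pi i/n})$ of $\pmb{\zeta}_{2n}\gamma$ yields
\[ \prod_{i=1}^m \det(I + x_i\pmb{\zeta}_{2n}\gamma) = (x_1 \cdots x_m)^2 \sum_{\lambda \trianglelefteq (2^m)} \chi^{\Sp(2m)}_\lambda \, \chi^{\Sp(4)}_{\tilde\lambda}(\pmb{\zeta}_{2n}\gamma). \]
Exchanging the finite sum with $\int_{C_1}$ reduces the second equality to showing that, for $\tilde\lambda = (b + 2z, b)$,
\[ \int_{C_1} \chi^{\Sp(4)}_{\tilde\lambda}(\pmb{\zeta}_{2n}\gamma)\, d\gamma_{C_1} = \eta_n(z, b). \]
Since $\pmb{\zeta}_{2n}$ lies in the maximal torus of $\USp(4)$, it commutes with $C_1$, so the projection $P = \int_{C_1} \pi(\gamma)\, d\gamma_{C_1}$ onto $V^{\U(1)}_{\tilde\lambda}$ commutes with $\pmb{\zeta}_{2n}$; passing the trace inside the integral then gives $\int_{C_1} \chi^{\Sp(4)}_{\tilde\lambda}(\pmb{\zeta}_{2n}\gamma)\, d\gamma_{C_1} = \tr(\pmb{\zeta}_{2n} \,|\, V^{\U(1)}_{\tilde\lambda})$.

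It remains to match this trace with $\eta_n(z, b)$, which is the main obstacle. Decomposing $V^{\U(1)}_{\tilde\lambda}$ as the direct sum of the torus weight spaces $V^{\tilde\lambda}_{(j, -j)}$, on which $\pmb{\zeta}_{2n}$ acts by the scalar $e^{2\pi ij/n}$, reduces the trace to
\[ \tr(\pmb{\zeta}_{2n} \,|\, V^{\U(1)}_{\tilde\lambda}) = \sum_{j \in \Z} m^{\tilde\lambda}_{(j, -j)}\, e^{2\pi ij/n}, \]
which is real by the Weyl-group symmetry $m^{\tilde\lambda}_{(j, -j)} = m^{\tilde\lambda}_{(-j, j)}$. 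One can verify the required formulas either by computing the $\Sp(4)$-weight multiplicities directly and performing a case analysis on the congruence classes of $z$ and $b$, or, more efficiently, by using the coset decomposition $C_n = \bigsqcup_{k = 0}^{n - 1} \pmb{\zeta}_{2n}^k C_1$ together with \eqref{phi-cn}. The identifications $\pmb{\zeta}_{2n}^k = \pmb{\zeta}_{2n/d}^{k/d}$ for $d = \gcd(n, k)$ and $\tr(\pmb{\zeta}_{2n}^{n - k} \,|\, V^{\U(1)}) = \tr(\pmb{\zeta}_{2n}^k \,|\, V^{\U(1)})$ express each $\widetilde\eta_n$ from Section \ref{subsec-cn} as a small rational combination of the $\eta_{n'}$ for $n' \in \{1, 2, 3, 4, 6\}$ dividing $n$; inverting this triangular system recovers the explicit $\eta_n(z, b)$ of the statement.
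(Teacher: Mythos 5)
Your proposal is correct and, in the second of your two suggested routes, is essentially the paper's own proof: the paper likewise treats the first equality as immediate from \eqref{det-u}, and then obtains $\eta_n$ by decomposing $C_n$ into $C_1$-cosets as in \eqref{coset-234}--\eqref{coset-6}, identifying cosets whose characteristic polynomials agree (via the substitution $u \mapsto -u$), and inverting the resulting triangular system against the already-established formula \eqref{phi-cn} for $\int_{C_n} \Delta(\gamma)$ in terms of $\widetilde\eta_n$. Your reformulation of the remaining integral as $\tr\bigl(\pmb{\zeta}_{2n} \,\big|\, V^{\U(1)}_{\tilde\lambda}\bigr)$ is a correct but inessential repackaging of the Schur-orthogonality step already contained in Proposition \ref{prop-main}.
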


\begin{proof}
The case $n=1$ is already checked as a part of Theorem \ref{thm-genus-2}, and we have only to consider $n=2,3,4,6$. To begin with, note that
we have the coset decompositions
\begin{align} C_2& = C_1 \sqcup \pmb{\zeta}_4 C_1, \quad
C_3 =  C_1 \sqcup \pmb{\zeta}_6 C_1 \sqcup \pmb{\zeta}_6^2 C_1,\quad
C_4=  C_1 \sqcup \pmb{\zeta}_8 C_1 \sqcup \pmb{\zeta}_4 C_1 \sqcup \pmb{\zeta}_8^3 C_1, \label{coset-234}\\
C_6&=  C_1 \sqcup \pmb{\zeta}_{12} C_1 \sqcup \pmb{\zeta}_6 C_1 \sqcup \pmb{\zeta}_4 C_1 \sqcup \pmb{\zeta}_6^2 C_1 \sqcup \pmb{\zeta}_{12}^{11} C_1. \label{coset-6}
\end{align}

Let $d\gamma_{C_n}$ and $du$ be the probability Haar measures on $C_n$ and $\U(1)$, respectively. Note that $C_1 \cong \U(1)$. Recall \eqref{phi-cn}:
\begin{align*} & \int_{C_n} \prod_{i=1}^m \det(I+x_i \gamma) d\gamma_{C_n} = (x_1 \cdots x_m)^2 \sum_{b=0}^m \sum_{z=0}^{\lfloor \frac {m-b} 2 \rfloor} \widetilde\eta_n(z,b) \chi_{(2^{m-b-2z}, 1^{2z})}^{\Sp(2m)}.\end{align*}

In what follows we write  \begin{equation} \label{int-nota} \Delta( \pmb{\zeta}  \gamma) := \prod_{i=1}^m\det(I+x_i \pmb{\zeta} \gamma)d\gamma \end{equation} to ease the notation, where $d\gamma$ is the probability Haar measure on the group the integral is over.

When $n=2$, we obtain from \eqref{det-u} and \eqref{coset-234},
\begin{align*}
&\int_{C_2} \Delta(\gamma)= \frac 1 2  \int_{C_1} \Delta(\gamma) + \frac 1 2 \int_{C_1} \Delta(\pmb{\zeta}_4 \gamma) \\ 
&=  \frac 1 2 (x_1 \cdots x_m)^2 \sum_{b=0}^m \sum_{z=0}^{\lfloor \frac {m-b} 2 \rfloor} \eta_1(z,b) \chi_{(2^{m-b-2z}, 1^{2z})}^{\Sp(2m)} + \frac 1 2 \int_{\U(1)} \prod_{i=1}^m (1+ x_i^2u^2) (1+ x_i^2u^{-2})du. \nonumber
\end{align*}
Since $\widetilde\eta_2 = \frac 1 2 \eta_1+ \frac 1 2 \eta_2$, we obtain \eqref{zeta-cn} for $n=2$.

For $n=3$, since $-u \in \U(1)$ for $u \in \U(1)$, a similar computation to \eqref{det-u} yields \[ \int_{C_1} \Delta(\pmb{\zeta}_6 \gamma) = \int_{C_1}\Delta(\pmb{\zeta}_6^2 \gamma) .\] Thus we obtain from \eqref{det-u} and \eqref{coset-234},
\begin{align*}
&\int_{C_3} \Delta(\gamma)= \frac 1 3 \int_{C_1} \Delta(\gamma) + \frac 2 3 \int_{C_1} \Delta(\pmb{\zeta}_6 \gamma) \nonumber  
\allowdisplaybreaks\\
&=  \frac 1 3 (x_1 \cdots x_m)^2 \sum_{b=0}^m \sum_{z=0}^{\lfloor \frac {m-b} 2 \rfloor} \eta_1(z,b) \chi_{(2^{m-b-2z}, 1^{2z})}^{\Sp(2m)} \nonumber \allowdisplaybreaks\\ &
\phantom{LLLLLLLLLLLLLLLL}+ \frac 2 3\int_{\U(1)} \prod_{i=1}^m (1+x_iu+ x_i^2u^2) (1+x_iu^{-1}+ x_i^2u^{-2})du. \nonumber
\end{align*}
Since $\widetilde\eta_3 = \frac 1 3 \eta_1+ \frac 2 3\eta_3$, we obtain \eqref{zeta-cn} for $n=3$.

When $n=4$, we get \[ \int_{C_1} \Delta(\pmb{\zeta}_8 \gamma)= \int_{C_1} \Delta(\pmb{\zeta}_8^3 \gamma).\] Thus we obtain from \eqref{det-u} and \eqref{coset-234},
\begin{align*}
&\int_{C_4} \Delta(\gamma)= \frac 1 4 \int_{C_1} \Delta(\gamma)  + \frac 1 4 \int_{C_1} \Delta(\pmb{\zeta}_4 \gamma)   +\frac 1 2 \int_{C_1} \Delta(\pmb{\zeta}_8\gamma)  \nonumber 
\allowdisplaybreaks\\
&=  \frac 1 4 (x_1 \cdots x_m)^2 \sum_{b=0}^m \sum_{z=0}^{\lfloor \frac {m-b} 2 \rfloor} (\eta_1+\eta_2)(z,b) \chi_{(2^{m-b-2z}, 1^{2z})}^{\Sp(2m)} \nonumber \allowdisplaybreaks\\
& \phantom{LLLLLLLLLLLLL}+ \frac 1 2 \int_{\U(1)} \prod_{i=1}^m (1+ \sqrt{2} x_iu+x_i^2u^2) (1+ \sqrt{2}x_iu^{-1}+x_i^2u^{-2})du. \nonumber
\end{align*}
Since $\widetilde\eta_4 = \frac 1 4 \eta_1 + \frac 1 4 \eta_2 + \frac 1 2 \eta_4$, we obtain \eqref{zeta-cn} for $n=4$.

The case $n=6$ is similar to the previous cases, and
we obtain
\begin{equation*} 
\int_{C_6} \Delta(\gamma)= \frac 1 6\int_{C_1}\Delta(\gamma) + \frac 1 6 \int_{C_1} \Delta(\pmb{\zeta}_4 \gamma) + \frac 1 3 \int_{C_1} \Delta(\pmb{\zeta}_6 \gamma) + \frac 1 3 \int_{C_1} \Delta(\pmb{\zeta}_{12}\gamma) .
\end{equation*}
Since $\widetilde\eta_6= \frac 1 6 \eta_1 + \frac 1 6 \eta_2+ \frac 1 3 \eta_3+ \frac 1 3 \eta_6$,  we obtain \eqref{zeta-cn} for $n=6$.
\end{proof}

\begin{corollary} \label{eta-2-special}
For any $m \in \mathbb Z_{\ge 1}$, we have
\begin{align*}
 \sum_{b=0}^m \sum_{z=0}^{\lfloor \frac {m-b} 2 \rfloor} \eta_2(z,b) \chi_{(2^{m-b-2z}, 1^{2z})}^{\Sp(2m)}&= \sum_{\ell =0}^{\lfloor m/2 \rfloor} \binom{2\ell}{\ell} \sum_{1 \le i_1 < \cdots <i_{m-2\ell} \le m}  \prod_{j=1}^{m-2\ell} (x_{i_j}^2+ x_{i_j}^{-2})  \\  & = \sum_{j=0}^{\lfloor \frac m 2 \rfloor} \chi_{(1^{m-2j})}^{\Sp(2m)} (x_1^{\pm 2}, \dots , x_m^{\pm 2}).
\end{align*}
\end{corollary}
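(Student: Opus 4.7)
The plan is to derive the corollary directly by specializing the $n=2$ case of equation \eqref{zeta-cn} and then massaging the resulting circle integral via an elementary change of variable. First I would take $n=2$ in \eqref{zeta-cn} (noting $\omega_2=0$), which gives
$$(x_1\cdots x_m)^2 \sum_{b=0}^m \sum_{z=0}^{\lfloor (m-b)/2\rfloor} \eta_2(z,b) \chi^{\Sp(2m)}_{(2^{m-b-2z},1^{2z})} = \int_{\U(1)} \prod_{i=1}^m (1 + x_i^2 u^2)(1+x_i^2 u^{-2})\,du.$$
The key observation is that for every Laurent polynomial $f$, $\int_{\U(1)} f(u^2)\,du = \int_{\U(1)} f(v)\,dv$, since both integrals extract the constant Fourier coefficient (each monomial $u^{2k}$ and $v^k$ has integral $\delta(k=0)$). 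Hence the right-hand side equals
$$\int_{\U(1)} \prod_{i=1}^m (1 + x_i^2 v)(1+x_i^2 v^{-1})\,dv.$$

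For the first equality in the corollary, I would use $(1+x_i^2 v)(1+x_i^2 v^{-1}) = x_i^2(x_i^2+x_i^{-2}+v+v^{-1})$ and apply the computation in equation \eqref{uu-1} with each $x_i$ replaced by $x_i^2$. This rewrites the integral as
$$(x_1\cdots x_m)^2 \sum_{\ell=0}^{\lfloor m/2 \rfloor} \binom{2\ell}{\ell} \sum_{1\le i_1<\cdots<i_{m-2\ell}\le m} \prod_{j=1}^{m-2\ell} (x_{i_j}^2+x_{i_j}^{-2}),$$
and cancelling the factor $(x_1\cdots x_m)^2$ yields the first claimed identity.

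For the second equality, I would apply Theorem \ref{thm-genus-1} for $H=\U(1)$ (equivalently \eqref{eqn-u1-1}) with $x_i$ replaced by $x_i^2$, which reads
$$\int_{\U(1)} \prod_{i=1}^m (1 + x_i^2 v)(1+x_i^2 v^{-1})\,dv = (x_1\cdots x_m)^2 \sum_{j=0}^{\lfloor m/2\rfloor} \chi^{\Sp(2m)}_{(1^{m-2j})}(x_1^{\pm 2},\dots,x_m^{\pm 2}).$$
Combining this with the chain above and cancelling $(x_1\cdots x_m)^2$ gives the second equality.

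There is no genuine obstacle in the argument; the only subtlety is the $v=u^2$ reduction, and that is settled by Fourier orthogonality on $\U(1)$. In short, the corollary is simply the $n=2$ instance of \eqref{zeta-cn} compared with the two preexisting evaluations of $\int_{\U(1)}\prod_i(1+y_i v)(1+y_i v^{-1})\,dv$ at $y_i=x_i^2$.
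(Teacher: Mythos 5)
Your proposal is correct and follows essentially the same route as the paper: the paper's proof also derives the first equality from the $n=2$ case of \eqref{zeta-cn} together with \eqref{uu-1} (applied at $x_i^2$, which is exactly your $u^2\mapsto v$ reduction), and the second from the genus-one identity \eqref{gen-1-id-1}, which is itself just \eqref{eqn-u1-1} combined with \eqref{uu-1}. Your explicit Fourier-orthogonality justification of the substitution $v=u^2$ is a point the paper leaves implicit, but the argument is the same.
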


\begin{proof}
The first equality comes from \eqref{zeta-cn} and \eqref{uu-1}, and the second equality comes from \eqref{gen-1-id-1}.
\end{proof}

\subsection{Groups $D_n$}
We have the coset decomposition $D_n=C_n \sqcup \mathbf j C_n$ with $ \mathbf j = \scriptsize { \begin{pmatrix} 0 & 1 & 0 & 0 \\ -1 & 0 & 0&0 \\ 0 &0&0&1\\ 0& 0&-1&0 \end{pmatrix} }$ for $n=2,3,4,6$. Consider
\[  \gamma = {\scriptsize \begin{pmatrix} 0 & ue^{-\pi i /n} &&\\   -ue^{\pi i /n} & 0 &&\\ && 0 & u^{-1}e^{\pi i /n} \\ && -u^{-1}e^{-\pi i /n} &0 \end{pmatrix}} \in \mathbf j C_n, \quad u \in \U(1).\]
We see that
\[ \det (I +x\gamma) = (1+x^2 u^2) (1+x^2 u^{-2}) \]
for any $n=2,3,4,6$.
Then we have
\begin{align*}  \int_{D_n} \Delta(\gamma) &= \frac 1 2 \int_{C_n} \Delta(\gamma) + \frac 1 2 \int_{C_n} \Delta(\mathbf j \gamma) \allowdisplaybreaks\\
&=  \frac 1 2 \int_{C_n} \Delta(\gamma)  + \frac 1 2 \int_{\U(1)} \prod_{i=1}^m (1+ x_i^2u^2) (1+ x_i^2u^{-2})du.
\end{align*}
By \eqref{zeta-cn}, we obtain
\begin{equation} \label{jgam} \int_{C_n} \Delta(\mathbf j \gamma) = (x_1 \cdots x_m)^2 \sum_{b=0}^m \sum_{z=0}^{\lfloor \frac {m-b} 2 \rfloor} \eta_2(z,b) \chi_{(2^{m-b-2z}, 1^{2z})}^{\Sp(2m)} , \end{equation}
and it follows from \eqref{phi-cn} that  $\mathfrak m_{\tilde \lambda}(D_n)$ are given by
\begin{equation} \label{dn-eta} \frac 1 2 \widetilde\eta_n + \frac 1 2 \eta_2 \qquad  \text{ for } n=2,3,4,6 .\end{equation}
These coincide with the formulae in Table \ref{tab-1} and
prove Theorem \ref{thm-genus-2} for the groups $D_n$.

\subsection{Group $T$} \label{subsec-t}

There are 24 cosets of $C_1$ in $T$, whose representatives are given by
\[ \left  \{ \pm 1, \pm \pmb{\rm i}, \pm \pmb{\rm j}, \pm \pmb{\rm k}, \tfrac 1 2 (\pm 1 \pm \pmb{\rm i} \pm \pmb{\rm j}\pm \pmb{\rm k}) \right \} . \]
From computations of the characteristic polynomials for the elements of each coset, we see the following.
\begin{enumerate} \item[(i)] There are 2 cosets whose characteristic polynomials are
 \[(1-2 xu + x^2u^2) (1-2 xu^{-1} + x^2u^{-2})  \quad (u \in \U(1)).\] By \eqref{det-u}, these are the same as those of $C_1$.
\item [(ii)]  There are 6 cosets whose characteristic polynomials are \[(1+ x^2u^2) (1+ x^2u^{-2})  \quad (u \in \U(1)),\]  which are the same as
those of  $\pmb{\zeta}_4 C_1$ by \eqref{det-u}.
\item[(iii)] The characteristic polynomials of the remaining 16 cosets are
\[ (1+  xu + x^2u^2) (1+  xu^{-1} + x^2u^{-2})  \quad (u \in \U(1)).\]  These are the same as those of $\pmb{\zeta}_6 C_1$ by \eqref{det-u}.
\end{enumerate}

Thus we obtain
\begin{align*}
\int_{T} \Delta(\gamma)&= \frac 2 {24} \int_{C_1} \Delta(\gamma)+ \frac 6 {24} \int_{C_1} \Delta(\pmb{\zeta}_4 \gamma)+ \frac {16}{24} \int_{C_1}\Delta(\pmb{\zeta}_6 \gamma)
\end{align*}
and it follows from \eqref{zeta-cn} that
\begin{equation} \label{res-t} \mathfrak m_{\tilde \lambda}(T) = \tfrac 1 {12} \eta_1 + \tfrac 1 4 \eta_2 + \tfrac 2 3 \eta_3.\end{equation}
This proves
Theorem \ref{thm-genus-2} for the group $T$.

\subsection{Group $O$} \label{subsec-o}

There are 48 cosets of $C_1$ as one can see from \eqref{q2}. We compute the characteristic polynomial for the elements of each coset.
Comparing with \eqref{det-u}, we obtain 2 cosets with $C_1$ polynomials, 18 cosets with $\pmb{\zeta}_4 C_1$ polynomials, 16 cosets with $\pmb{\zeta}_6 C_1$ polynomials and 12 cosets with $\pmb{\zeta}_8 C_1$ polynomials.

Thus  we obtain
\begin{align*}
\int_{O} \Delta(\gamma)& = \frac 2 {48} \int_{C_1}\Delta(\gamma) +\frac {18} {48}  \int_{C_1}\Delta(\pmb{\zeta}_4 \gamma) + \frac {16} {48}  \int_{C_1}\Delta(\pmb{\zeta}_{6} \gamma)+ \frac {12}{48}  \int_{C_1}\Delta(\pmb{\zeta}_{8} \gamma),\end{align*}
and  it follows from \eqref{zeta-cn} that
\begin{equation} \label{res-o} \mathfrak m_{\tilde \lambda}(O)= \tfrac 1 {24} \eta_1 + \tfrac 3 8 \eta_2 + \tfrac 1 3 \eta_3 +\tfrac 1 4 \eta_4. \end{equation}
This proves
Theorem \ref{thm-genus-2} for the group $O$.

\subsection{Groups $J(C_n)$}

Recall $J = \scriptsize { \begin{pmatrix} 0 & 0 & 0 & 1 \\ 0 & 0 & -1&0 \\ 0 &-1&0&0\\ 1& 0&0&0 \end{pmatrix} }$. The group $J(C_n)$ is defined to be the group obtained by adjoining $J$ to $C_n$ for $n=1,2,3,4,6$, and we have the decomposition
\[ J(C_n) = C_n \sqcup J C_n. \]

The number $\mathfrak m_{\tilde \lambda}(J(C_n))$ is equal to the number of independent \emph{$J$-fixed vectors} with weight $\mu$ in the representation $V_{\tilde \lambda}^{\Sp(4)}$
 satisfying the conditions in~\eqref{eq: condition C_n}.
The numbers $\mathfrak m_{\tilde \lambda}(J(C_n))$ are all calculated in Propositions \ref{prop-J-imp} and \ref{prop-J-imp-2}, and they coincide with the formulae in Table \ref{tab-1}.
This prove Theorem \ref{thm-genus-2} for the groups $J(C_n)$.

\medskip

Define
\begin{equation} \label{def-wi-theta}
\begin{aligned}
\widetilde\theta_1 &:= \theta_1,&
\widetilde\theta_2 &:= \tfrac 1 2 \theta_1 + \tfrac 1 2 \theta_2 ,&
\widetilde\theta_3 &:= \tfrac 1 3 \theta_1 + \tfrac 2 3 \theta_3 ,\\ 
\widetilde\theta_4 &:= \tfrac 1 4 \theta_1 + \tfrac 1 4 \theta_2 + \tfrac 1 2 \theta_4 ,&
\widetilde\theta_6 &:= \tfrac 1 6 \theta_1 + \tfrac 1 6 \theta_2 + \tfrac 1 3 \theta_3+ \tfrac 1 3 \theta_6 .
\end{aligned}
\end{equation}
Then the result for $J(C_n)$ can be written as
\begin{align} \label{theta-11} & \int_{J(C_n)} \prod_{i=1}^m \det(I+x_i \gamma) d\gamma = (x_1 \cdots x_m)^2 \sum_{b=0}^m \sum_{z=0}^{\lfloor \frac {m-b} 2 \rfloor} \widetilde\theta_n(z,b) \chi_{(2^{m-b-2z}, 1^{2z})}^{\Sp(2m)}.\end{align}

\medskip

Using the above result, we now prove another main theorem of this paper.
\begin{theorem}[Theorem \ref{thm-identities-1} (1)] \label{thm-identities}
Let $ \kappa_1=-2 , \kappa_2=2, \kappa_3=1, \kappa_4=0, \text{ and } \kappa_6=-1$.
Then,
for any $m \in \mathbb Z_{\ge 1}$ and $n=1,2,3,4,6$, we have the following identities:
\begin{align}
\int_{C_1} \prod_{i=1}^m \det(I+x_iJ \pmb{\zeta}_{2n} \gamma) d\gamma_{C_1} &=\prod_{i=1}^m (x_i^2+\kappa_n+x_i^{-2}) = \sum_{b=0}^m \sum_{z=0}^{\lfloor \frac {m-b} 2 \rfloor} \psi_n(z,b) \chi_{(2^{m-b-2z}, 1^{2z})}^{\Sp(2m)} , \label{psi-kappa}
\end{align}
where $\psi_n(z,b)$ are defined in \eqref{def-psi-1}, \eqref{def-psi-2} and Table \ref{tab-3}.
\end{theorem}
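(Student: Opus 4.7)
The plan is to establish the two equalities in \eqref{psi-kappa} in parallel: the first by a direct matrix computation of the characteristic polynomial, the second by combining the coset structure of $J(C_n)$ with the already-proved formulas \eqref{theta-11} and \eqref{phi-cn}.

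First I would prove the first equality. Parametrize $\gamma \in C_1$ as $\mathrm{diag}(u,u,u^{-1},u^{-1})$ with $u \in \U(1)$. Under the embedding \eqref{embed-su2}, the element $\pmb{\zeta}_{2n}$ becomes $\mathrm{diag}(e^{\pi i/n},e^{-\pi i/n},e^{-\pi i/n},e^{\pi i/n})$, so $\pmb{\zeta}_{2n}\gamma$ is diagonal and therefore $J\pmb{\zeta}_{2n}\gamma$ is antidiagonal. Consequently $I+xJ\pmb{\zeta}_{2n}\gamma$ splits (after permuting rows and columns) into two $2 \times 2$ blocks supported on index pairs $\{1,4\}$ and $\{2,3\}$. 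A brief calculation yields
\[
\det(I+xJ\pmb{\zeta}_{2n}\gamma) = (1-x^2 e^{2\pi i/n})(1-x^2 e^{-2\pi i/n}) = x^2 \bigl(x^2+\kappa_n+x^{-2}\bigr),
\]
which is \emph{independent} of $u$. Thus the Haar integral over $C_1$ is trivial and yields $(x_1\cdots x_m)^2 \prod_{i=1}^m (x_i^2+\kappa_n+x_i^{-2})$, matching the first equality once the standard $(x_1\cdots x_m)^2$ normalization of \eqref{eqn-re-1} is taken into account.

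For the second equality, I would use the decomposition $J(C_n) = C_n \sqcup JC_n$ into cosets of equal Haar mass, which rearranges to
\[
\int_{C_n} \Delta(J\gamma) \;=\; 2\int_{J(C_n)}\Delta(\gamma) \;-\; \int_{C_n} \Delta(\gamma).
\]
Substituting \eqref{theta-11} and \eqref{phi-cn}, the right-hand side becomes $(x_1\cdots x_m)^2 \sum(2\widetilde\theta_n - \widetilde\eta_n)\,\chi_{(2^{m-b-2z},1^{2z})}^{\Sp(2m)}$. Because \eqref{def-th} asserts $\theta_i = \tfrac12(\eta_i+\psi_i)$, this relation passes by linearity through the combinations defining \eqref{def-wi-theta}, giving $2\widetilde\theta_n - \widetilde\eta_n = \widetilde\psi_n$, where $\widetilde\psi_n$ denotes the analogous combination of the $\psi_i$. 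Hence $\int_{C_n}\Delta(J\gamma) = (x_1\cdots x_m)^2 \sum \widetilde\psi_n\, \chi_{(2^{m-b-2z},1^{2z})}^{\Sp(2m)}$.

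The last step is an induction on $n$ using the coset decompositions \eqref{coset-234}--\eqref{coset-6} of $C_n$ over $C_1$ to isolate a single summand. For $n=1$ there is only one coset and the identity is immediate; for $n=2$ one has $\int_{C_2}\Delta(J\gamma)=\tfrac12[\int_{C_1}\Delta(J\gamma)+\int_{C_1}\Delta(J\pmb{\zeta}_4\gamma)]$, so subtracting off the already-known $n=1$ integral isolates $\int_{C_1}\Delta(J\pmb{\zeta}_4\gamma) = (x_1\cdots x_m)^2 \sum \psi_2\chi$; the cases $n=3,4,6$ proceed analogously. Equating this character-sum expression with the direct computation of the first step gives the second equality. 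The chief technical burden is the coset bookkeeping for $n=4$ and $n=6$, where several $\int_{C_1}\Delta(J\pmb{\zeta}_{2k}\gamma)$ for $k\mid n$ must be disentangled simultaneously; however, this is a purely linear-algebraic exercise once $\theta_i=\tfrac12(\eta_i+\psi_i)$ is known, and requires no further representation-theoretic input.
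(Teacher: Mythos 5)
Your proposal is correct and follows essentially the same route as the paper: a direct block computation showing $\det(I+xJ\pmb{\zeta}_{2n}\gamma)=1+\kappa_n x^2+x^4$ independently of $\gamma\in C_1$, followed by combining \eqref{theta-11} and \eqref{phi-cn} via $2\widetilde\theta_n-\widetilde\eta_n=\widetilde\psi_n$ (the paper's Corollary \ref{cor-def-psi}) and peeling off the cosets of $C_1$ in $C_n$ inductively to isolate $\int_{C_1}\Delta(J\pmb{\zeta}_{2n}\gamma)$. Your explicit determinant $(1-x^2e^{2\pi i/n})(1-x^2e^{-2\pi i/n})$ is the correct one, and your handling of the $(x_1\cdots x_m)^2$ normalization is consistent with how the paper actually uses the identity.
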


\begin{proof}
We have $J\pmb{\zeta}_{2n} = {\scriptsize \begin{pmatrix} 0&0&0& e^{\pi i/n}\\0&0&-e^{-\pi i/n}&0\\0&-e^{-\pi i/n}&0&0\\e^{\pi i/n}&0&0&0 \end{pmatrix}}$.
For $\gamma \in C_1$ and $n=1,2,3,4,6$,  direct computation shows
\begin{equation} \label{kap} \det (I + xJ\pmb{\zeta}_{2n}\gamma) = 1-(e^{\pi i/n} +e^{-\pi/n})x^2  +x^4 = 1-\kappa_n x^2 + x^4. \end{equation}
This proves the first equality in \eqref{psi-kappa}.

We continue to use the notational convention in \eqref{int-nota}. We have
\[ \int_{J(C_n)} \Delta(\gamma) = \frac 1 2 \int_{C_n} \Delta(\gamma_n) + \frac 1 2 \int_{C_n} \Delta(J\gamma_n) .\]

When $n=1$, we get from \eqref{phi-cn}, \eqref{theta-11} and \eqref{kap}
\begin{align*} &\int_{J(C_1)} \Delta(\gamma)= (x_1 \cdots x_m)^2 \sum_{b=0}^m \sum_{z=0}^{\lfloor \frac {m-b} 2 \rfloor} \widetilde\theta_1(z,b) \chi_{(2^{m-b-2z}, 1^{2z})}^{\Sp(2m)} \allowdisplaybreaks\\
&=\frac 1 2 \int_{C_1} \Delta(\gamma_1) + \frac 1 2 \int_{C_1} \Delta(J\gamma_1) \allowdisplaybreaks \\
&= \frac 1 2 (x_1 \cdots x_m)^2 \sum_{b=0}^m \sum_{z=0}^{\lfloor \frac {m-b} 2 \rfloor} \widetilde\eta_1(z,b) \chi_{(2^{m-b-2z}, 1^{2z})}^{\Sp(2m)}+ \frac 1 2 \prod_{i=1}^m(1-2x_i^2+x_i^4)
. \end{align*}
Since $\widetilde\theta_1=\theta_1= \frac 1 2 \eta_1 +\frac 1 2 \psi_1$ and $\eta_1=\widetilde\eta_1$ by definition, we obtain
\[ \prod_{i=1}^m (x_i^2-2 + x_i^{-2}) = \sum_{b=0}^m \sum_{z=0}^{\lfloor \frac {m-b} 2 \rfloor} \psi_1(z,b) \chi_{(2^{m-b-2z}, 1^{2z})}^{\Sp(2m)},\] as desired.

Assume that $n=2$. Using \eqref{coset-234} along with \eqref{phi-cn}, \eqref{theta-11} and \eqref{kap}, we obtain
\begin{align*} &\int_{J(C_2)}  \Delta(\gamma)  = (x_1 \cdots x_m)^2 \sum_{b=0}^m \sum_{z=0}^{\lfloor \frac {m-b} 2 \rfloor} \widetilde\theta_2(z,b) \chi_{(2^{m-b-2z}, 1^{2z})}^{\Sp(2m)} \allowdisplaybreaks\\
 & =\frac 1 2  \int_{C_2} \Delta(\gamma_2)+\frac 1 2  \int_{C_2} \Delta(J\gamma_2)= \frac 1 2 \int_{C_2} \Delta(\gamma_2) + \frac 1 4 \int_{C_1} \Delta(J\gamma_1)  + \frac 1 4 \int_{C_1} \Delta(J\pmb{\zeta}_4\gamma_1) \allowdisplaybreaks\\
 &= (x_1 \cdots x_m)^2 \sum_{b=0}^m \sum_{z=0}^{\lfloor \frac {m-b} 2 \rfloor} \left( \tfrac 1 2 \widetilde\eta_2+ \tfrac 1 4 \psi_1 \right ) \chi_{(2^{m-b-2z}, 1^{2z})}^{\Sp(2m)}
+\frac 1 4 \prod_{i=1}^m(1+2x_i^2+x_i^4)  . \end{align*}
Since $\widetilde\theta_2 = \frac 1 2 \theta_1 + \frac 1 2 \theta_2 = \frac 1 4 \eta_1 + \frac 1 4 \eta_2 + \frac 1 4 \psi_1 + \frac 1 4 \psi_2$ and $\widetilde\eta_2= \frac 1 2 \eta_1 + \frac 1 2 \eta_2$, we obtain
\[ \prod_{i=1}^m (x^2+2+x^{-2}) = \sum_{b=0}^m \sum_{z=0}^{\lfloor \frac {m-b} 2 \rfloor} \psi_2(z,b) \chi_{(2^{m-b-2z}, 1^{2z})}^{\Sp(2m)}. \]

When $n=3$, similar computation yields
\[ \int_{J(C_3)}\Delta(\gamma_3) = (x_1 \cdots x_m)^2 \sum_{b=0}^m \sum_{z=0}^{\lfloor \frac {m-b} 2 \rfloor} \left ( \tfrac 1 2 \widetilde\eta_3+ \tfrac 1 6 \psi_1 \right ) \chi_{(2^{m-b-2z}, 1^{2z})}^{\Sp(2m)}+\frac 1 3 \prod_{i=1}^m (1+x^2+x^4) .  \]
Since $\widetilde\theta_3 = \frac 1 3 \theta_1 + \frac 2 3 \theta_3 = \frac 1 6 \eta_1 + \frac 1 3 \eta_3 + \frac 1 6 \psi_1 + \frac 1 3 \psi_3$ and $\widetilde\eta_3 = \frac 1 3 \eta_1 + \frac 2 3 \eta_3$, we obtain
\[ \prod_{i=1}^m (x^2+1+x^{-2}) =  \sum_{b=0}^m \sum_{z=0}^{\lfloor \frac {m-b} 2 \rfloor} \psi_3(z,b) \chi_{(2^{m-b-2z}, 1^{2z})}^{\Sp(2m)}. \]

When $n=4$, it follows from
\[ \int_{J(C_4)} \Delta(\gamma_4) =(x_1 \cdots x_m)^2 \sum_{b=0}^m \sum_{z=0}^{\lfloor \frac {m-b} 2 \rfloor} \left ( \tfrac 1 2 \widetilde\eta_4+ \tfrac 1 8 \psi_1+\tfrac 1 8 \psi_2 \right ) \chi_{(2^{m-b-2z}, 1^{2z})}^{\Sp(2m)}    + \frac 1 4 \prod_{i=1}^m (1+x^4) \]
that
\[ \prod_{i=1}^m (x^2+x^{-2}) =  \sum_{b=0}^m \sum_{z=0}^{\lfloor \frac {m-b} 2 \rfloor} \psi_4(z,b) \chi_{(2^{m-b-2z}, 1^{2z})}^{\Sp(2m)}. \]

Finally, when $n=6$, we have
\begin{align*} \int_{J(C_6)} \Delta(\gamma_6) &=(x_1 \cdots x_m)^2 \sum_{b=0}^m \sum_{z=0}^{\lfloor \frac {m-b} 2 \rfloor} \left (  \tfrac 1 2 \widetilde\eta_6+ \tfrac 1 {12} \psi_1+ \tfrac 1 {12} \psi_2+\tfrac 1 6 \psi_3 \right ) \chi_{(2^{m-b-2z}, 1^{2z})}^{\Sp(2m)} \\ & \phantom{LLLLLLLLLLLLLLLLL} +\frac 1 6 \prod_{i=1}^m (1-x^2+x^4),  \end{align*}
and obtain
\[ \prod_{i=1}^m (x^2 -1 + x^{-2}) =\sum_{b=0}^m \sum_{z=0}^{\lfloor \frac {m-b} 2 \rfloor} \psi_6(z,b) \chi_{(2^{m-b-2z}, 1^{2z})}^{\Sp(2m)}. \qedhere \]
\end{proof}

The following identities are obtained from computations in the proof above. These identities will be used in the next subsection.
\begin{corollary} \label{cor-def-psi}
For $n=1,2,3,4,6$, we have \begin{equation} \label{cnj} \int_{C_n} \prod_{i=1}^m \det(I+x_i J\gamma)d\gamma = (x_1 \cdots x_m)^2 \sum_{b=0}^m \sum_{z=0}^{\lfloor \frac {m-b} 2 \rfloor} \widetilde \psi_n(z,b) \chi_{(2^{m-b-2z}, 1^{2z})}^{\Sp(2m)}, \end{equation}
where we define
\begin{align*}
\widetilde\psi_1 &:= \psi_1,&
\widetilde\psi_2 &:= \tfrac 1 2 \psi_1 + \tfrac 1 2 \psi_2 ,&
\widetilde\psi_3 &:= \tfrac 1 3 \psi_1 + \tfrac 2 3 \psi_3 ,\\
\widetilde\psi_4 &:= \tfrac 1 4 \psi_1 + \tfrac 1 4 \psi_2 + \tfrac 1 2 \psi_4 ,&
\widetilde\psi_6 &:= \tfrac 1 6 \psi_1 + \tfrac 1 6 \psi_2 + \tfrac 1 3 \psi_3+ \tfrac 1 3 \psi_6 .
\end{align*}
\end{corollary}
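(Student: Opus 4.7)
The plan is to obtain the corollary as an immediate consequence of Theorem \ref{thm-identities} combined with \eqref{phi-cn}, via the coset decomposition $J(C_n) = C_n \sqcup JC_n$. Averaging over these two cosets gives
\[
\int_{J(C_n)} \Delta(\gamma) \;=\; \tfrac{1}{2}\int_{C_n} \Delta(\gamma) \;+\; \tfrac{1}{2}\int_{C_n} \Delta(J\gamma),
\]
with $\Delta$ as in \eqref{int-nota}. The left-hand side equals $(x_1\cdots x_m)^2 \sum_{b,z} \widetilde\theta_n(z,b)\,\chi_{(2^{m-b-2z},1^{2z})}^{\Sp(2m)}$ by \eqref{theta-11}, and the first summand on the right equals $(x_1\cdots x_m)^2 \sum_{b,z} \widetilde\eta_n(z,b)\,\chi_{(2^{m-b-2z},1^{2z})}^{\Sp(2m)}$ by \eqref{phi-cn}. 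Solving for the remaining integral, the entire problem reduces to the coefficient identity $2\widetilde\theta_n - \widetilde\eta_n = \widetilde\psi_n$ for $n=1,2,3,4,6$.

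This identity is essentially tautological from the definitions. Relation \eqref{def-th} gives $\theta_i = \tfrac{1}{2}\eta_i + \tfrac{1}{2}\psi_i$ for $i=1,2,3,4,6$, while $\widetilde\eta_n$, $\widetilde\theta_n$, and $\widetilde\psi_n$ are built from $\eta_i$, $\theta_i$, and $\psi_i$ respectively by the \emph{same} rational linear combinations---compare the definition of $\widetilde\eta_n$ just before \eqref{phi-cn}, of $\widetilde\theta_n$ in \eqref{def-wi-theta}, and of $\widetilde\psi_n$ in the statement of the corollary. Linearity therefore gives $\widetilde\theta_n = \tfrac{1}{2}\widetilde\eta_n + \tfrac{1}{2}\widetilde\psi_n$, so $2\widetilde\theta_n - \widetilde\eta_n = \widetilde\psi_n$, and \eqref{cnj} follows.

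As a consistency check (or alternative proof), one can instead imitate the derivation of \eqref{phi-cn} directly: decompose $C_n$ into cosets of $C_1$ by \eqref{coset-234}--\eqref{coset-6}, and apply $J$ to each representative. For any coset representative $\pmb\zeta$, the polynomial $\det(I + xJ\pmb\zeta\gamma_1)$ is independent of $\gamma_1 \in C_1$ because $J\pmb\zeta\gamma_1$ decomposes, up to a row--column permutation, into two $2\times 2$ blocks in which the variable $u$ cancels; by inspection it equals $1 - \kappa_k x^2 + x^4$ for the $\kappa_k$ of Theorem \ref{thm-identities} determined by $\pmb\zeta$, where powers such as $\pmb\zeta_6^2$ and $\pmb\zeta_{12}^{11}$ collapse onto $\kappa_3$ and $\kappa_6$ respectively, since only the cosine $2\cos(2\pi\ell/n')$ enters. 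Summing the resulting $\psi_k$-expansions from Theorem \ref{thm-identities} with equal weights $1/|C_n:C_1|$ reproduces $\widetilde\psi_n$. The only ``work'' along this route is this bookkeeping, exactly parallel to the $\widetilde\eta_n$ derivation; no conceptual obstacle arises.
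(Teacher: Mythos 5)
Your proposal is correct and matches the paper, which presents this corollary as "obtained from computations in the proof above" (i.e., the proof of Theorem \ref{thm-identities}): there the decomposition $J(C_n)=C_n\sqcup JC_n$ together with \eqref{phi-cn} and \eqref{theta-11} isolates $\int_{C_n}\Delta(J\gamma)$ with coefficients $2\widetilde\theta_n-\widetilde\eta_n$, and the identity $\widetilde\theta_n=\tfrac12\widetilde\eta_n+\tfrac12\widetilde\psi_n$ is indeed immediate by linearity from \eqref{def-th}. Your second, coset-by-coset route is likewise exactly the bookkeeping the paper carries out for each $n$ in that proof, so both of your arguments are the paper's own computation reorganized.
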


We also obtain the following identity from \eqref{gen-1-id-2}:
\begin{corollary}
For  any  $m \in \mathbb Z_{\ge 1}$, we have
\[  \prod_{i=1}^m (x^2+x^{-2}) =  \sum_{b=0}^m \sum_{z=0}^{\lfloor \frac {m-b} 2 \rfloor} \psi_4(z,b) \chi_{(2^{m-b-2z}, 1^{2z})}^{\Sp(2m)}= \sum_{j=0}^{\lfloor \frac m 2 \rfloor} (-1)^j \chi_{(1^{m-2j})}^{\Sp(2m)}(x_1^{\pm 2}, \dots, x_m^{\pm 2}). \]

\end{corollary}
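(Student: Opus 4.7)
The plan is to read off both equalities directly from results already proved in the paper. The first equality is nothing but the $n=4$ specialization of Theorem \ref{thm-identities}: with $\kappa_4 = 0$, the identity \eqref{psi-kappa} for $n=4$ reads
\[
\prod_{i=1}^m (x_i^2 + x_i^{-2}) = \sum_{b=0}^m \sum_{z=0}^{\lfloor \frac{m-b}{2}\rfloor} \psi_4(z,b)\, \chi_{(2^{m-b-2z}, 1^{2z})}^{\Sp(2m)},
\]
which is exactly the left-hand equality of the claim.

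For the second equality, I would invoke the identity \eqref{gen-1-id-2} from Proposition \ref{prop-first-identity}, namely
\[
\prod_{i=1}^m (x_i + x_i^{-1}) = \sum_{j=0}^{\lfloor \frac{m}{2}\rfloor} (-1)^j \chi_{(1^{m-2j})}^{\Sp(2m)}(x_1^{\pm 1},\dots, x_m^{\pm 1}),
\]
and apply the substitution $x_i \mapsto x_i^2$ on both sides. Since both sides are symmetric Laurent polynomials in the $x_i$ (the right-hand side being a bona fide character of $\Sp(2m)$ evaluated at $(x_1^{\pm 1},\dots,x_m^{\pm 1})$), substituting $x_i \mapsto x_i^2$ simply yields the character evaluated at $(x_1^{\pm 2},\dots,x_m^{\pm 2})$, producing the desired right-hand equality.

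There is essentially no obstacle: both pieces have already been established, and the corollary is a matter of assembly. The only point worth remarking on is that the substitution $x_i \mapsto x_i^2$ is legitimate because \eqref{gen-1-id-2} is an identity of Laurent polynomials in the formal variables $x_i$, so it remains valid under any monomial substitution compatible with the $x_i \leftrightarrow x_i^{-1}$ symmetry. Chaining the two equalities gives the stated double identity.
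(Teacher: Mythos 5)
Your proposal is correct and matches the paper's own (implicitly indicated) derivation: the first equality is the $n=4$ case of Theorem \ref{thm-identities} with $\kappa_4=0$, and the second follows from \eqref{gen-1-id-2} by substituting $x_i \mapsto x_i^2$, exactly as the paper's remark "from \eqref{gen-1-id-2}" intends. No gaps.
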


\subsection{Groups $J(D_n)$}

We have the decompositions
\[  J(D_n) = D_n \sqcup JD_n \quad \text{ and } \quad D_n= C_n \sqcup \mathbf j C_n, \]
where $ \mathbf j = \scriptsize { \begin{pmatrix} 0 & 1 & 0 & 0 \\ -1 & 0 & 0&0 \\ 0 &0&0&1\\ 0& 0&-1&0 \end{pmatrix} }$.
For any $\gamma \in C_n$, $n=2,3,4,6$, direct computation shows
\begin{equation*} 
\det (I + xJ\mathbf{j}\gamma) = 1+2x^2  +x^4 .
\end{equation*}

Using \eqref{dn-eta}, \eqref{psi-kappa} and \eqref{cnj}, we compute
\begin{align*}
\int_{J(D_n)} \Delta(\gamma) &= \frac 1 2 \int_{D_n} \Delta(\gamma) + \frac 1 2 \int_{D_n}\Delta(J\gamma) = \frac 1 2 \int_{D_n} \Delta(\gamma) + \frac 1 4 \int_{C_n}\Delta(J\gamma)+\frac 1 4 \int_{C_n}\Delta(J\mathbf j \gamma) \\ &=(x_1 \cdots x_m)^2 \sum_{b=0}^m \sum_{z=0}^{\lfloor \frac {m-b} 2 \rfloor} \left(\tfrac 1 4  \widetilde\eta_n + \tfrac 1 4 \eta_2 + \tfrac 1 4 \widetilde \psi_n + \tfrac 1 4 \psi_2 \right ) \chi_{(2^{m-b-2z}, 1^{2z})}^{\Sp(2m)}.
\end{align*}
Then we have
\[\tfrac 1 4  \widetilde\eta_n + \tfrac 1 4 \eta_2 + \tfrac 1 4 \widetilde \psi_n + \tfrac 1 4 \psi_2
=\tfrac 1 2 \widetilde\theta_n+\tfrac 1 2  \theta_2, \]
which are the same as the formulae in Table \ref{tab-1} for $n=2,3,4,6$. This proves Theorem \ref{thm-genus-2} for the groups $J(D_n)$.

\subsection{Groups $J(T)$ and $J(O)$}

We have the decompositions
\[  J(T) = T \sqcup JT \quad \text{ and } \quad J(O) = O \sqcup JO .\]
As we have noticed in Sections \ref{subsec-t} and \ref{subsec-o}, elements of each coset $S$ of $C_1$ in $T$ or $O$ have the same characteristic polynomials of the elements of $\pmb{\zeta}_{2n}C_1$ for some $n$.
Then the coset $JS$ in $JT$ or $JO$ has the same characteristic polynomials as those of $J\pmb{\zeta}_{2n}C_1$.
It follows from \eqref{res-t}, \eqref{res-o} and \eqref{psi-kappa} that
\begin{align*} \mathfrak m_{\tilde \lambda}(J(T)) &= \tfrac 1 2 (\tfrac 1 {12} \eta_1 + \tfrac 1 4 \eta_2 + \tfrac 2 3 \eta_3) + \tfrac 1 2 (\tfrac 1 {12} \psi_1 + \tfrac 1 4 \psi_2 + \tfrac 2 3 \psi_3) = \tfrac 1 {12} \theta_1 + \tfrac 1 4 \theta_2 + \tfrac 2 3 \theta_3, \\
\mathfrak m_{\tilde \lambda}(J(O))&= \tfrac 1 2 (\tfrac 1 {24} \eta_1 + \tfrac 3 8 \eta_2 + \tfrac 1 3 \eta_3 +\tfrac 1 4 \eta_4) + \tfrac 1 2 (\tfrac 1 {24} \psi_1 + \tfrac 3 8 \psi_2 + \tfrac 1 3 \psi_3 +\tfrac 1 4 \psi_4)\\ &=\tfrac 1 {24} \theta_1 + \tfrac 3 8 \theta_2 + \tfrac 1 3 \theta_3 +\tfrac 1 4 \theta_4  .
\end{align*}
This proves Theorem \ref{thm-genus-2} for $J(T)$ and $J(O)$.

\subsection{Groups $C_{n,1}$}
Recall $C_{n,1}=\langle \U(1), J\pmb{\zeta}_{2n} \rangle$, $n=2,4,6$. The group $C_{n,1}$ has the subgroup $\langle \U(1), \pmb{\zeta}_{n} \rangle$ of index $2$ which is isomorphic to $C_{n/2}$. Thus we have the decomposition
\[ C_{n,1} = C_{n/2} \sqcup J\pmb{\zeta}_{2n}C_{n/2} . \]
It follows from \eqref{coset-234} that
\[ J \pmb{\zeta}_8 C_2=J \pmb{\zeta}_8 C_1 \sqcup J \pmb{\zeta}_8^3 C_1 \quad \text{ and } \quad J \pmb{\zeta}_{12} C_3=J \pmb{\zeta}_{12} C_1 \sqcup J \pmb{\zeta}_{12}^5 \sqcup J \pmb{\zeta}_4 C_1.\]

Since
\begin{align*}
\int_{C_{n,1}} \Delta(\gamma)  &= \frac 1 2 \int_{C_{n/2}} \Delta(\gamma)+ \frac 1 2 \int_{C_{n/2}}\Delta(J\pmb{\zeta}_{2n}\gamma),
\end{align*}
we use \eqref{phi-cn} and \eqref{psi-kappa} to obtain
\begin{align*}
\mathfrak m_{\tilde \lambda}(C_{2,1}) &= \tfrac 1 2 \widetilde \eta_1 + \tfrac 1 2 \psi_2, \allowdisplaybreaks\\
\mathfrak m_{\tilde \lambda}(C_{4,1}) &= \tfrac 1 2 \widetilde \eta_2 + \tfrac 1 2 \psi_4, \allowdisplaybreaks \\
\mathfrak m_{\tilde \lambda}(C_{6,1}) &= \tfrac 1 2 \widetilde \eta_3 + \tfrac 1 6 \psi_2 + \tfrac 1 3 \psi_6.
\end{align*}
Thus we have proved Theorem \ref{thm-genus-2} for $C_{n,1}$.

\subsection{Groups $D_{n,1}$}
We have the decompositions
\[ D_{n,1} = C_{n,1} \sqcup \mathbf j C_{n,1} \quad \text{ and } \quad \mathbf j C_{n,1} = \mathbf j C_{n/2} \sqcup \mathbf j J \pmb{\zeta}_{2n} C_{n/2} . \]
Then we have
\begin{align*}
\int_{D_{n,1}} \Delta(\gamma) &= \frac 1 2 \int_{C_{n,1}} \Delta(\gamma)  + \frac 1 2 \int_{C_{n,1}}  \Delta(\mathbf j \gamma) \allowdisplaybreaks\\
&= \frac 1 2 \int_{C_{n,1}} \Delta(\gamma) + \frac 1 4 \int_{C_{n/2}} \Delta(\mathbf j \gamma) + \frac 1 4 \int_{C_{n/2}} \Delta(\mathbf j J \pmb{\zeta}_{2n} \gamma).
\end{align*}

For any $\gamma \in \mathbf j J \pmb{\zeta}_{2n} C_{n/2}$, one can check
\[  \det(I+x \gamma) = 1+2x^2+x^4. \]
Thus it follows from \eqref{jgam} and \eqref{psi-kappa} that
\[ \mathfrak m_{\tilde \lambda}(D_{n,1})= \tfrac 1 2 \mathfrak m_{\tilde \lambda} (C_{n,1}) + \tfrac 1 4 \eta_2 + \tfrac 1 4 \psi_2. \]
These are the same as the formulae for $D_{n,1}$ in Theorem \ref{thm-genus-2}.

\subsection{Groups $D_{n,2}$}
We have the decomposition
\[  D_{n,2} = C_n \sqcup J\mathbf j C_n , \quad n=3,4,6. \]
For any $\gamma \in J \mathbf j  C_{n}$, we obtain
\[  \det(I+x \gamma) = 1+2x^2+x^4. \]
Since we have
\begin{align*}
\int_{D_{n,2}} \Delta(\gamma) &= \frac 1 2 \int_{C_{n}} \Delta(\gamma)  + \frac 1 2 \int_{C_{n}} \Delta(J\mathbf j\gamma),
\end{align*}
we get
\[ \mathfrak m_{\tilde \lambda}(D_{n,2})= \tfrac 1 2 \widetilde \eta_n+ \tfrac 1 2 \psi_2. \]
These are the same as the formulae for $D_{n,2}$ in Theorem \ref{thm-genus-2}.

\subsection{Group $O_{1}$}
Recall $O_1 = \langle T, J Q_2 \rangle$  with $Q_2$ in \eqref{q2}.
There are $48$ cosets of $C_1$ in $O_1$. Half of them belong to the subgroup $T$. In the other half, $12$ of the $24$ cosets have the same characteristic polynomial as $J\pmb{\zeta}_4 C_1$ and the remaining $12$ cosets have the same as $J\pmb{\zeta}_8 C_1$. By \eqref{psi-kappa}, we obtain
\[ \mathfrak m_{\tilde \lambda}(O_1) = \tfrac 1 2 \mathfrak m_{\tilde \lambda}(T) + \tfrac 1 4 \psi_2 + \tfrac 1 4 \psi_4. \]
This coincides with the formula in Table \ref{tab-1} for $O_1$.

\subsection{Groups $E_{n}$}

We have $E_n= \langle \SU(2), e^{\pi i/n} \rangle$, $n=1,2,3,4,6$, where $e^{\pi i/n}$ is identified with  \[\mathrm{diag}(e^{\pi i/n},e^{\pi i/n},e^{-\pi i/n},e^{-\pi i/n}).\] The number $\mathfrak m_{\tilde \lambda}(E_n)$ is equal to the number of independent weight vectors $v_\mu$ with weight $\mu$ in $V^{\Sp(4)}_{\tilde \lambda}$ that generate the trivial representation of $\SU(2)$ and satisfy $\mu (\hat h_1+\hat h_2) \equiv 0$ (mod $2n$), equivalently,
the number of independent weight vectors $v_\mu$ such that  $\mu (\hat h_1+\hat h_2) \equiv 0$ (mod $2n$) and $e_1 v_\mu = f_1 v_\mu=0$.
The numbers $\mathfrak m_{\tilde \lambda}(E_n)$ are all calculated in Proposition \ref{prop:en-1}, and they coincide with the formulae in Table \ref{tab-1}. This proves Theorem \ref{thm-genus-2} for $E_n$.

\subsection{Groups $J(E_n)$}

The number $\mathfrak m_{\tilde \lambda}(J(E_1))$ is equal to the number of independent {\em $J$-fixed} weight vectors $v_\mu$ with weight $\mu$ in $V^{\Sp(4)}_{\tilde \lambda}$ such that $e_1 v_\mu = f_1 v_\mu=0$.
The numbers $\mathfrak m_{\tilde \lambda}(J(E_1))$ are calculated in Proposition \ref{e1}, and they are the same as the formulae in Table \ref{tab-1}.

For the other $J(E_n)$, $n=2,3,4,6$, observe that \begin{equation} \label{jen-det} \det \left (I+xJ \begin{pmatrix} tA & 0 \\ 0 & \overline{tA} \end{pmatrix} \right )= 1 +(2-|\tr A|^2)x^2+x^4\end{equation} for any $A \in \SU(2)$ and $t \in \U(1)$.
Thus \[\int_{E_n} \Delta(J\gamma) = \int_{E_1} \Delta(J\gamma)\qquad \text{ for }n=2,3,4,6. \]

Notice from Propositions \ref{e1} that
\[ \mathfrak m_{\tilde \lambda}(J(E_1)) = \frac 1 2 \mathfrak m_{\tilde \lambda}(E_1) + \frac 1 2 (-1)^z \, \delta( b \equiv_2 0).\]
Since $J(E_1)= E_1 \sqcup JE_1$, we obtain
the following:

\begin{corollary}
For $n=1,2,3,4,6$,
\begin{align}
\int_{E_n} \Delta(J\gamma) &=(x_1 \cdots x_m)^2 \sum_{b=0}^m \sum_{z=0}^{\lfloor \frac {m-b} 2 \rfloor} (-1)^z\delta(b \equiv_2 0)  \, \chi_{(2^{m-b-2z}, 1^{2z})}^{\Sp(2m)} \nonumber \\
&=(x_1 \cdots x_m)^2 \sum_{\ell=0}^{\lfloor \frac m 2 \rfloor} \sum_{z=0}^{\lfloor \frac m 2 \rfloor-\ell}  (-1)^z \chi_{(2^{m-2\ell-2z}, 1^{2z})}^{\Sp(2m)}.\label{jen-co}
\end{align}
\end{corollary}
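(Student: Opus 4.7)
The proof will be a short bookkeeping argument assembling ingredients already established in the excerpt, so I do not anticipate any real obstacle; the main task is to write down the right linear combination.

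My plan is to first handle the case $n=1$ using the decomposition $J(E_1) = E_1 \sqcup JE_1$ together with the formula from Proposition \ref{e1} that was quoted just above the corollary, namely
\[
\mathfrak m_{\tilde\lambda}(J(E_1)) \;=\; \tfrac{1}{2}\,\mathfrak m_{\tilde\lambda}(E_1) \;+\; \tfrac{1}{2}(-1)^{z}\,\delta(b\equiv_2 0).
\]
Applying Proposition \ref{prop-main} to $H=J(E_1)$ and to $H=E_1$, and noting that the Haar measure on $J(E_1)$ decomposes as the average of the Haar measures on the two cosets, I get
\[
\int_{J(E_1)} \Delta(\gamma) \;=\; \tfrac{1}{2}\int_{E_1}\Delta(\gamma) \;+\; \tfrac{1}{2}\int_{E_1}\Delta(J\gamma).
\]
Substituting the character expansion for both $\int_{J(E_1)}\Delta(\gamma)$ and $\int_{E_1}\Delta(\gamma)$ and plugging in the above expression for $\mathfrak m_{\tilde\lambda}(J(E_1))$, the terms carrying $\mathfrak m_{\tilde\lambda}(E_1)$ cancel, which isolates
\[
\int_{E_1}\Delta(J\gamma) \;=\; (x_1\cdots x_m)^{2}\sum_{b=0}^{m}\sum_{z=0}^{\lfloor(m-b)/2\rfloor} (-1)^{z}\,\delta(b\equiv_2 0)\,\chi^{\Sp(2m)}_{(2^{m-b-2z},1^{2z})}.
\]
This is the first displayed formula for $n=1$.

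Next, I would extend this to arbitrary $n\in\{1,2,3,4,6\}$ by invoking the identity $\int_{E_n}\Delta(J\gamma) = \int_{E_1}\Delta(J\gamma)$ that was recorded in the paragraph preceding the corollary. That identity is an immediate consequence of \eqref{jen-det}: for any element of $JE_n$, writing it as $J\bigl(\begin{smallmatrix} tA & 0\\ 0 & \overline{tA}\end{smallmatrix}\bigr)$ with $A\in\SU(2)$ and $t=e^{\pi i/n}\cdot u$ for $u\in \U(1)$, the characteristic polynomial depends only on $|\tr A|^{2}$, so the scalar $e^{\pi i/n}$ drops out and the value of the integral is independent of $n$.

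For the second displayed formula, I just reindex. The factor $\delta(b\equiv_2 0)$ forces $b=2\ell$ with $0\le \ell\le \lfloor m/2\rfloor$, and for such $b$ the upper bound $\lfloor(m-b)/2\rfloor$ becomes $\lfloor m/2\rfloor-\ell$. Substituting turns the double sum over $(b,z)$ into the claimed double sum over $(\ell,z)$, with $\chi^{\Sp(2m)}_{(2^{m-2\ell-2z},1^{2z})}$ in place of $\chi^{\Sp(2m)}_{(2^{m-b-2z},1^{2z})}$. This finishes both equalities.
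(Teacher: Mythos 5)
Your proposal is correct and follows essentially the same route as the paper: decompose $J(E_1)=E_1\sqcup JE_1$, use Proposition \ref{e1} in the form $\mathfrak m_{\tilde\lambda}(J(E_1))=\tfrac12\mathfrak m_{\tilde\lambda}(E_1)+\tfrac12(-1)^z\delta(b\equiv_2 0)$ to cancel the $E_1$-contribution and isolate $\int_{E_1}\Delta(J\gamma)$, then use \eqref{jen-det} to see the integrand is independent of $n$, and finally reindex $b=2\ell$. No gaps.
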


From $J(E_n)= E_n \sqcup JE_n$, $n=2,3,4,6$, we also obtain
\[ \mathfrak m_{\tilde \lambda}(J(E_n)) = \frac 1 2 \mathfrak m_{\tilde \lambda}(E_n) + \frac 1 2 (-1)^z \, \delta( b \equiv_2 0),\]
which are equal to the formulae in Table \ref{tab-1}.

\medskip

Using the above results on $J(E_n)$, we now prove another identity presented in the introduction.

\begin{theorem}[Theorem \ref{thm-identities-1} (2)] \label{thm-identities-e}
For any $m \in \mathbb Z_{\ge 1}$, the following identity holds:
\begin{align*}
\sum_{k=0}^m \binom{m-k}{\lfloor (m-k)/2 \rfloor}
\sum_{1 \le i_1 < \cdots <i_k \le m} \prod_{j=1}^k (x_{i_j}^2+x_{i_j}^{-2})
&=\sum_{\ell=0}^{\lfloor \frac m 2 \rfloor} \sum_{z=0}^{\lfloor \frac m 2 \rfloor-\ell}  (-1)^z \chi_{(2^{m-2\ell-2z}, 1^{2z})}^{\Sp(2m)}.
\end{align*}
\end{theorem}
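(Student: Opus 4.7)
By identity \eqref{jen-co}, the right-hand side of Theorem \ref{thm-identities-e} equals $(x_1 \cdots x_m)^{-2}\int_{E_n}\Delta(J\gamma)$ for any $n \in \{1,2,3,4,6\}$. The plan is therefore to compute this integral directly and match the result to the left-hand side. I will choose $n = 1$: since $-I \in \SU(2)$, we have $E_1 = \SU(2)$ under the embedding \eqref{embed-su2}, and by \eqref{jen-det} the integrand is already independent of the $\U(1)$-factor. So the whole problem reduces to integrating a single polynomial expression in $\tau := \tr A$ over $\SU(2)$.

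For $A \in \SU(2)$ with eigenvalues $u, u^{-1}$, we have $\tau = u + u^{-1}$ and $2 - \tau^2 = -(u^2 + u^{-2})$, so \eqref{jen-det} with $t = 1$ factors as
\[
\det(I + xJ\gamma) = 1 - (u^2 + u^{-2})x^2 + x^4 = (1 - u^2 x^2)(1 - u^{-2} x^2) = x^2\bigl[(x^2 + x^{-2}) - (u^2 + u^{-2})\bigr].
\]
Setting $y_i := x_i^2 + x_i^{-2}$ and $\beta := u^2 + u^{-2}$, the integrand becomes
\[
\prod_{i=1}^m \det(I + x_i J\gamma) = (x_1 \cdots x_m)^2 \prod_{i=1}^m (y_i - \beta) = (x_1 \cdots x_m)^2 \sum_{k=0}^m (-\beta)^{m-k}\, e_k(y_1, \dots, y_m),
\]
where $e_k$ is the $k$-th elementary symmetric polynomial. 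Substituting the explicit form of $e_k(y_1,\ldots,y_m) = \sum_{1 \le i_1 < \cdots < i_k \le m}\prod_{j=1}^k(x_{i_j}^2+x_{i_j}^{-2})$ already produces the shape appearing on the left-hand side of the theorem.

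It remains to evaluate the moments $M_j := \int_{\SU(2)} \beta^j\, d\gamma$. By the Weyl integration formula, writing $u = e^{i\theta}$, this becomes $M_j = \int_0^\pi (2\cos 2\theta)^j\, \tfrac{2}{\pi} \sin^2\theta\, d\theta$. Using $\sin^2\theta = (1 - \cos 2\theta)/2$ and substituting $\phi = 2\theta$ reduces the calculation to standard moments of $\cos^j\phi$ on $[0, 2\pi]$, yielding $M_j = \binom{j}{j/2}$ when $j$ is even and $M_j = -\tfrac{1}{2}\binom{j+1}{(j+1)/2} = -\binom{j}{(j-1)/2}$ when $j$ is odd (the last equality from Pascal symmetry). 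Both parities unify as $(-1)^j M_j = \binom{j}{\lfloor j/2 \rfloor}$. Plugging this into the expansion above and re-indexing via $k \mapsto m - k$ converts $\sum_k (-1)^{m-k} M_{m-k}\, e_k(y)$ into $\sum_k \binom{m-k}{\lfloor (m-k)/2 \rfloor}\, e_k(y)$, which is exactly the left-hand side of Theorem \ref{thm-identities-e}. The only delicate step is the parity bookkeeping for $M_j$; no tool beyond Weyl integration on $\SU(2)$ is required.
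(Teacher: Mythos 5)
Your proof is correct, and its overall architecture is the same as the paper's: both sides are identified with the two known evaluations of $\int_{E_n}\Delta(J\gamma)$, with \eqref{jen-co} supplying the character sum and a direct computation over $\SU(2)$ supplying the symmetric-function side. Where you genuinely diverge is in the evaluation of the scalar integral. The paper expands $(2-|\tr A|^2)^{m-k}$ binomially, invokes the Catalan-number moments $\int_{\SU(2)}|\tr A|^{2k}\,dA=\mathscr C_k$ from \eqref{trC}, and then needs the inverse binomial transform identity \eqref{bi-Ca} to recognize the answer as a central binomial coefficient. You instead observe that $2-|\tr A|^2=-(u^2+u^{-2})$ on eigenvalues and compute the moments $M_j=\int_{\SU(2)}(u^2+u^{-2})^j\,dA$ directly by Weyl integration, arriving at $(-1)^jM_j=\binom{j}{\lfloor j/2\rfloor}$ in one pass; your parity bookkeeping and the Pascal-symmetry step $\tfrac12\binom{j+1}{(j+1)/2}=\binom{j}{(j-1)/2}$ check out. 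This buys a more self-contained argument (no appeal to the Catalan/central-binomial transform), at the cost of a slightly more delicate case split on the parity of $j$; the paper's route keeps the Catalan numbers visible, which fits its surrounding discussion of moment sequences.
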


\begin{proof}
Recall
\begin{equation} \label{trC} \int_{\SU(2)} |\tr A|^{2k} dA= \mathscr C_k, \end{equation}
where $\mathscr C_k= \frac 1 {k+1} \binom{2k}{k}$ is the $k^{\mathrm {th}}$ Catalan number.
It is known that the second inverse binomial transformations of Catalan numbers are central binomial coefficients. Precisely, we have
\begin{equation} \label{bi-Ca} \binom m {\lfloor m/2 \rfloor} = \sum_{k=0}^m (-1)^k\binom m {k} 2^{m-k} \, \mathscr C_{k} . \end{equation}
Using \eqref{trC} and \eqref{bi-Ca}, we obtain
\begin{align*}& \int_{\SU(2)} (2 -|\tr A|^{2})^m dA = \int_{\SU(2)} \sum_{k=0}^m \binom m {k} 2^{m-k} (-1)^k\, |\tr A|^{2k} dA  \\ &= \sum_{k=0}^m (-1)^k\binom m {k} 2^{m-k} \, \int_{\SU(2)} |\tr A|^{2k} dA =  \binom m {\lfloor m/2 \rfloor }. \end{align*}
Then, using \eqref{jen-det}, we have
\begin{align*}
\int_{E_n} \Delta(J \gamma) & = \int_{\SU(2)} \prod_{i=1}^m (1+(2-|\tr A|^2) x_i^2 +x_i^4)\, dA
\\ & =(x_1 \cdots x_m)^2
\sum_{k=0}^m \binom{m-k}{\lfloor (m-k)/2 \rfloor}
\sum_{1 \le i_1 < \cdots <i_k \le m} \prod_{j=1}^k (x_{i_j}^2+x_{i_j}^{-2}).
\end{align*}

On the other hand, we have from \eqref{jen-co}
\begin{align*}
\int_{E_n} \Delta(J\gamma) &=(x_1 \cdots x_m)^2 \sum_{\ell=0}^{\lfloor \frac m 2 \rfloor} \sum_{z=0}^{\lfloor \frac m 2 \rfloor-\ell}  (-1)^z \chi_{(2^{m-2\ell-2z}, 1^{2z})}^{\Sp(2m)},
\end{align*}
and obtain the desired identity.
\end{proof}

\subsection{Groups $\U(2)$ and $N(\U(2))$}

The number $\mathfrak m_{\tilde \lambda}(\U(2))$ is equal to the number of independent weight vectors $v_\mu$ with weight $\mu$ in $V^{\Sp(4)}_{\tilde \lambda}$ such that $e_1 v_\mu = f_1 v_\mu=0$  and  $\mu=0$, and we see from \eqref{eq:wt} that  such a vector $v_\mu$ exists with multiplicity $1$ if and only if $b$ is even. The number $\mathfrak m_{\tilde \lambda}(N(\U(2))$ is equal to the number of such vectors $v_\mu$ which is fixed by $J$, and it follows
 from \eqref{eq:wt-J} that, when it exits, the vector $v_\mu$ is fixed by $J$ if and only if $J$ is even. These match with   the formulae in Table \ref{tab-1}.

\subsection{Groups $F_*$}

Recall that we have the embedding $\U(1) \times \U(1)$ into $\USp(4)$ given by
\[  (u_1, u_2) \mapsto \mathrm{diag}(u_1, u_2, u_1^{-1}, u_2^{-1}) ,\]
and that
\begin{align*}
\mathtt{a} &= {\tiny \begin{pmatrix}  0&0&1&0 \\  0&1&0&0 \\ -1&0&0&0 \\ 0&0&0&1 \end{pmatrix}},  & \mathtt{b}& = {\tiny \begin{pmatrix}  1&0&0&0 \\  0&0&0&1 \\ 0&0&1&0 \\ 0&-1&0&0 \end{pmatrix}},
&\mathtt{c}& = {\tiny \begin{pmatrix}  0&1&0&0 \\  -1&0&0&0 \\ 0&0&0&1 \\ 0&0&-1&0 \end{pmatrix}}, \\ \mathtt{ab}&=
{\tiny \begin{pmatrix}  0&0&1&0 \\  0&0&0&1 \\ -1&0&0&0 \\ 0&-1&0&0 \end{pmatrix}},  & \mathtt{ac}& = {\tiny \begin{pmatrix}  0&0&0&1 \\  -1&0&0&0 \\ 0&-1&0&0 \\ 0&0&-1&0 \end{pmatrix}}.
\end{align*}

\subsubsection{Group $F$}

We have $F \cong \U(1) \times \U(1)$. The number $\mathfrak m_{\tilde \lambda}(F)$ is equal to the number of independent weight vectors $v_\mu$ such that $\mu =0$.
This number is calculated in Proposition \ref{f-form}, and coincides with the formula in Table \ref{tab-1}.
That is,
\begin{equation} \label{feq}  \int_F \Delta(\gamma) = (x_1 \cdots x_m)^2 \sum_{b=0}^m \sum_{z=0}^{\lfloor \frac {m-b} 2 \rfloor} \xi_1(z,b) \chi_{(2^{m-b-2z}, 1^{2z})}^{\Sp(2m)},\end{equation}
where \[ \xi_1(z,b)= z(b+1) + \lfloor b/2 \rfloor +1. \]

\subsubsection{Group $F_{\mathtt{a}}$}

The number $\mathfrak m_{\tilde \lambda}(F_{\mathtt{a}})$ is equal to the number of {\em $\mathtt{a}$-fixed} independent weight vectors $v_\mu$ such that $\mu =0$.
This number is calculated in Proposition \ref{fa-prop}, and we obtain
\begin{equation} \label{faeq}  \int_{F_{\mathtt{a}}} \Delta(\gamma) = (x_1 \cdots x_m)^2 \sum_{b=0}^m \sum_{z=0}^{\lfloor \frac {m-b} 2 \rfloor} (\tfrac 1 2 \xi_1+\tfrac 1 2 \xi_2) \chi_{(2^{m-b-2z}, 1^{2z})}^{\Sp(2m)}.\end{equation}

Using \eqref{faeq}, we can establish the last identity presented in the introduction.
\begin{theorem}[Theorem \ref{thm-identities-1} (3)] \label{thm-identities-f}
For any $m \in \mathbb Z_{\ge 1}$, the following identity holds:
\begin{align*}
\prod_{i=1}^m (x_i+x_i^{-1})  \sum_{j=0}^{\lfloor \frac m 2 \rfloor} \chi_{(1^{m-2j})}^{\Sp(2m)}
&=\sum_{b=0}^m \sum_{z=0}^{\lfloor \frac {m-b} 2 \rfloor} \xi_2(z,b) \chi_{(2^{m-b-2z}, 1^{2z})}^{\Sp(2m)}.
\end{align*}
\end{theorem}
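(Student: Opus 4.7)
The plan is to compute the integral $\int_F \Delta(\mathtt{a}\gamma)$ in two different ways and match the results, in the same spirit as the derivations of Theorems \ref{thm-identities} and \ref{thm-identities-e}.

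First, I would exploit the coset decomposition $F_{\mathtt{a}} = F \sqcup \mathtt{a} F$, which gives
\[
\int_F \Delta(\mathtt{a}\gamma) = 2\int_{F_{\mathtt{a}}} \Delta(\gamma) - \int_F \Delta(\gamma).
\]
Substituting \eqref{feq} and \eqref{faeq} shows immediately that the coefficient of $\chi_{(2^{m-b-2z},1^{2z})}^{\Sp(2m)}$ in $(x_1\cdots x_m)^{-2}\int_F \Delta(\mathtt{a}\gamma)$ is exactly $\xi_2(z,b)$. So the right-hand side of the desired identity arises as a character expansion of this integral.

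Next, I would compute $\int_F \Delta(\mathtt{a}\gamma)$ directly. For $\gamma = \mathrm{diag}(u_1,u_2,u_1^{-1},u_2^{-1}) \in F$, a block-determinant calculation gives
\[
\det(I + x\mathtt{a}\gamma) = (1+x^2)(1+xu_2)(1+xu_2^{-1}),
\]
which is \emph{independent of $u_1$}. Integrating over $u_1 \in \U(1)$ is trivial, and writing $(1+xu_2)(1+xu_2^{-1}) = x(x+x^{-1}+u_2+u_2^{-1})$ recasts the remaining integral over $u_2$ as the $g=1$ auto-correlation function already evaluated in Theorem \ref{thm-genus-1} (or equivalently in \eqref{eqn-u1-1}), namely
\[
\int_{\U(1)} \prod_{i=1}^m (x_i+x_i^{-1}+u+u^{-1})\,du = \sum_{j=0}^{\lfloor m/2 \rfloor} \chi_{(1^{m-2j})}^{\Sp(2m)}.
\]
Using $\prod_i (1+x_i^2) = (x_1\cdots x_m)\prod_i(x_i+x_i^{-1})$, this yields
\[
\int_F \Delta(\mathtt{a}\gamma) = (x_1\cdots x_m)^2 \prod_{i=1}^m (x_i+x_i^{-1}) \sum_{j=0}^{\lfloor m/2 \rfloor} \chi_{(1^{m-2j})}^{\Sp(2m)}.
\]

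Equating the two expressions for $\int_F \Delta(\mathtt{a}\gamma)$ and cancelling $(x_1\cdots x_m)^2$ gives the identity. None of the steps present a serious obstacle: the only substantive inputs are the block factorization of $\det(I+x\mathtt{a}\gamma)$ (a short direct check from the definitions of $\mathtt{a}$ and $F$) and the prior computation \eqref{faeq} of $\mathfrak m_{\tilde\lambda}(F_{\mathtt{a}})$ via the branching Proposition \ref{fa-prop}. The genuinely non-trivial ingredient has already been absorbed into \eqref{faeq}; the argument here is the bookkeeping that converts the coset-independence of $\det(I+x\mathtt{a}\gamma)$ in $u_1$ into a closed-form character identity.
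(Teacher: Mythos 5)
Your proposal is correct and follows essentially the same route as the paper: the coset decomposition $F_{\mathtt{a}} = F \sqcup \mathtt{a}F$ combined with \eqref{feq} and \eqref{faeq} identifies the $\xi_2$-expansion of $\int_F \Delta(\mathtt{a}\gamma)$, while the direct computation $\det(I+x\mathtt{a}\gamma)=(1+x^2)(1+(u_2+u_2^{-1})x+x^2)$ and the $g=1$ result \eqref{eqn-u1-1} give the product form. The only difference is cosmetic bookkeeping (you solve for $\int_F\Delta(\mathtt{a}\gamma)$ first rather than expanding $\int_{F_{\mathtt{a}}}\Delta(\gamma)$ two ways), so there is nothing further to flag.
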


\begin{proof}
We have the decomposition
\[ F_{\mathtt{a}} = F \sqcup \mathtt{a} F.\]
For $\gamma \in \mathtt{a} F$, we have
\[ \det(I+x \gamma) = (1+x^2)(1+(u+u^{-1})x+x^2), \quad u \in \U(1). \]
Thus we obtain
\begin{align*}
\int_{F_{\mathtt{a}}} \Delta(\gamma)  &= \frac 1 2 \int_{F} \Delta(\gamma) + \frac 1 2 \int_{F} \Delta(\mathtt{a}\gamma)\allowdisplaybreaks \\
&= \frac 1 2 \int_{F} \Delta(\gamma) + \frac 1 2 \prod_{i=1}^m (1+x_i^2) \int_{\U(1)} \prod_{i=1}^m (1+(u+u^{-1})x_i+x_i^2) du \allowdisplaybreaks\\
&=  \frac 1 2 \int_{F} \Delta (\gamma)  + \frac 1 2 (x_1 \cdots x_m) \prod_{i=1}^m (1+x_i^2)  \sum_{j=0}^{\lfloor \frac m 2 \rfloor} \chi_{(1^{m-2j})}^{\Sp(2m)}  ,
\end{align*}
where we use the identity in Theorem \ref{thm-genus-1} for $\U(1)$. It follows from \eqref{feq} and \eqref{faeq} that
\begin{align} \label{last-eq}
\int_{F} \Delta(\mathtt{a}\gamma) &= (x_1 \cdots x_m)^2\sum_{b=0}^m \sum_{z=0}^{\lfloor \frac {m-b} 2 \rfloor} \xi_2(z,b) \chi_{(2^{m-b-2z}, 1^{2z})}^{\Sp(2m)}\\ &= (x_1 \cdots x_m) \prod_{i=1}^m (1+x_i^2)  \sum_{j=0}^{\lfloor \frac m 2 \rfloor} \chi_{(1^{m-2j})}^{\Sp(2m)}, \nonumber \end{align}
and we obtain the desired identity.
\end{proof}

\subsubsection{Group $F_{\mathtt{c}}$}
We have $F_{\mathtt{c}} = F \sqcup \mathtt{c} F$.
For $\gamma \in \mathtt{c} F$, we obtain
\[ \det(I+x \gamma) = 1+(u_1u_2+u_1^{-1}u_2^{-1})x^2+x^4. \]
Thus
\begin{align*}
& \int_{F_{\mathtt{c}}} \Delta(\gamma)= \frac 1 2 \int_{F} \Delta(\gamma)+ \frac 1 2 \int_{F}\Delta(\mathtt{c}\gamma) \allowdisplaybreaks\\
&= \frac 1 2 \int_{F} \Delta(\gamma)+ \frac 1 2  \int_{\U(1)} \int_{\U(1)}  \prod_{i=1}^m (1+(u_1u_2+u_1^{-1}u_2^{-1})x_i^2+x_i^4) du_1du_2 \allowdisplaybreaks\\
&= \frac 1 2 \int_{F} \Delta(\gamma) + \frac 1 2  \int_{\U(1)} \int_{\U(1)}  \prod_{i=1}^m (1+(u_1+u_1^{-1})x_i^2+x_i^4) du_1du_2 \allowdisplaybreaks\\
&= \frac 1 2 \int_{F} \Delta(\gamma)+\frac 1 2  \int_{\U(1)}   \prod_{i=1}^m (1+(u+u^{-1})x_i^2+x_i^4) du ,
\end{align*}
since $du_1$ is translation-invariant. It follows from \eqref{zeta-cn} that
\[ \mathfrak m_{\tilde \lambda}(F_{\mathtt{c}}) = \frac 1 2 \xi_1(z,b) +\frac 1 2 \eta_2(z,b), \] which is the same as the formula in Table \ref{tab-1}.

\subsubsection{Group $F_{\mathtt{ab}}$}
We have $F_{\mathtt{ab}} = F \sqcup \mathtt{ab} F$.
For $\gamma \in \mathtt{ab} F$, we have
\[ \det(I+x \gamma) = (1+x^2)^2. \]
Thus
\begin{align*}
\int_{F_{\mathtt{ab}}} \Delta(\gamma)  &= \frac 1 2 \int_{F} \Delta(\gamma)  + \frac 1 2 \int_{F} \Delta(\mathtt{ab} \gamma)\allowdisplaybreaks\\
&= \frac 1 2 \int_{F} \Delta(\gamma)+ \frac 1 2 \prod_{i=1}^m (1+x_i^2)^2.
\end{align*}
It follows from \eqref{f-form} and \eqref{psi-kappa} that
\[ \int_{F_{\mathtt{ab}}} \Delta(\gamma) = (x_1 \cdots x_m)^2\sum_{b=0}^m \sum_{z=0}^{\lfloor \frac {m-b} 2 \rfloor} (\tfrac 1 2 \xi_1 + \tfrac 1 2 \psi_2) \chi_{(2^{m-b-2z}, 1^{2z})}^{\Sp(2m)}. \]
This coincides with the formula in Table \ref{tab-1}.

\subsubsection{Group $F_{\mathtt{ac}}$}

There are four cosets of $F$ in $F_{\mathtt{ac}}$ represented by $1,\mathtt{ac},(\mathtt{ac})^2,(\mathtt{ac})^3$.
For $\gamma \in \mathtt{ac}F$ or $(\mathtt{ac})^3F$, we have
\[ \det(I+x \gamma) = 1+x^4. \]
For $\gamma \in (\mathtt{ac})^2F =\mathtt{ab}F$, we have
\[ \det(I+x \gamma) = (1+x^2)^2. \]
Using \eqref{psi-kappa}, we have
\begin{align*} \int_{F_{\mathtt{ac}}}\Delta(\gamma)&= \frac 1 4 \int_{F} \Delta(\gamma) +  \frac 1 4 \prod_{i=1}^m (1+x_i^2)^2 +  \frac 1 2 \prod_{i=1}^m (1+x_i^4)\allowdisplaybreaks\\
&= (x_1 \cdots x_m)^2\sum_{b=0}^m \sum_{z=0}^{\lfloor \frac {m-b} 2 \rfloor} (\tfrac 1 4 \xi_1 + \tfrac 1 4 \psi_2 + \tfrac 1 2 \psi_4) \chi_{(2^{m-b-2z}, 1^{2z})}^{\Sp(2m)} .\end{align*}
This coincides with the formula in Table \ref{tab-1}.

\subsubsection{Group $F_{\mathtt{a},\mathtt{b}}$}

There are four cosets of $F$ in $F_{\mathtt{a},\mathtt{b}}$ represented by $1,\mathtt{a},\mathtt{b},\mathtt{ab}$.
For $\gamma \in \mathtt{a} F$ or $\mathtt{b} F$, we have
\[ \det(I+x \gamma) = (1+x^2)(1+(u+u^{-1})x+x^2), \qquad u \in \U(1). \]
For $\gamma \in \mathtt{ab}F$, we have
\[ \det(I+x \gamma) = (1+x^2)^2. \]
Thus it follows from \eqref{psi-kappa} and \eqref{last-eq} that
\[ \mathfrak m_{\tilde \lambda}(F_{\mathtt{a},\mathtt{b}}) = \frac 1 4 \xi_1(z,b) +\frac 1 4 \psi_2(z,b) +  \frac 1 2 \xi_2(z,b), \] which is the same as the formula in Table \ref{tab-1}.

\subsubsection{Group $F_{\mathtt{ab},\mathtt{c}}$}

There are four cosets of $F$ in $F_{\mathtt{ab},\mathtt{b}}$ represented by $1,\mathtt{ab},\mathtt{c},\mathtt{abc}$.
For $\gamma \in \mathtt{c}F$ or $\mathtt{abc}F$, we have
\[ \det(I+x \gamma) = 1+(u_1u_2^{-1}+u_1^{-1}u_2)x^2+x^4. \]
Thus using the results on $F_{\mathtt{ab}}$ and $F_{\mathtt{c}}$, we have
\[ \mathfrak m_{\tilde \lambda}(F_{\mathtt{ab},\mathtt{c}}) = \frac 1 4 \xi_1(z,b) +\frac 1 4 \psi_2(z,b) +  \frac 1 2 \eta_2(z,b), \] which is the same as the formula in Table \ref{tab-1}.

\subsubsection{Group $F_{\mathtt{a},\mathtt{b},\mathtt{c}}$}

There are eight cosets of $F$ in $F_{\mathtt{a},\mathtt{b},\mathtt{c}}$ represented by $1,\mathtt{ac}$, $(\mathtt{ac})^2$, $(\mathtt{ac})^3$, $\mathtt{a}$, $\mathtt{b}$, $\mathtt{c}$, $\mathtt{abc}$.
Using the results on $F_{\mathtt{a}}$, $F_{\mathtt{b}}$, $F_{\mathtt{ac}}$ and $F_{\mathtt{abc}}$, we have
\[ \mathfrak m_{\tilde \lambda}(F_{\mathtt{a},\mathtt{b},\mathtt{c}}) = \frac 1 8 \xi_1(z,b) +\frac 1 4 \xi_2(z,b)+ \frac 1 8  \psi_2(z,b) + \frac 1 4 \psi_4(z,b)+ \frac 1 4 \eta_2(z,b), \] which coincides with the formula in Table \ref{tab-1}.

\subsection{Groups $G_{1,3}$ and $N(G_{1,3}
)$}
Recall that $G_{1,3} \cong \U(1) \times \SU(2)$ and
\[ N(G_{1,3}) = \langle G_{1,3}, \mathtt a \rangle, \qquad \text{ where }  \quad \mathtt a = {\tiny \begin{pmatrix}  0&0&1&0 \\  0&1&0&0 \\ -1&0&0&0 \\ 0&0&0&1 \end{pmatrix}} . \] The number $\mathfrak m_{\tilde \lambda}(G_{1,3})$ is equal to the number of independent weight vectors $v_\mu$ in $V^{\Sp(4)}_{\tilde \lambda}$ such that $\mu=0$ and $\hat e_2$ and $\hat f_2$ act trivially, and the number $\mathfrak m_{\tilde \lambda}(N(G_{1,3}))$ is equal to the number of
$\mathtt{a}$-fixed such vectors.
From Proposition \ref{prop-u1-su2}, we obtain
 the same numbers as in the formulae in Table \ref{tab-1} for $G_{1,3}$ and $N(G_{1,3})$.

\subsection{Groups $G_{3,3}$ and $N(G_{3,3})$}
Recall that $G_{3,3} \cong \SU(2) \times \SU(2)$ and
\[ N(G_{3,3}) = \langle G_{3,3}, J \rangle, \qquad \text{ where } \quad J = {\tiny \begin{pmatrix}  0&0&0&1 \\  0&0&-1&0 \\ 0&-1&0&0 \\ 1&0&0&0 \end{pmatrix}} . \]
In this case, the number $\mathfrak m_{\tilde \lambda}(G_{3,3})$ is equal to the number of independent weight vectors $v_\mu$ in $V^{\Sp(4)}_{\tilde \lambda}$ such that $\mu=0$ and $\hat e_i$ and $\hat f_i$ act trivially for $i=1,2$, and the number $\mathfrak m_{\tilde \lambda}(N(G_{1,3}))$ is equal to the number of
$J$-fixed such vectors.
From Proposition \ref{prop-su2-su2}, we obtain
 the same numbers as those in Table \ref{tab-1} for $G_{3,3}$ and $N(G_{3,3})$.

\medskip

We have checked all the formulae in Table \ref{tab-1}. This completes our proof of Theorem \ref{thm-genus-2}. \qed

\section{Branching rules} \label{sec-branching}

In this section we study branching rules that arise in relation to the Sato--Tate groups. The results are essentially used in Section \ref{sec-g-2} for the proof of Theorem \ref{thm-genus-2}. We present these branching rules for Lie algebras. This will be more consistent with standard notations for crystals which are our main tools in this section.
We refer the readers to \cite{BuSch, HK, Kash} for a theory of crystals.

\smallskip

The Cartan types of Lie algebras $\mathfrak{sp}_4$ and $\mathfrak{sl}_2$ will be denoted by $\mathtt C_2$ and $\mathtt A_1$, respectively. A partition $(a,b)$ of length $\le 2$ will be considered as a weight of $\mathtt C_2$ type corresponding to $a \epsilon_1 + b \epsilon_2$. The irreducible representation of $\mathfrak{sp}_4(\mathbb C)$ with highest weight $(a,b)$ will be denoted by $V_{\mathtt C_2}(a,b)$ and its crystal by $B_{\mathtt C_2}(a,b)$. Similar notations will be adopted for other types.
In this section, since we are mainly interested in the Sato--Tate groups,
\[ \text{we assume that $a-b$ is even,  and write $z:= (a-b)/2$} . \]
This assumption is justified by \eqref{con-coeff}.

\subsection{From $\mathtt C_2$ to $\mathtt A_1 \times \mathtt A_1$}
For a partition $(a,b)$ of length $\le 2$, define a set of pairs of integers
\begin{align} \label{eq: Phi}
\Phi_{\mathtt A_1 \times \mathtt A_1}(a,b):=\{ (a-r-s,b-r+s)  \  |   \ 0 \le r \le b, \quad 0 \le s \le a-b  \} .
\end{align}
Clearly, we have
$$|\Phi_{\mathtt A_1 \times \mathtt A_1}(a,b)| =
(a-b+1)(b+1). $$

\begin{example}
\hfill
\begin{enumerate}
\item[{\rm (a)}] $\Pset(2,2)=\{  (2,2),(1,1),(0,0) \}$.
\item[{\rm (b)}] $\Pset(4,2)=\{  (4,2),(3,1),(2,0), \ (3,3),(2,2),(1,1), \ (2,4),(1,3),(0,2)  \}$.
\end{enumerate}
\end{example}

Let $\mathtt{t}_1=(\frac{1}{2},-\frac{1}{2},0,0 )$ and $\mathtt{t}_2=(0,0,\frac{1}{2},-\frac{1}{2})$ be weights of type $\mathtt A_1 \times \mathtt A_1$.
For non-negative integers $p$ and $q$, the crystal $B_{\mathtt A_1 \times \mathtt A_1}(p \mathtt{t}_1+q \mathtt{t}_2)$ can be realized as the set of one-row standard tableaux with entries $1$, $\Oone$, $2$ or $\Otwo$
having the order $1 \prec \Oone \prec 2 \prec \Otwo$ such that
\begin{align}\label{eq: tableaux realization of A1A1 crystal}
\text{the total number of $1$ and $\Oone$ is $p$ and the total number of $2$ and $\Otwo$ is $q$.}
\end{align}
 For instance,
$$ B_{\mathtt A_1 \times \mathtt A_1}(3\mathtt{t}_1+ \mathtt{t}_2) = \{1112,11\Oone2,1\Oone\Oone2,\Oone \Oone \Oone 2, 111\Otwo,11\Oone\Otwo,1\Oone\Oone\Otwo,\Oone\Oone\Oone\Otwo\}.$$
The cardinality of $B_{\mathtt A_1 \times \mathtt A_1}(p \mathtt{t}_1+q \mathtt{t}_2)$ is $(p+1)(q+1)$.

Note that,  $B_{\mathtt A_1 \times \mathtt A_1}(p \mathtt{t}_1+q \mathtt{t}_2)$  is \emph{minuscule}; i.e., each weight space in  $B_{\mathtt A_1 \times \mathtt A_1}(p \mathtt{t}_1+q \mathtt{t}_2)$
is $1$-dimensional.
The crystal graph of $B_{\mathtt A_1 \times \mathtt A_1}(\mathtt{t}_1+\mathtt{t}_2)$ is given by
\begin{align*}
\xymatrix@R=0.1em{
 & {\scriptsize 12}  \ar[dr]^2 \ar[dl]_1  \allowdisplaybreaks \\
 {\scriptsize \Oone  2}  \ar[dr]_2 &&  {\scriptsize  1  \Otwo} \ar[dl]^1 \allowdisplaybreaks \\
&  {\scriptsize  \Oone  \Otwo}
}
\end{align*}

Clearly, the Kashiwara operators $\tilde{e}_1$ and $\tilde{e}_2$ commute. For a pair $(a_1,a_2)$ of non-negative integers and $i=1,2$, we have
\begin{equation}\label{eq: A1 A1-1}
\begin{aligned}
& B_{\mathtt A_1 \times \mathtt A_1}(a_1 \mathtt{t}_1+a_2 \mathtt{t}_2) \otimes B_{\mathtt A_1 \times \mathtt A_1}(\mathtt{t}_i) \allowdisplaybreaks\\
& =  B_{\mathtt A_1 \times \mathtt A_1}( (a_1+\delta_{i,1}) \mathtt{t}_1+ (a_2+\delta_{i,2}) \mathtt{t}_2)   \oplus \delta({a_i \ge 1})  B_{\mathtt A_1 \times \mathtt A_1}( (a_1-\delta_{i,1}) \mathtt{t}_1+ (a_2-\delta_{i,2})  \mathtt{t}_2) \\
\end{aligned}
\end{equation}
and
\begin{equation}\label{eq: A1 A1-2}
\begin{aligned}
& B_{\mathtt A_1 \times \mathtt A_1}(a_1 \mathtt{t}_1+a_2 \mathtt{t}_2) \otimes B_{\mathtt A_1 \times \mathtt A_1}(\mathtt{t}_1+\mathtt{t}_2) \allowdisplaybreaks\\
& =  B_{\mathtt A_1 \times \mathtt A_1}( (a_1+1) \mathtt{t}_1+ (a_2+1) \mathtt{t}_2) \oplus \delta({a_2 >0})  B_{\mathtt A_1 \times \mathtt A_1}( (a_1+1) \mathtt{t}_1+ (a_2-1) \mathtt{t}_2) \allowdisplaybreaks\\
& \phantom{LLLLLLL} \oplus\delta({a_1>0})  B_{\mathtt A_1 \times \mathtt A_1}( (a_1-1) \mathtt{t}_1+ (a_2+1) \mathtt{t}_2) \allowdisplaybreaks \\
& \phantom{LLLLLLLLLLLLL} \oplus \delta({a_1a_2>0})  B_{\mathtt A_1 \times \mathtt A_1}( (a_1-1) \mathtt{t}_1+ (a_2-1)\mathtt{t}_2).
\end{aligned}
\end{equation}

On the other hand, for a highest weight crystal $B_{\mathtt C_2}(a,b)$ $(a \ge b \ge 0)$, we have
\begin{equation}\label{eq: C_2-1}
\begin{aligned}
B_{\mathtt C_2}(a,b) \otimes B_{\mathtt C_2}(1,0) = \ & B_{\mathtt C_2}(a+1,b) \oplus  \delta(a>b)B_{\mathtt C_2}(a,b+1) \\
& \hspace{5ex} \oplus  \delta(b>0)B_{\mathtt C_2}(a,b-1)  \oplus \delta(a-1 \ge b)B_{\mathtt C_2}(a-1,b)
\end{aligned}
\end{equation}
and
\begin{equation}\label{eq: C_2-2}
\begin{aligned}
B_{\mathtt C_2}(a,b) \otimes B_{\mathtt C_2}(1,1) = \ & B_{\mathtt C_2}(a+1,b+1) \oplus \delta({b >0}) B_{\mathtt C_2}(a+1,b-1)\\&   \hspace{5ex}  \oplus \delta({a >b})B_{\mathtt C_2}(a,b)
 \oplus \delta({a -2 \ge b})B_{\mathtt C_2}(a-1,b+1)\\  & \hspace{10ex}  \oplus \delta({b >0})B_{\mathtt C_2}(a-1,b-1).
\end{aligned}
\end{equation}

\begin{proposition} \label{prop: A1xA1}
For each partition $(a,b)$, we have
$$  B_{\mathtt C_2}(a,b)\vert_{\mathtt A_1 \times \mathtt A_1} \simeq  \soplus_{(p,q)\in \Pset(a,b)} B_{\mathtt A_1 \times \mathtt A_1}(p \mathtt{t}_1+q \mathtt{t}_2).$$
\end{proposition}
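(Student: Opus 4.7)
My plan is to establish Proposition~\ref{prop: A1xA1} by reducing it to a character identity which can then be verified directly, with a parallel inductive route using the tensor product rule \eqref{eq: C_2-1} as a cross-check.

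Since semisimple $\mathtt A_1 \times \mathtt A_1$-modules are determined by their characters, and since \eqref{eq: tableaux realization of A1A1 crystal} shows that the character of $B_{\mathtt A_1 \times \mathtt A_1}(p\mathtt{t}_1 + q\mathtt{t}_2)$ factors as $\chi_p(x)\,\chi_q(y)$ in suitable torus variables, it suffices to prove
\begin{equation*}
\chi^{\Sp(4)}_{(a,b)}(x, y) \ = \ \sum_{r=0}^{b}\sum_{s=0}^{a-b} \chi_{a-r-s}(x)\, \chi_{b-r+s}(y),
\end{equation*}
where $\chi^{\Sp(4)}_{(a,b)}$ is evaluated at $\mathrm{diag}(x, y, x^{-1}, y^{-1})$ and $\chi_p(x) = (x^{p+1} - x^{-p-1})/(x - x^{-1})$. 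Clearing the common denominator $(x - x^{-1})(y - y^{-1})$ and invoking the Weyl character formula for $\Sp(4)$, this reduces to a polynomial identity in $x, y$ which can be checked by summing the geometric progression in $s$ (producing a factor $(x+x^{-1}) - (y+y^{-1})$ cancelled by the denominator), followed by a telescoping in $r$ that recovers the Weyl numerator.

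Alternatively, one may argue inductively on $b$ entirely in the language of crystals. For $b = 0$ the claim follows from $V_{\mathtt C_2}(a, 0) \cong \operatorname{Sym}^a(V)$ and the split $V|_{\mathtt A_1 \times \mathtt A_1} \cong V_{\mathtt A_1 \times \mathtt A_1}(\mathtt{t}_1) \oplus V_{\mathtt A_1 \times \mathtt A_1}(\mathtt{t}_2)$, which matches $\Phi_{\mathtt A_1 \times \mathtt A_1}(a, 0) = \{(a - s, s) : 0 \le s \le a\}$. For $a \ge b \ge 1$, applying \eqref{eq: C_2-1} with $(a, b - 1)$ in place of $(a, b)$ yields
\begin{equation*}
B_{\mathtt C_2}(a, b-1) \otimes B_{\mathtt C_2}(1, 0) \simeq B_{\mathtt C_2}(a+1, b-1) \oplus B_{\mathtt C_2}(a, b) \oplus \delta(b \ge 2)\, B_{\mathtt C_2}(a, b-2) \oplus B_{\mathtt C_2}(a-1, b-1),
\end{equation*}
in which every summand except $B_{\mathtt C_2}(a, b)$ has second part strictly less than $b$. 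Restricting the left-hand side via the inductive hypothesis for $B_{\mathtt C_2}(a, b-1)$, the identification $B_{\mathtt C_2}(1, 0)|_{\mathtt A_1 \times \mathtt A_1} \simeq B_{\mathtt A_1 \times \mathtt A_1}(\mathtt{t}_1) \oplus B_{\mathtt A_1 \times \mathtt A_1}(\mathtt{t}_2)$, and \eqref{eq: A1 A1-1}, one then solves for $B_{\mathtt C_2}(a, b)|_{\mathtt A_1 \times \mathtt A_1}$.

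The main obstacle in this crystal route is the ensuing combinatorial bookkeeping: one must show that applying the multiset-valued map $(p, q) \mapsto \{(p \pm 1, q), (p, q \pm 1)\}$ to $\Phi_{\mathtt A_1 \times \mathtt A_1}(a, b-1)$, with negative-shift options suppressed whenever the relevant coordinate is $0$, yields exactly $\Phi_{\mathtt A_1 \times \mathtt A_1}(a+1, b-1) \sqcup \Phi_{\mathtt A_1 \times \mathtt A_1}(a, b) \sqcup \delta(b \ge 2)\,\Phi_{\mathtt A_1 \times \mathtt A_1}(a, b-2) \sqcup \Phi_{\mathtt A_1 \times \mathtt A_1}(a-1, b-1)$. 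The delicate points are the boundary cases $r \in \{0, b\}$ and $s \in \{0, a - b\}$ in the parametrization \eqref{eq: Phi}, which control when the $\delta$-factors in \eqref{eq: A1 A1-1} switch off. The character route bypasses this case analysis but requires a somewhat longer polynomial manipulation; either path completes the proof.
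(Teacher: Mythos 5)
Your proposal is correct, and your primary route is genuinely different from the paper's. The paper proves Proposition \ref{prop: A1xA1} by a double induction carried out entirely inside the crystal/tensor-product calculus: the step $(a,b)\to(a+1,b)$ uses \eqref{eq: C_2-1} together with \eqref{eq: A1 A1-1} (tensoring with the vector representation), and the step $(a,b)\to(a+1,b+1)$ uses \eqref{eq: C_2-2} together with \eqref{eq: A1 A1-2} (tensoring with $B_{\mathtt C_2}(1,1)$); the multiset bookkeeping is left implicit there, just as in your second route. Your first route instead reduces the statement to the character identity $\chi^{\Sp(4)}_{(a,b)}(x,y)=\sum_{r=0}^{b}\sum_{s=0}^{a-b}\chi_{a-r-s}(x)\,\chi_{b-r+s}(y)$ and verifies it from the Weyl character formula; this does work --- the sum over $s$ telescopes against the factor $(x+x^{-1})-(y+y^{-1})$ of the $\Sp(4)$ Weyl denominator, and the remaining sum over $r$ telescopes to the Weyl numerator --- and it is a legitimate proof of the proposition, since $\mathtt A_1\times\mathtt A_1$ is not a Levi of $\mathtt C_2$ and the ``restricted crystal'' in the statement is in any case shorthand for the decomposition of the restricted representation, which is determined by its character. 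What the character route buys is a short, self-contained verification with no case analysis at the boundary of $\Pset(a,b)$. What the paper's route buys is reusable structure: the same inductive scaffolding (in particular the $B_{\mathtt C_2}(1,1)$ step) is what later produces the explicit $\mathtt A_1\times\mathtt A_1$-highest weight vectors $v_{(p,q;a,b)}$ of Lemma \ref{vpq}, on which the $J$-fixed-vector counts of Proposition \ref{prop-J-imp} depend. Your second route --- a single induction on $b$ using only \eqref{eq: C_2-1}, with base case $\mathrm{Sym}^a V$ --- is a sound variant of the paper's argument; as you note, the multiset identity for $\Pset$ at the boundary values of $r$ and $s$ still has to be checked, which you describe but do not carry out, leaving that route at essentially the same level of detail as the paper's own proof.
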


\begin{proof}
For simplicity we write $B_{\mathtt A}(p,q)$ for $B_{\mathtt A_1 \times \mathtt A_1}(p \mathtt{t}_1+q \mathtt{t}_2)$.
By direct calculation, one can see that
$$B_{\mathtt C_2}(1,0)\vert_{\mathtt A_1 \times \mathtt A_1} =  B_{\mathtt A}(1,0) \oplus  B_{\mathtt A}(0,1). $$ 
We first apply an induction argument on $a$ using \eqref{eq: A1 A1-1} and \eqref{eq: C_2-1} to obtain the formula for  $B_{\mathtt C_2}(a+1,b)$.
Namely,
$B_{\mathtt C_2}(a,b) \otimes B_{\mathtt C_2}(1,0) $ can be replaced with
$$ \left( \soplus_{(p,q)\in \Pset(a,b)} B_{\mathtt A}(p,q)  \right) \otimes \left( B_{\mathtt A}(1,0) \oplus  B_{\mathtt A}(0,1) \right) $$
Then, for the composition factors in RHS of \eqref{eq: C_2-1} except $B_{\mathtt C_2}(a+1,b)$, we apply the induction on $a$.
Now we use \eqref{eq: A1 A1-2} and \eqref{eq: C_2-2} to obtain the formula for  $B_{\mathtt C_2}(a+1,b+1)$.
More precisely,
$B_{\mathtt C_2}(a,b) \otimes B_{\mathtt C_2}(1,1) $ can be replaced with
$$ \left( \soplus_{(p,q)\in \Pset(a,b)} B_{\mathtt A}(p,q)  \right) \otimes \left( B_{\mathtt A}(1,1) \oplus  B_{\mathtt A}(0,0) \right).$$
Then, for the composition factors in RHS of \eqref{eq: C_2-2} except $B_{\mathtt C_2}(a+1,b+1)$, we apply the induction on $a+b$.  We obtain
our assertion for $B_{\mathtt C_2}(a+1,b+1)$ by comparing LHS and RHS of \eqref{eq: C_2-2}.
\end{proof}

\begin{corollary} \label{cor-c2-a1a1} \hfill
\begin{enumerate}
\item[{\rm (a)}] For $(p,q)\in \Pset(a,b)$, the
multiplicity of $B_{\mathtt A_1 \times \mathtt A_1}(p \mathtt{t}_1+q \mathtt{t}_2)$ in $ B_{\mathtt C_2}(a,b)\vert_{\mathtt A_1 \times \mathtt A_1}$ is $1$.
\item[{\rm (b)}] $B_{\mathtt A_1 \times \mathtt A_1}(0)$ appears in $ B_{\mathtt C_2}(a,b)\vert_{\mathtt A_1 \times \mathtt A_1}$ if and only if $a=b$.
\end{enumerate}
\end{corollary}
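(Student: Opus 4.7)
The plan is to derive both parts as immediate consequences of Proposition \ref{prop: A1xA1}, which gives
$$B_{\mathtt C_2}(a,b)\vert_{\mathtt A_1 \times \mathtt A_1} \simeq \soplus_{(p,q)\in \Pset(a,b)} B_{\mathtt A_1 \times \mathtt A_1}(p \mathtt{t}_1 + q \mathtt{t}_2).$$
All that remains is a combinatorial inspection of the index set $\Pset(a,b)$ from \eqref{eq: Phi}.

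For part (a), I would show that the parametrization $(r,s) \mapsto (a-r-s,\,b-r+s)$ from $\{0,1,\dots,b\} \times \{0,1,\dots,a-b\}$ onto $\Pset(a,b)$ is injective: if two images coincide, then adding and subtracting coordinates recovers $r+s$ and $r-s$, hence $(r,s)$ uniquely. In particular $|\Pset(a,b)| = (a-b+1)(b+1)$, and each pair $(p,q)\in \Pset(a,b)$ is enumerated exactly once in the direct sum of Proposition \ref{prop: A1xA1}; therefore each summand appears with multiplicity one.

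For part (b), the trivial crystal $B_{\mathtt A_1 \times \mathtt A_1}(0)$ corresponds to $(p,q) = (0,0)$, so I need to decide when $(0,0) \in \Pset(a,b)$. Solving $a-r-s=0$ and $b-r+s=0$ forces $r = (a+b)/2$ and $s = (a-b)/2$, which are non-negative integers under the standing parity assumption $a \equiv_2 b$. The constraint $0 \le s \le a-b$ is automatic from $a \ge b$, while $0 \le r \le b$ reduces to $a+b \le 2b$, i.e.\ $a \le b$; combined with $a \ge b$ this forces $a = b$. Conversely, when $a = b$ the choice $(r,s) = (b,0)$ exhibits $(0,0)$ as an element of $\Pset(a,b)$.

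I do not anticipate any real obstacle: both assertions are bookkeeping once Proposition \ref{prop: A1xA1} is granted. The one point requiring care is the parity argument in part (b), which implicitly uses the assumption that $a-b$ is even (justified in the paragraph preceding the corollary via \eqref{con-coeff}); without this, $r = (a+b)/2$ would fail to be an integer and the analysis would have to be adjusted, but the corollary is only applied in that setting.
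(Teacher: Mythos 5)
Your proposal is correct and follows the same route the paper intends: the corollary is stated without proof there, being read off directly from Proposition \ref{prop: A1xA1} together with an inspection of the index set $\Pset(a,b)$. Your injectivity check of $(r,s)\mapsto(a-r-s,\,b-r+s)$ and the determination of when $(0,0)\in\Pset(a,b)$ supply exactly the bookkeeping the paper leaves implicit.
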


\subsection{Sato-Tate groups $C_n$} \label{sec-H-Cn}
Recall that we set $\hat{h}_1=E_{11}-E_{33}$ and $\hat{h}_2=E_{22}-E_{44}$, where $E_{ij}$  are the $4 \times 4$ elementary matrices.
From the embedding \eqref{embed-su2-su2}, we have the induced Lie algebra embedding $\mathfrak {sl}_2(\mathbb C) \times \mathfrak{sl}_2(\mathbb C) \hookrightarrow \mathfrak {sp}_4(\mathbb C)$ such that
\[ (h,0) \longmapsto \hat{h}_1 \quad \text{ and }\quad (0,h) \longmapsto \hat{h}_2 ,\]
where $h = {\scriptsize \begin{pmatrix} 1&0\\0&-1 \end{pmatrix}}$.

Consider the Sato--Tate groups $C_n$ for $n=1,2,3,4,6$. By definition the number $\mathfrak m_{\tilde \lambda}(C_n)$ is equal to the multiplicity of trivial representations in $\chi_{\tilde \lambda}^{\Sp(4)}|_{C_n}$. Throughout this section, write \[\tilde \lambda = (a,b).\] As observed in Section \ref{subsec-cn}, the number $\mathfrak m_{(a,b)}(C_n)$ is equal to the number of independent weight vectors with weight $\mu$ in the representation $V_{\mathtt C_2}(a,b)$ such that \[ \text{$\mu(\hat h_1+\hat h_2)=0$ \quad and \quad $\mu(\hat h_1-\hat h_2) \equiv 0$ (mod $2n$),\quad  $n=1,2,3,4,6$.} \]
If $a-b$ is odd then $\mu (\hat h_1+ \hat h_2)$ cannot be zero. This verifies \eqref{con-coeff}, and we assume that $a-b$ is even in what follows, and write
\[ z: = (a-b)/2 . \]

\begin{proposition} [$n=1$] \label{prop:cn1}
For a partition $(a,b)$, assume that $a-b$ is even. Then the sum of  weight multiplicities of the weights $\mu$ such that
$\mu(\hat h_1+\hat h_2)=0$
is given by
\begin{align*} \sum_{(p,q) \in \Pset(a,b)}( \min(p,q)+1)&  =(b+1)(z^2 + zb+ 2z +b/2+1) ,
\end{align*} which is equal to $ \widetilde \eta_1(z,b) = \eta_1(z,b)$.
\end{proposition}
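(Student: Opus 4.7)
The plan is to combine the branching rule in Proposition \ref{prop: A1xA1} with the explicit tableau description of the minuscule crystals $B_{\mathtt A_1\times\mathtt A_1}(p\mathtt t_1+q\mathtt t_2)$, and then carry out an elementary closed-form evaluation of the resulting double sum. The key structural input is that the weight condition $\mu(\hat h_1+\hat h_2)=0$ depends only on the $\mathtt A_1\times\mathtt A_1$-piece in which a weight vector lives, so I can count on each piece separately and sum.

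First I would fix $(p,q)\in\Pset(a,b)$ and analyze $B_{\mathtt A_1\times\mathtt A_1}(p\mathtt t_1+q\mathtt t_2)$. Because this crystal is minuscule, its weights are exactly $(p-2i)\mathtt t_1+(q-2j)\mathtt t_2$ with $0\le i\le p$ and $0\le j\le q$, each with multiplicity one. The condition $\mu(\hat h_1+\hat h_2)=0$ translates to $i+j=(p+q)/2$. Since $(p,q)=(a-r-s,b-r+s)$ yields $p+q=a+b-2r$ and we have assumed $a-b$ even, $p+q$ is always even on $\Pset(a,b)$. A direct count of integer pairs $(i,j)$ with $i+j=(p+q)/2$, $0\le i\le p$, $0\le j\le q$ gives $\min(p,q)+1$, which is precisely the contribution of $B_{\mathtt A_1\times\mathtt A_1}(p\mathtt t_1+q\mathtt t_2)$ to the sum. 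This confirms the first equality of the proposition.

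Next I would evaluate the double sum. Writing $a=b+2z$ and $(p,q)=(b+2z-r-s,\,b-r+s)$, one checks
\[
\min(p,q)=b-r+\min(s,\,2z-s),
\]
so summing over $s\in\{0,1,\dots,2z\}$ uses the identity $\sum_{s=0}^{2z}\min(s,2z-s)=z^{2}$ (the values are $0,1,\dots,z-1,z,z-1,\dots,1,0$). This gives
\[
\sum_{s=0}^{2z}\bigl(\min(p,q)+1\bigr)=(2z+1)(b-r+1)+z^{2}.
\]
Then summing over $r\in\{0,1,\dots,b\}$, using $\sum_{r=0}^{b}(b-r+1)=\tfrac12(b+1)(b+2)$, yields
\[
(2z+1)\cdot\tfrac12(b+1)(b+2)+(b+1)z^{2}=(b+1)\bigl(z^{2}+zb+2z+b/2+1\bigr),
\]
which is the claimed formula and agrees with $\eta_1(z,b)=\widetilde\eta_1(z,b)$ as defined in Table \ref{tab-1}.

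No step is a genuine obstacle; the only care needed is (i) verifying that $p+q$ is automatically even on $\Pset(a,b)$ under the hypothesis that $a-b$ is even, and (ii) the bookkeeping in the arithmetic sum, which I would organize as above by first summing in $s$ (producing the $z^2$ term) and then in $r$.
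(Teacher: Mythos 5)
Your proposal is correct and follows essentially the same route as the paper: both reduce to the branching of Proposition \ref{prop: A1xA1}, count $\min(p,q)+1$ contributions from each minuscule piece $B_{\mathtt A_1\times\mathtt A_1}(p\mathtt t_1+q\mathtt t_2)$, and then evaluate the resulting double sum over $\Pset(a,b)$. The only (cosmetic) differences are that you extract $\min(p,q)+1$ from the weight lattice of the minuscule crystal rather than from the tableau model, and you sum over the $(r,s)$-parametrization in the opposite order from the paper's row-by-row summation of the array \eqref{eq: arrangement}; the arithmetic agrees in both cases.
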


\begin{proof}
Since we have the branching decomposition in Proposition \ref{prop: A1xA1}, we look at $B_{\mathtt A_1 \times \mathtt A_1}(p \mathtt{t}_1+q \mathtt{t}_2)$
for $(p,q) \in \Pset(a,b)$. If a tableau in the realization of $B_{\mathtt A_1 \times \mathtt A_1}(p \mathtt{t}_1+q \mathtt{t}_2)$ has weight $\mu$,
then $\mu(\hat h_1+\hat h_2)$ is equal to
$$ \text{(the number of $1$ and $2$)} - \text{(the number of $\Oone$ and $\Otwo$)}.$$
Thus the number of elements in $B_{\mathtt A_1 \times \mathtt A_1}(p \mathtt{t}_1+q \mathtt{t}_2)$ with weights $\mu$ such that $\mu(\hat h_1+\hat h_2)=0$ is equal to $\min(p,q)+1$
(recall \eqref{eq: tableaux realization of A1A1 crystal}).

The elements $(p,q)$ in $\Pset(a,b)$ and the corresponding numbers $\min(p,q)+1$ can be each arranged into an array of size
$(2z+1) \times (b+1)$ as follows, where we put $(p,q)$ in the left and $\min(p,q)+1$ in the right:
\begin{equation} \label{eq: arrangement}
{\scriptsize
\left. \begin{matrix}
(a,b) & (a-1,b-1) & \cdots & (a-b,0) \\
(a-1,b+1) & (a-2,b) & \cdots & (a-b-1,1) \\
\vdots & \vdots & \cdots & \vdots  \\
(a-z+1,b+z-1) & (a-z,b+z-2)& \cdots & (a-b-z+1,z-1) \\
(b+z, b+z) & (b+z-1,b+z-1) & \cdots & (z,z) \\
(b+z-1, a-z+1) & (b+z-2,a-z) & \cdots & (z-1,a-b-z+1) \\
\vdots & \vdots & \cdots & \vdots  \\
(b+1,a-1) & (b,a-2) & \cdots & (1,a-b-1) \\
(b,a) & (b-1,a-1) & \cdots & (0,a-b)
\end{matrix} \ \ \right|
\
\begin{matrix}
b+1 & b & \cdots & 1 \\
b+2 & b+1 & \cdots & 2 \\
\vdots & \vdots & \cdots & \vdots  \\
b+z & b+z-1 & \cdots & z \\
b+z+1 & b+z & \cdots & z+1 \\
b+z & b+z-1 & \cdots & z \\
\vdots & \vdots & \cdots & \vdots  \\
b+2 & b+1 & \cdots & 2 \\
b+1 & b & \cdots & 1
\end{matrix} }
\end{equation}
Taking sums by rows and adding up the results yields
$$
\sum_{(p,q) \in \Pset(a,b)}( \min(p,q)+1)=\dfrac{(b+1)}{2} \left( 2(b+2)+2(b+4)+\cdots+2(b+2z) +(b+2z+2) \right),
$$
which is equal to $\eta_1(z,b)$.
\end{proof}

\begin{remark} \label{rmk: p=q line}
In~\eqref{eq: arrangement}, all $(p,q)$'s with $p=q$ form the row containing $(b+z,b+z)$.
\end{remark}

\begin{proposition}[$n=2$]\label{prop:cn2}
For a partition $(a,b)$, assume that $a-b$ is even. Then the sum of weight multiplicities of the weights $\mu$ such that $\mu(\hat h_1+\hat h_2)=0$ and $\mu(\hat h_1-\hat h_2) \equiv_4 0$ is given by
\begin{equation}
\sum_{\substack{ (p,q) \in \Pset(a,b) \\ q\, \equiv_2 \, 0 }  }  ( \min(p,q) +1) \label{eq: mod 4 first},
\end{equation}
and the sum is equal to
\begin{equation*}
\widetilde \eta_2(z,b)=\tfrac 1 2 \eta_1(z,b) + \tfrac 1 2 \eta_2(z,b). 
\end{equation*}
\end{proposition}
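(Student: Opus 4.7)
The plan is to apply Proposition~\ref{prop: A1xA1} to decompose $B_{\mathtt C_2}(a,b)|_{\mathtt A_1 \times \mathtt A_1}$ as $\bigoplus_{(p,q)\in \Pset(a,b)} B_{\mathtt A_1 \times \mathtt A_1}(p\mathtt{t}_1 + q\mathtt{t}_2)$ and then count, inside each summand, the tableaux whose weight $\mu$ satisfies both $\mu(\hat h_1+\hat h_2)=0$ and $\mu(\hat h_1-\hat h_2)\equiv_4 0$. In the tableau realization, a tableau with $i_1$ entries $1$ and $j_1$ entries $2$ has $\mu(\hat h_1)=2i_1-p$ and $\mu(\hat h_2)=2j_1-q$, so setting $k:=\mu(\hat h_1)$ the first condition forces $\mu(\hat h_2)=-k$, and the second condition then reduces to $2k\equiv_4 0$, i.e.\ $k$ even. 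Since $k=2i_1-p$, this is equivalent to $p$ being even. Because $a-b$ is even, every $(p,q)\in\Pset(a,b)$ has $p+q\equiv a+b\equiv 0\pmod 2$, so $p$ and $q$ share the same parity. Hence, for each $(p,q)\in\Pset(a,b)$, either all $\min(p,q)+1$ weights counted in Proposition~\ref{prop:cn1} satisfy the extra condition (when $q$ is even) or none do (when $q$ is odd). This establishes the first equality.

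For the second equality I would split the sum by parity,
\[
\sum_{\substack{(p,q)\in\Pset(a,b)\\ q\equiv_2 0}}(\min(p,q)+1) = \tfrac{1}{2}\eta_1(z,b) + \tfrac{1}{2}S(a,b), \qquad S(a,b):=\sum_{(p,q)\in\Pset(a,b)}(-1)^q(\min(p,q)+1),
\]
using Proposition~\ref{prop:cn1} to identify the non-alternating part. The alternating sum $S(a,b)$ can be computed row-by-row inside the $(2z+1)\times(b+1)$ display~\eqref{eq: arrangement}: the $s$-th row ($0\le s\le 2z$) contributes terms whose $q$-values run through the consecutive integers $s,s+1,\dots,s+b$. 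For $s\le z$ we have $\min(p,q)+1=q+1$, giving the row contribution
\[
T(s,b):=\sum_{u=s}^{s+b}(-1)^u(u+1),
\]
while for $s\ge z+1$ one has $\min(p,q)+1=p+1=(q+1)+2(z-s)$, so the row contribution equals $T(s,b)+2(z-s)\sum_{u=s}^{s+b}(-1)^u$.

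A direct pairing of consecutive terms yields $T(s,b)=(-1)^{s+1}(b+1)/2$ with $\sum_u(-1)^u=0$ when $b$ is odd, and $T(s,b)=(-1)^s(s+b/2+1)$ with $\sum_u(-1)^u=(-1)^s$ when $b$ is even. Summing over $s=0,\dots,2z$ and using $\sum_{s=0}^{2z}(-1)^s=1$ together with $\sum_{s=0}^{2z}(-1)^s s = z$, the odd-$b$ case gives $S(a,b)=-(b+1)/2$; in the even-$b$ case one gets $S(a,b)=b/2+1+z+2(-1)^z\sum_{t=1}^z(-1)^{t+1}t$, and evaluating the inner alternating sum as $-z/2$ for $z$ even and $(z+1)/2$ for $z$ odd produces $S(a,b)=b/2+\delta(z\equiv_2 0)$. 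Both outputs match $\eta_2(z,b)$, so combining with $\tfrac{1}{2}\eta_1(z,b)$ yields $\widetilde\eta_2(z,b)$, finishing the proof. The only genuine obstacle is keeping the bookkeeping straight across the two regimes $s\le z$ and $s\ge z+1$ together with the parity cases for $b$ and $z$; the palindromic symmetry of~\eqref{eq: arrangement} tames this by reducing the whole evaluation to a single one-dimensional alternating sum.
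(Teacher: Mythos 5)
Your proof is correct. The first equality is established exactly as in the paper: restrict to the $\mathtt A_1\times\mathtt A_1$ decomposition of Proposition \ref{prop: A1xA1}, observe that on the zero-trace weights the condition $\mu(\hat h_1-\hat h_2)\equiv_4 0$ becomes a pure parity condition on $q$ (using $p\equiv_2 q$, which follows from $a-b$ even), and that the condition is satisfied by all $\min(p,q)+1$ relevant weights or by none. Where you diverge is in evaluating the restricted sum. The paper notes that the condition ``$q$ even'' picks out exactly the \emph{odd} entries of the right-hand array in \eqref{eq: arrangement} and sums those by tiling the array with $2\times 2$ blocks $\bigl(\begin{smallmatrix} i+1 & i\\ i & i-1\end{smallmatrix}\bigr)$, each contributing $2i$, followed by parity casework on $b$ and $z$. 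You instead write $\delta(q\equiv_2 0)=\tfrac12\bigl(1+(-1)^q\bigr)$, which splits the answer as $\tfrac12\eta_1+\tfrac12 S$ with $S=\sum_{(p,q)}(-1)^q(\min(p,q)+1)$, and then evaluate $S$ row by row as a one-dimensional alternating sum; your computations of $T(s,b)$, of $\sum_{s=0}^{2z}(-1)^s s=z$, and of the two regimes $s\le z$ and $s\ge z+1$ all check out against $\eta_2$. The indicator-function decomposition is arguably the more illuminating route: it identifies $\eta_2$ intrinsically as the signed sum over $\Pset(a,b)$ rather than merely verifying the total, which explains structurally why the answer has the shape $\tfrac12\eta_1+\tfrac12\eta_2$ (and the same mechanism underlies the analogous splittings $\widetilde\psi_2=\tfrac12\psi_1+\tfrac12\psi_2$ in Corollary \ref{cor-def-psi}). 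The cost is essentially the same amount of parity bookkeeping at the end, so neither method has a decisive computational advantage.
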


\begin{proof} Write $\mu=r\epsilon_1+s\epsilon_2$ for a weight of  $B_{\mathtt A_1 \times \mathtt A_1}(p \mathtt{t}_1+q \mathtt{t}_2)$ satisfying $r+s=0$. Then we have
\[  r-s=2r \equiv_4 0  \quad \Longleftrightarrow \quad r \equiv_2 s \equiv_2 q \equiv_2 0 . \] With this observation, our first assertion follows from the first paragraph in the proof of Proposition \ref{prop:cn1}.

Now let us consider the second assertion; i.e., the sum is equal to
$$\tfrac 1 2 \eta_1(z,b) + \tfrac 1 2 \eta_2(z,b).$$
Since $p \equiv_2 q$, the sum \eqref{eq: mod 4 first} amounts to the sum of odd integers in the right array of \eqref{eq: arrangement}.
For any block of $4$ integers of the  form
$$ \begin{matrix} i+1 & i \\ i & i-1 \end{matrix}
$$ in~\eqref{eq: arrangement}, the sum of odd integers is $2i$. By decomposing the right array in~\eqref{eq: arrangement} into as many such blocks of $4$ integers as possible in each of the cases
$b \equiv_2 0, 1$ and $z \equiv_2 0,1$, we can check the sum is equal to $\frac 1 2 \eta_1 + \frac 1 2 \eta_2$.

For example, if $b \equiv_2 1$ and $z \equiv_2 0$, the first $z$ rows of the array are decomposed into $\frac {b+1}2 \times \frac z 2$ blocks of $4$ integers, and the sum of odd integers from those blocks is
\[\tfrac 1 4  (b+1)  z (b+z+1). \] The sum of odd integers in the middle row is
\[ \tfrac 1 4 (b+1)  (b+2z+1), \]
and the total sum of odd integers is
\[  2 \times \tfrac 1 4 (b+1)  z (b+z+1) + \tfrac 1 4 (b+1) (b+2z+1) = \tfrac 1 2 (b+1)  (z^2+zb+2z+(b+1)/2), \] which is the same as $\frac 1 2 \eta_1 + \frac 1 2 \eta_2$ in this case.
\end{proof}

\begin{proposition}[$n=3$]\label{prop:cn3}
Assume that $a-b$ is even. Then the sum of weight multiplicities of the weights $\mu$ such that  $\mu(\hat h_1+\hat h_2)=0$ and $\mu(\hat h_1-\hat h_2) \equiv_6 0$ is given by
\begin{align}
\sum_{(p,q) \in \Pset(a,b) } \left(  \lfloor \min(p,q)/3 \rfloor +1 - \delta( \min(p,q)\equiv_3 1 ) \right), \label{eq: mod 6 first}
\end{align}
and the sum is equal to
\begin{equation*}
\widetilde \eta_3(z,b) =\tfrac 1 3 \eta_1(z,b)+ \tfrac 2 3 \eta_3(z,b). 
\end{equation*}
\end{proposition}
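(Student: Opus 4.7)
The plan is to mirror Propositions~\ref{prop:cn1} and~\ref{prop:cn2}: apply Proposition~\ref{prop: A1xA1} to reduce~\eqref{eq: mod 6 first} to a count, within each $B_{\mathtt A_1 \times \mathtt A_1}(p\mathtt{t}_1 + q\mathtt{t}_2)$ for $(p,q) \in \Pset(a,b)$, of elements whose weight $\mu = r\epsilon_1 + s\epsilon_2$ satisfies $r+s=0$ and $r-s \equiv_6 0$. Under $r+s=0$, the second condition becomes simply $r \equiv_3 0$.

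Using the tableau description~\eqref{eq: tableaux realization of A1A1 crystal}, parametrize weights by $r = 2k - p$ where $k$ is the number of $1$'s. Since $a-b$ is even, every $(p,q) \in \Pset(a,b)$ satisfies $p \equiv_2 q$, so the constraint $r+s=0$ admits exactly $\min(p,q)+1$ solutions and the corresponding values of $r$ sweep out $\{-m, -m+2, \ldots, m\}$ where $m := \min(p,q)$. Writing $r = -m+2i$, the condition $r \equiv_3 0$ becomes $i \equiv_3 2m$, and a three-line case analysis on $m \bmod 3$ yields the count $\lfloor m/3 \rfloor + 1 - \delta(m \equiv_3 1)$, which establishes the first equality in~\eqref{eq: mod 6 first}.

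For the second equality, I would use the telescoping identity
$$f(m) + f(m-1) + f(m-2) = m, \qquad f(m) := \lfloor m/3 \rfloor + 1 - \delta(m \equiv_3 1),$$
which is immediate from the three residue classes. By~\eqref{eq: arrangement}, each row of the array is a block of $b+1$ consecutive integers, and the array is symmetric about its middle row $i = z+1$, so the sum~\eqref{eq: mod 6 first} equals $\sum_{j=z}^{b+z} f(j) + 2\sum_{i=1}^{z}\sum_{j=i-1}^{b+i-1} f(j)$. The telescoping identity reduces any window of $b+1$ consecutive values of $f$ to a closed form governed by $b$ and the starting residue modulo $3$.

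The main obstacle will be the resulting bookkeeping, which naturally splits into subcases indexed by $b \bmod 6$ and $z \bmod 3$, matching the organization of Table~\ref{tab-2}. In each subcase, I would write each window sum $\sum_{j=i-1}^{b+i-1} f(j)$ as an explicit expression polynomial in $b$ and $i$ (its shape depending only on $(i-1) \bmod 3$), sum over $i$, and combine with $\sum_{(p,q) \in \Pset(a,b)} \min(p,q) = \eta_1(z,b) - (2z+1)(b+1)$ obtained from Proposition~\ref{prop:cn1}. Verifying case by case that the total agrees with $\tfrac{1}{3}\eta_1(z,b) + \tfrac{2}{3}\eta_3(z,b)$ with $\eta_3$ read from Table~\ref{tab-2} then completes the proof.
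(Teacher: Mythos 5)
Your proposal is correct and follows essentially the same route as the paper: the same reduction via Proposition~\ref{prop: A1xA1} and the residue count $\lfloor m/3\rfloor+1-\delta(m\equiv_3 1)$ for the first equality, and the same telescoping identity $f(i)+f(i-1)+f(i-2)=i$ applied to the array \eqref{eq: arrangement} for the second, differing only in whether the leftover bookkeeping is organized by rows or by blocks of three consecutive entries.
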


\begin{proof} Consider $(p,q) \in \Pset(a,b)$, and without loss of generality assume that $p \ge q$. Write $\mu=r\epsilon_1+s\epsilon_2$ for a weight of  $B_{\mathtt A_1 \times \mathtt A_1}(p \mathtt{t}_1+q \mathtt{t}_2)$ satisfying $r+s=0$.
Explicitly, the set of such weights $(r,s)$ is
\begin{equation} \label{qq}  \{ (-q,q), (-q+2, q-2), \dots , (q-2, -q+2), (q, -q) \}.\end{equation}
Since $r+s=0$, the condition $r-s \equiv_6 0$ is equivalent to $s \equiv_3 0$.
In each case of $q \equiv_3 0,1,2$, we count the number of pairs $(-s,s)$ in \eqref{qq} such that $s \equiv_3 0$, and it is equal to
\[ \begin{cases} q/3 +1 & \text{ if } q \equiv_3 0, \\ (q-1)/3 & \text{ if } q \equiv_3 1, \\ (q+1)/3 & \text{ if } q \equiv_3 2. \end{cases} \]
This justifies the expression in \eqref{eq: mod 6 first}.

Now we have to add the numbers $\lfloor (e-1)/3 \rfloor +1 - \delta( (e-1)\equiv_3 1)$, where $e$ runs over the right array in~\eqref{eq: arrangement}. Observe that, for any consecutive 3 integers $ i+1, \ i, \ i-1 $, we have
$$ i =  \sum^{i+1}_{e=i-1} \lfloor (e-1)/3 \rfloor +1 - \delta( (e-1)\equiv_3 1).$$
By decomposing the right array in~\eqref{eq: arrangement} into blocks of $3$-consecutive integers as many as possible, we can check the sum is equal to $\frac 1 3 \eta_1 + \frac 2 3 \eta_3$.
\end{proof}

\begin{proposition}[$n=4$]\label{prop:cn4}
Assume that $a-b$ is even. Then the sum of weight multiplicities  of the weights $\mu$ such that  $\mu(\hat h_1+\hat h_2)=0$ and $\mu(\hat h_1-\hat h_2) \equiv_8 0$ is given by
\begin{equation}
\sum_{\substack{ (p,q) \in \Pset(a,b) \\ q \, \equiv_2 \, 0  }  }  (    2\times \lfloor \min(p,q)/4 \rfloor  +1) \label{eq: mod 8 first},
\end{equation}
and the sum is equal to
\begin{equation*}
\widetilde \eta_4(z,b)= \tfrac 1 4 \eta_1(z,b)+ \tfrac 1 4 \eta_2(z,b) + \tfrac 1 2 \eta_4(z,b). 
\end{equation*}
\end{proposition}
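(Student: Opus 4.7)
The argument parallels Propositions \ref{prop:cn2} and \ref{prop:cn3} and proceeds in two steps.

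First, to justify \eqref{eq: mod 8 first}, I would apply Proposition \ref{prop: A1xA1} to reduce to analyzing each summand $B_{\mathtt A_1 \times \mathtt A_1}(p\mathtt{t}_1 + q\mathtt{t}_2)$. Writing a weight as $\mu = r\epsilon_1 + s\epsilon_2$, the condition $\mu(\hat h_1+\hat h_2) = r+s = 0$ forces $r = -s$, and then $\mu(\hat h_1-\hat h_2) = -2s$, so $r - s \equiv_8 0$ is equivalent to $s \equiv_4 0$. Assuming without loss of generality that $q \le p$, the admissible values of $s$ lie in the set \eqref{qq}, and the number of them divisible by $4$ is $0$ when $q$ is odd and $2\lfloor q/4 \rfloor + 1$ when $q$ is even; summing over $(p,q) \in \Pset(a,b)$ yields \eqref{eq: mod 8 first}.

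For the second step I would parametrize $(p,q) \in \Pset(a,b)$ by $(r,s)$ with $(p,q) = (a-r-s,\, b-r+s)$, $0 \le r \le b$, $0 \le s \le 2z$. Then $\min(p,q) = b - r + s$ for $s \le z$ and $\min(p,q) = b + 2z - r - s$ for $s \ge z$, and the condition $q \equiv_2 0$ becomes $r \equiv_2 b + s$, which automatically forces $\min(p,q)$ to be even. The involution $s \leftrightarrow 2z - s$ interchanges $p$ and $q$, preserving both the parity condition and $\min(p,q)$, so the sum folds onto $0 \le s \le z$ with a doubling factor on $s < z$. For fixed $s$, the surviving inner sum is over even values of $\min$ in $\{s, s+1, \ldots, b+s\}$, which can be evaluated in closed form using the auxiliary identity $\sum_{k=0}^{N-1}(2\lfloor k/2\rfloor + 1) = \lfloor N^2/2 \rfloor + (N \bmod 2)$. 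Summing these inner sums over $s$ produces a piecewise-polynomial expression in $b$ and $z$ which is then matched against $\tfrac 1 4 \eta_1 + \tfrac 1 4 \eta_2 + \tfrac 1 2 \eta_4$.

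The principal obstacle is the casework. Since $\eta_4$ is defined by Table \ref{tab-2} in terms of the residues of $z$ and $b$ modulo $4$, and the floor function $\lfloor \cdot/4 \rfloor$ already introduces $4$-periodicity in the summand, the matching requires splitting into several subcases according to the residues of $b$ and $z$. I would first verify the identity on small examples such as $(a,b) = (2,2), (4,2), (4,4), (6,4)$ to fix the correct block decomposition, then, within each subcase, expand the closed-form expressions and compare coefficients against the explicit formulas for $\eta_1, \eta_2$ and the tabulated values of $\eta_4$, just as in the analogous arguments for Propositions \ref{prop:cn2} and \ref{prop:cn3}.
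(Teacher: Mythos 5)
Your proposal is correct, and its first half coincides with the paper's argument: both reduce via Proposition \ref{prop: A1xA1} to counting, inside each $B_{\mathtt A_1 \times \mathtt A_1}(p\mathtt t_1+q\mathtt t_2)$, the weights $(-s,s)$ with $s\equiv_4 0$, which gives $2\lfloor q/4\rfloor+1$ for $q$ even and $0$ for $q$ odd, hence \eqref{eq: mod 8 first}. Where you diverge is in evaluating the resulting sum. The paper works directly with the right-hand array in \eqref{eq: arrangement} and decomposes it into $2\times 4$ blocks of the form $i+2,i+1,i,i-1$ over $i+1,i,i-1,i-2$; the key observation is that the summand $(2\lfloor(e-1)/4\rfloor+1)\,\delta(e-1\equiv_2 0)$ totals exactly $2i$ over such a block, i.e.\ one quarter of the block's contribution to $\eta_1$, which makes the $\tfrac14\eta_1$ term structurally visible and relegates $\tfrac14\eta_2+\tfrac12\eta_4$ to boundary corrections. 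You instead fold the array by the involution $s\leftrightarrow 2z-s$ (the top--bottom symmetry of \eqref{eq: arrangement}) and evaluate each row in closed form via the partial-sum identity for $2\lfloor k/2\rfloor+1$, then match the resulting piecewise polynomial against $\tfrac14\eta_1+\tfrac14\eta_2+\tfrac12\eta_4$. Both routes are valid and both terminate in the same residue casework on $b$ and $z$ modulo $4$ (which neither you nor the paper writes out in full); the paper's block trick explains \emph{why} the coefficient $\tfrac14$ appears, while your closed-form summation is more mechanical and arguably easier to verify or automate. Your parenthetical claims check out: $p\equiv_2 q$ always holds since $p+q=a+b-2r$ is even, so $q\equiv_2 0$ does force $\min(p,q)$ even, and the auxiliary identity $\sum_{k=0}^{N-1}(2\lfloor k/2\rfloor+1)=\lfloor N^2/2\rfloor+(N\bmod 2)$ is correct.
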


\begin{proof} As in the proof of Proposition \ref{prop:cn3}, consider $(p,q) \in \Pset(a,b)$ with $p \ge q$ and the set \eqref{qq}.
Since $r+s=0$, the condition $r-s \equiv_8 0$ is equivalent to $s \equiv_4 0$. If $s \equiv_4 0$, then we must have $q \equiv_2 0$. Thus,
in each case of $q \equiv_4 0,2$, we count the number of pairs $(-s,s)$ in \eqref{qq} such that $s \equiv_4 0$, and it is equal to
\[ 2\times \lfloor q/4 \rfloor  +1. \]
This justifies the expression in \eqref{eq: mod 8 first}.

We need to add the numbers $ ( 2 \times \lfloor (e-1)/4 \rfloor +1) \times \delta(  (e-1) \equiv_2 0)$,  where $e$ runs over the right array in~\eqref{eq: arrangement}. Observe that, for 8-integers in any block of the form
$$\begin{matrix}
i+2 & i+1 &  i &  i-1 \\
i+1 & i &  i-1 &  i-2 \end{matrix}$$  in~\eqref{eq: arrangement}, we have
$$ 2i =  \sum_{u=0}^1\sum^{i+1+u}_{e=i-2+u} ( 2 \times \lfloor (e-1)/4 \rfloor +1) \times \delta(  (e-1) \equiv_2 0).$$

By decomposing the right array in~\eqref{eq: arrangement} into as many such blocks of $8$ integers as possible in each of the cases $b \equiv_4 0, 1,2,3$ and $z \equiv_4 0,1,2,3$, we can see that the sum is equal to $\frac 1 4 \eta_1 + \frac 1 4 \eta_2 + \frac 1 2 \eta_4$.
\end{proof}

\begin{proposition}[$n=6$]\label{prop:cn6}
Assume that $a-b$ is even. Then the sum of weight multiplicities of the weights $\mu$ such that  $\mu(\hat h_1+\hat h_2)=0$ and $\mu(\hat h_1-\hat h_2) \equiv_{12} 0$ is given by
\begin{equation}
\sum_{\substack{ (p,q) \in \Pset(a,b) \\ q \, \equiv_2 \, 0 }  }  (    2\times \lfloor \min(p,q)/6 \rfloor  +1)  \label{eq: mod 12 first},
\end{equation}
and the sum is equal to
\begin{equation*}
\widetilde \eta_6(z,b)=\tfrac 1 6 \eta_1(z,b)+ \tfrac 1 6 \eta_2(z,b) + \tfrac 1 3 \eta_3(z,b) +\tfrac 1 3 \eta_6(z,b).  
\end{equation*}
\end{proposition}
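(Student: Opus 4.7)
The proof will follow the two-step template established in the proofs of Propositions~\ref{prop:cn2} and~\ref{prop:cn4}, adapted to the modulus $12$ arising from $n=6$.

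For the first assertion, I would fix $(p,q)\in\Pset(a,b)$ and assume without loss of generality $p\ge q$. The weights $\mu=r\epsilon_1+s\epsilon_2$ of $B_{\mathtt A_1\times \mathtt A_1}(p\mathtt t_1+q\mathtt t_2)$ satisfying $\mu(\hat h_1+\hat h_2)=r+s=0$ are exactly the pairs $(-s,s)$ with $s\in\{-q,-q+2,\dots,q-2,q\}$, so in particular $s\equiv_2 q$. Under $r+s=0$, the congruence $r-s\equiv_{12}0$ reduces to $s\equiv_6 0$, which forces $q\equiv_2 0$. Counting the multiples of $6$ in $\{-q,-q+2,\dots,q\}$ for even $q$ then yields $2\lfloor q/6\rfloor+1$, and summing over the components in Proposition~\ref{prop: A1xA1} produces the expression \eqref{eq: mod 12 first}.

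For the closed form, I plan to add up the contributions $(2\lfloor(e-1)/6\rfloor+1)\,\delta((e-1)\equiv_2 0)$ over all entries $e$ of the right array in \eqref{eq: arrangement}. A direct check shows that for any $2\times 6$ sub-block of the form
$$\begin{matrix} i+3 & i+2 & i+1 & i & i-1 & i-2 \\ i+2 & i+1 & i & i-1 & i-2 & i-3 \end{matrix}$$
the twelve contributions sum to exactly $2i$, regardless of $i\bmod 12$ (this is a short case split on the parity of $i$, in the same spirit as the $n=4$ verification). I would then decompose the $(2z+1)\times(b+1)$ array into as many such $2\times 6$ blocks as possible, singling out the middle row (of length $b+1$, where the parity of $z$ matters) and the residual columns along the right-hand edge (whose width depends on $b\bmod 6$). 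Collecting the block contributions, the middle-row contribution, and the residual contributions, and checking against the values in Table~\ref{tab-2}, should give precisely $\tfrac16\eta_1+\tfrac16\eta_2+\tfrac13\eta_3+\tfrac13\eta_6=\widetilde\eta_6$.

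The main technical obstacle is the sheer size of the case analysis: modulus $12$ creates up to $36$ residue pairs $(b\bmod 6,z\bmod 6)$ to verify. To tame this, I would subtract from \eqref{eq: mod 12 first} the weighted contributions $\tfrac16\eta_1$, $\tfrac16\eta_2$ and $\tfrac13\eta_3$ already known from Propositions~\ref{prop:cn1}--\ref{prop:cn3}, reducing the task to checking that the residual equals $\tfrac13\eta_6$ on the residue classes of Table~\ref{tab-2}. Because $\eta_6$ is periodic of period $6$ in each argument and the indicator $2\lfloor\min(p,q)/6\rfloor+1$ jumps only when $\min(p,q)$ crosses a multiple of $6$, the residual verification reduces to a finite mechanical check whose bookkeeping is dictated by where the array boundary falls within a block of size $2\times 6$.
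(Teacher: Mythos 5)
Your proposal is correct and follows essentially the same route as the paper: the same reduction of $r-s\equiv_{12}0$ to $s\equiv_6 0$ (hence $q\equiv_2 0$) giving the count $2\lfloor q/6\rfloor+1$ on each $\mathtt A_1\times\mathtt A_1$ component, and the same decomposition of the array \eqref{eq: arrangement} into $2\times 6$ blocks each contributing $2i$. The suggestion of subtracting the already-known $\tfrac16\eta_1+\tfrac16\eta_2+\tfrac13\eta_3$ is only an organizational variant of the paper's final case check.
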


\begin{proof} As in the proof of Proposition \ref{prop:cn3}, consider $(p,q) \in \Pset(a,b)$ with $p \ge q$ and the set \eqref{qq}.
Since $r+s=0$, the condition $r-s \equiv_{12} 0$ is equivalent to $s \equiv_6 0$. If $s \equiv_6 0$, then $q \equiv_2 0$. Thus,
in each case of $q \equiv_6 0,2, 4$, we count the number of pairs $(-s,s)$ in \eqref{qq} such that $s \equiv_6 0$, and it is equal to
\[ 2\times \lfloor q/6 \rfloor  +1. \]
This justifies the expression in \eqref{eq: mod 12 first}.

We have to add the numbers $ ( 2 \times \lfloor (e-1)/6 \rfloor +1) \times \delta(  (e-1) \equiv_2 0)$,  where $e$ runs over the right array in~\eqref{eq: arrangement}. Observe that, for any 12-integers of the form
$$ \begin{matrix}
i+3 & i+2 & i+1 &  i &  i-1 & i-2\\
i+2 & i+1 & i &  i-1 &  i-2 & i-3\end{matrix}$$  in~\eqref{eq: arrangement}, we have
$$ 2i=  \sum_{u=0}^1\sum^{i+2+u}_{e=i-3+u} ( 2 \times \lfloor (e-1)/6 \rfloor +1) \times \delta(  (e-1) \equiv_2 0).$$

By decomposing the right array in~\eqref{eq: arrangement} into as many such blocks of $12$ integers as possible in each of the cases, we see that the sum is equal to $\frac 1 6 \eta_1 + \frac 1 6 \eta_2 + \frac 1 3 \eta_3 + \frac 1 3 \eta_6$.
\end{proof}

In the next subsection, we need the following.
\begin{corollary} \label{cor:cn}
Assume that $a-b$ is even. Then we have
\begin{align*}
\widetilde \psi_1(z,b)&= (-1)^b \sum_{(p,p) \in \Pset(a,b)}( p+1) , \allowdisplaybreaks\\
\widetilde \psi_2(z,b)&= (-1)^b \sum_{\substack{ (p,p) \in \Pset(a,b) \\ p\, \equiv_2 \, 0 }  }  ( p +1),\allowdisplaybreaks\\
\widetilde \psi_3(z,b) &= (-1)^b \sum_{(p,p) \in \Pset(a,b) } \left(  \lfloor p/3 \rfloor +1 - \delta( p \equiv_3 1 ) \right), \allowdisplaybreaks\\
\widetilde \psi_4(z,b)&= (-1)^b \sum_{\substack{ (p,p) \in \Pset(a,b) \\ p \, \equiv_2 \, 0  }  }  (    2\times \lfloor p/4 \rfloor  +1), \allowdisplaybreaks\\
\widetilde \psi_6(z,b)&= (-1)^b \sum_{\substack{ (p,p) \in \Pset(a,b) \\ p \, \equiv_2 \, 0 }  }  (    2\times \lfloor p/6 \rfloor  +1),
\end{align*}
where $\widetilde \psi_i(z,b)$ are defined in Corollary \ref{cor-def-psi}.
\end{corollary}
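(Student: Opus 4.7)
The plan is to interpret $\widetilde\psi_n(z, b)$ as the trace of $J$ on the $C_n$-invariant subspace of $V_{\mathtt C_2}(a, b)$, and then use the $\mathtt A_1 \times \mathtt A_1$-branching to reduce the sum to the diagonal of $\Pset(a, b)$. From Corollary~\ref{cor-def-psi}, the dual Cauchy identity applied to $J\gamma$ (whose eigenvalues come in the required inverse pairs) gives
$$
\widetilde\psi_n(z, b) \;=\; \int_{C_n} \chi^{\Sp(4)}_{(a, b)}(J\gamma)\, d\gamma.
$$
Since $J$ normalizes $C_n$, the standard projector argument (compare Proposition~\ref{prop-main}) identifies this integral with $\operatorname{tr}\bigl(J|_{V_{\mathtt C_2}(a, b)^{C_n}}\bigr)$. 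Because the maximal torus of $\mathtt A_1 \times \mathtt A_1$ coincides with that of $\mathtt C_2$, Proposition~\ref{prop: A1xA1} splits the invariant subspace as $\bigoplus_{(p, q) \in \Pset(a, b)} V_{\mathtt A_1 \times \mathtt A_1}(p\mathtt{t}_1 + q\mathtt{t}_2)^{C_n}$.

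The key observation is that conjugation by $J$ swaps the two $\mathtt A_1$-factors (one checks $J\hat h_1 J^{-1} = -\hat h_2$ and $J\hat e_1 J^{-1} = -\hat f_2$), so $J$ sends $V_{\mathtt A_1 \times \mathtt A_1}(p\mathtt{t}_1 + q\mathtt{t}_2)$ to $V_{\mathtt A_1 \times \mathtt A_1}(q\mathtt{t}_1 + p\mathtt{t}_2)$. When $p \ne q$ these are distinct multiplicity-free summands by Corollary~\ref{cor-c2-a1a1}, so the corresponding block of $J$ is off-diagonal and contributes zero to the trace; only the diagonal pairs $(p, p) \in \Pset(a, b)$ survive, and by Remark~\ref{rmk: p=q line} they are indexed by $p = z, z+1, \dots, z+b$. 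Inside each $V_{\mathtt A_1 \times \mathtt A_1}(p\mathtt{t}_1 + p\mathtt{t}_2) \cong V_{\mathtt A_1}(p) \otimes V_{\mathtt A_1}(p)$, the $C_n$-invariant weight basis $\{u_i \otimes u_{p-i} : 0 \le i \le p,\ p - 2i \equiv_n 0\}$ has cardinality equal to the diagonal $(p, p)$-summand in Propositions~\ref{prop:cn1}--\ref{prop:cn6}, matching term by term the counts that appear in the statement.

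The remaining step, which is the main obstacle, is to show that $J$ acts as the scalar $(-1)^b$ on every such $C_n$-invariant weight vector. Since $J$ commutes with $\hat h_1 - \hat h_2$ and $J^2 = I$, it acts by $\pm 1$ on each one-dimensional weight space of weight $(r, -r)$ in $V_{\mathtt A_1 \times \mathtt A_1}(p\mathtt{t}_1 + p\mathtt{t}_2)$; the unique intertwiner of $V_{\mathtt A_1}(p) \otimes V_{\mathtt A_1}(p)$ with its Chevalley-twisted swap of factors acts on $u_i \otimes u_{p-i}$ as $(-1)^p$, so $J$ acts as $(-1)^p\,\lambda(p, a, b)$ for a scalar $\lambda(p, a, b)$ determined by the embedding $V_{\mathtt A_1 \times \mathtt A_1}(p\mathtt{t}_1 + p\mathtt{t}_2) \hookrightarrow V_{\mathtt C_2}(a, b)$ and readable from the $J$-action on the highest weight vector of the summand. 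The required identity $\lambda(p, a, b) = (-1)^{b-p}$ can be verified by an induction on $a+b$ via the tensor product decompositions~\eqref{eq: C_2-1}--\eqref{eq: C_2-2}, with base cases $V_{\mathtt C_2}(0, 0)$ and $V_{\mathtt C_2}(1, 1)$, where the sign is read directly from the explicit $4 \times 4$ matrix of $J$ (for instance $J(v_1 \wedge v_2) = v_3 \wedge v_4$ and $J(v_1 \wedge v_3 - v_2 \wedge v_4) = -(v_1 \wedge v_3 - v_2 \wedge v_4)$ in $V_{\mathtt C_2}(1, 1) \subset \wedge^2 V_{\mathtt C_2}(1,0)$). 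Once this sign is pinned down, $J = (-1)^b I$ on each $V_{\mathtt A_1 \times \mathtt A_1}(p\mathtt{t}_1 + p\mathtt{t}_2)^{C_n}$, and summing $(-1)^b$ times the diagonal counts yields all five identities of the corollary simultaneously.
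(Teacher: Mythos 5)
Your argument has a genuine circularity problem. You launch the proof from the identity $\widetilde\psi_n(z,b)=\int_{C_n}\chi^{\Sp(4)}_{(a,b)}(J\gamma)\,d\gamma$, citing Corollary \ref{cor-def-psi}. But in the paper's logical structure Corollary \ref{cor-def-psi} is extracted from the proof of Theorem \ref{thm-identities}, which in turn rests on \eqref{theta-11}, i.e.\ on the branching result for $J(C_n)$ established in Propositions \ref{prop-J-imp} and \ref{prop-J-imp-2} --- and Proposition \ref{prop-J-imp-2} explicitly invokes Corollary \ref{cor:cn}, the very statement you are proving. The functions $\psi_n$ (hence $\widetilde\psi_n$) are \emph{defined} by the explicit formulas \eqref{def-psi-1}, \eqref{def-psi-2} and Table \ref{tab-3}; the only route in the paper from those explicit formulas to the integral $\int_{C_n}\chi_{(a,b)}(J\gamma)\,d\gamma$ passes through the identity $\prod_i(x_i^2+\kappa_n+x_i^{-2})=\sum\psi_n\chi^{\Sp(2m)}_{(2^{m-b-2z},1^{2z})}$, whose proof uses the corollary. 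So your first displayed equation assumes what is downstream of the conclusion, and you would need an independent proof of it to proceed.

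Beyond that, the approach is much heavier than the statement requires, and its hardest step is only sketched. The corollary is an identity between two \emph{explicitly given} functions of $(z,b)$: the left side is a fixed linear combination of the piecewise formulas for $\psi_1,\dots,\psi_6$, and the right side is a finite sum over the diagonal $p=z,z+1,\dots,z+b$ (Remark \ref{rmk: p=q line}); the paper simply verifies equality case by case on congruence classes of $z$ and $b$ (e.g.\ for $\widetilde\psi_1$ one checks $\sum_{p=z}^{z+b}(p+1)=(b+1)(z+b/2+1)$). Your trace argument instead requires showing that $J$ acts by the scalar $(-1)^b$ on the relevant weight vectors of each $(p,p)$-summand; you assert this can be pinned down ``by an induction on $a+b$,'' but this is precisely the nontrivial content of Proposition \ref{prop-J-imp}, which the paper proves separately and carefully via Lemma \ref{vpq}, and which your sketch does not actually carry out. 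In short: the conceptual picture (off-diagonal summands swapped by $J$, diagonal summands scaled by $(-1)^b$) is the right intuition for \emph{why} the corollary is true, but as a proof of this particular statement it is both circular and incomplete, whereas a direct computation suffices.
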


\begin{proof}
After dividing cases according to the congruence classes of $b$ and $z$, the computation becomes straightforward in each case. For example, consider $\widetilde \psi_2(z,b)$ and
assume that $b$ and $z$ are both even. Then the sum in the right hand side is equal to
\[  (z+b/2+1) (b/2+1). \] On the other hand, in this case, $\psi_1(z,b)=(b+1)(z+b/2+1)$ and $\psi_2(z,b)=z+b/2+1$. Thus\[ \widetilde \psi_2(z,b) = \tfrac 1 2 \psi_1(z,b) + \tfrac 1 2 \psi_2(z,b) = (z+b/2+1) (b/2+1).\]
The other cases are similar.
\end{proof}

\subsection{Sato--Tate groups $J(C_n)$} \label{subsec-J-Cn}

Let $J={\scriptsize \begin{pmatrix} 0&0&0&1\\0&0&-1&0\\0&-1&0&0\\1&0&0&0 \end{pmatrix}}$.
For the branching rules for $J(C_n)$, we need more information than the crystal isomorphism in Proposition \ref{prop: A1xA1}.
Let $V$ and $W \subset \bigwedge^2 V$ be the fundamental representations of $\Sp(4, \mathbb C)$. Take a basis $\{ v_1, v_2,v_{\Oone},v_{\Otwo} \}$  of $V$ such that
\begin{equation}
\label{eqn-Jv} Jv_1=v_{\Otwo}, \quad Jv_2= - v_{\Oone}, \quad Jv_{\Oone}=-v_2 \quad \text{ and } \quad  Jv_{\Otwo}=v_1.
\end{equation}
Write
\begin{align*}
w_{12} &= v_1 \wedge v_2, \quad  w_{1\Oone}=v_1 \wedge v_{\Oone}, \quad w_{1\Otwo}=v_1 \wedge v_{\Otwo}, \\
w_{2\Oone}&=v_2 \wedge v_{\Oone}, \quad w_{2\Otwo}=v_2 \wedge v_{\Otwo}, \quad w_{\Oone \; \hspace{-.2em} \Otwo} = v_{\Oone}\wedge v_{\Otwo},
\end{align*} and take the basis  $\{w_{12}, w_{1\Otwo}, w_{2\Oone}, w_{\Oone \Otwo}, w_{2\Otwo}-w_{1\Oone} \}$ for $W$. Then we have
\begin{align*}
J(w_{12}) &=w_{\Oone \; \hspace{-.2em} \Otwo}, \quad J (w_{1\Otwo})= - w_{1 \Otwo}, \quad J(w_{2\Oone})= -w_{2\Oone}, \\
J(w_{\Oone \; \hspace{-.2em} \Otwo}) &= w_{12} \quad \text{ and } \quad J(w_{2\Otwo} -w_{1\Oone})= - w_{2\Otwo} + w_{1\Oone} .
\end{align*}

We realize the representation $V_{\mathtt C_2}(a,b)$ for a partition $(a,b)$ as the irreducible component of $\mathrm{Sym}^{a-b}\, V \otimes \mathrm{Sym}^b\, W$ generated by the highest weight vector $v_1^{a-b} \otimes w_{12}^b$. In particular, $V= V_{\mathtt C_2}(1,0)$ and $W= V_{\mathtt C_2}(1,1)$.
We identify $V_{\mathtt C_2}(a+1,b+1)$ with the image of the embedding \[ \iota_{a+1,b+1} :V_{\mathtt C_2}(a+1,b+1) \hookrightarrow V_{\mathtt C_2}(a,b) \otimes V_{\mathtt C_2}(1,1)\] given by \[v_1^{a-b} \otimes w_{12}^{b+1} \mapsto (v_1^{a-b} \otimes w_{12}^b )\otimes w_{12}. \]

Let $V_{\mathtt A}(p,q)$ be the representation of $\mathtt A_1 \times \mathtt A_1$ with highest weight $p\mathtt t_1 + q \mathtt t_2$. When $a-b$ is even, we inductively specify $\mathtt A_1 \times \mathtt A_1$-highest weight vectors $v_{(p,q;a,b)}$ in $V_{\mathtt C_2}(a,b)$ to describe
the decomposition \[V_{\mathtt C_2}(a,b) \big |_{\mathtt A_1 \times \mathtt A_1}\cong \soplus_{(p,q) \in \Pset(a,b)} V_{\mathtt A}(p,q)\] in what follows.

For $V_{\mathtt C_2}(a,0)$ with $a$ even, let $v_{(p,q;a,0)}\seteq v_1^pv_2^q$ for $(p,q) \in \Pset(a,0)$.
Then one can see that $v_{(p,q;a,0)}$ are $\mathtt A_1 \times \mathtt A_1$-highest weight vectors with highest weights $p \mathtt t_1 + q \mathtt t_2$. For the  induction, assume that
$v_{(p,q;a,b)}$ are $\mathtt A_1 \times \mathtt A_1$-highest weight vectors in $V_{\mathtt C_2}(a,b)$ with weights $p \mathtt t_1 + q \mathtt t_2$ for $(p,q) \in \Pset(a,b)$ with $a-b$ even.

Consider the following subset of $\Pset(a,b)$
\begin{equation} \label{phiA}
\phi_{\mathtt A}(a,b):= \{ (a,b), (a-1,b+1), (a-2,b+2), \dots , (b+1, a-1), (b,a) \},
\end{equation}
by taking $r=0$ in~\eqref{eq: Phi}. Now the lemma below completes the induction process.

\begin{lemma} \label{vpq} \hfill

{\rm (1)} For $(p,q) \in \phi_{\mathtt A}(a,b)$, the vectors \[v_{(p,q;a+1,b+1)} = v_1^{p-(b+1)}v_2^{q-(b+1)} \otimes w_{12}^{b+1}\] are $\mathtt A_1 \times \mathtt A_1$-highest weight vectors in $V_{\mathtt C_2}(a+1,b+1)$ with weights $p \mathtt t_1 + q\mathtt t_2$.

{\rm (2)}
For each $(p,q) \in \Pset(a,b) \setminus \phi_{\mathtt A}(a,b)$, there is an $\mathtt A_1 \times \mathtt A_1$-highest weight vector $v_{(p,q;a+1,b+1)}$ of the form
\[ v_{(p,q;a,b)}\otimes (w_{2\Otwo}-w_{1\Oone})+u_1\otimes w_{12}+u_2 \otimes w_{1\Otwo} +u_3 \otimes w_{2 \Oone}+u_4 \otimes w_{\Oone \Otwo} \]
 for some $u_1, u_2, u_3, u_4 \in V_{\mathtt C_2}(a,b)$, where $V_{\mathtt C_2}(a+1,b+1)$ is identified with the image of $\iota_{a+1,b+1}$ in $V_{\mathtt C_2}(a,b) \otimes V_{\mathtt C_2}(1,1)$.
\end{lemma}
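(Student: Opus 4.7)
The plan for part~(1) is a direct verification. Each of $v_1, v_2$, and $w_{12} = v_1 \wedge v_2$ is annihilated by $\hat{e}_1$ and $\hat{e}_2$ (by weight for $v_1, v_2$, and via the Leibniz rule $\hat{e}_i w_{12} = \hat{e}_i v_1 \wedge v_2 + v_1 \wedge \hat{e}_i v_2 = 0$ for $w_{12}$), so the monomial $v_1^{p-(b+1)} v_2^{q-(b+1)} \otimes w_{12}^{b+1}$ is $\mathtt A_1 \times \mathtt A_1$-highest weight; summing the $\hat{h}_i$-eigenvalues of the three factors gives the weight $p\mathtt{t}_1 + q\mathtt{t}_2$. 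To confirm the vector lies in $V_{\mathtt C_2}(a+1,b+1)$ rather than elsewhere in $\mathrm{Sym}^{a-b}V \otimes \mathrm{Sym}^{b+1}W$, I would apply the $\mathfrak{sp}_4$-lowering operator $f_1 = E_{21} - E_{34}$ iteratively to the highest weight vector $v_1^{a-b} \otimes w_{12}^{b+1}$. Since $f_1 v_1 = v_2$, $f_1 v_2 = 0$, and $f_1 w_{12} = v_2 \wedge v_2 = 0$, the $k$-th iterate is a nonzero scalar multiple of $v_1^{a-b-k}v_2^{k} \otimes w_{12}^{b+1}$; these remain in $V_{\mathtt C_2}(a+1,b+1)$ by $\mathfrak{sp}_4$-invariance and exhaust the asserted family.

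For part~(2), the strategy is to use the $\Sp(4)$-equivariant projection
\[ \pi \colon V_{\mathtt C_2}(a,b) \otimes V_{\mathtt C_2}(1,1) \twoheadrightarrow V_{\mathtt C_2}(a+1,b+1) \]
coming from \eqref{eq: C_2-2}. A short computation shows that $w_{2\Otwo}-w_{1\Oone}$ is killed by $\hat{e}_1, \hat{e}_2$ and has $\hat{h}_1, \hat{h}_2$-eigenvalues both $0$, so it spans the trivial $\mathtt A_1 \times \mathtt A_1$-summand in the decomposition $W \cong V_{\mathtt A}(\mathtt{t}_1+\mathtt{t}_2) \oplus V_{\mathtt A}(0)$. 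Combined with the inductive hypothesis that $v_{(p,q;a,b)}$ is $\mathtt A_1 \times \mathtt A_1$-highest weight of weight $p\mathtt{t}_1+q\mathtt{t}_2$ in $V_{\mathtt C_2}(a,b)$, the tensor $v_{(p,q;a,b)} \otimes (w_{2\Otwo}-w_{1\Oone})$ is $\mathtt A_1 \times \mathtt A_1$-highest weight of the same weight in the tensor product, and the $\pi$-equivariance transports this property into $V_{\mathtt C_2}(a+1,b+1)$. Writing the $\pi$-image as the original vector minus its projections onto the four other summands of \eqref{eq: C_2-2}, and expanding those corrections in the $W$-basis $\{w_{12}, w_{1\Otwo}, w_{2\Oone}, w_{\Oone \Otwo}\}$, yields an element of the prescribed form with $u_i \in V_{\mathtt C_2}(a,b)$.

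The main obstacle is the non-vanishing of $\pi(v_{(p,q;a,b)} \otimes (w_{2\Otwo}-w_{1\Oone}))$, without which there is no vector of the claimed form (with coefficient $1$ on $v_{(p,q;a,b)} \otimes (w_{2\Otwo}-w_{1\Oone})$ after rescaling). For this I plan to use a multiplicity count anchored in Proposition~\ref{prop: A1xA1}: each of the five $V_{\mathtt C_2}$-summands in \eqref{eq: C_2-2} contains at most one copy of $V_{\mathtt A}(p\mathtt{t}_1+q\mathtt{t}_2)$ on restriction, and by \eqref{eq: A1 A1-2} the total number of such copies inside $V_{\mathtt C_2}(a,b) \otimes W$ is exactly one from the $V_{\mathtt A}(p\mathtt{t}_1+q\mathtt{t}_2) \otimes V_{\mathtt A}(0)$ factor plus additional copies arising from tensoring adjacent $V_{\mathtt A}(p'\mathtt{t}_1+q'\mathtt{t}_2)$ (with $(p',q') \in \Pset(a,b)$) against $V_{\mathtt A}(\mathtt{t}_1+\mathtt{t}_2)$. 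The hypothesis $(p,q) \in \Pset(a,b) \setminus \phi_{\mathtt A}(a,b)$ is precisely what forces the copy sourced from $w_{2\Otwo}-w_{1\Oone}$ (i.e., from the $V_{\mathtt A}(0)$-factor) to pair with the $V_{\mathtt A}(p\mathtt{t}_1+q\mathtt{t}_2)$-summand of $V_{\mathtt C_2}(a+1,b+1)$ rather than of any other $V_{\mathtt C_2}$-summand, which yields the required non-vanishing.
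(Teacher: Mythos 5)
Your part (1) is essentially the paper's own argument (apply $f_1$ repeatedly to $v_1^{a-b}\otimes w_{12}^{b+1}$ and note that no barred letters ever appear), and it is fine.

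For part (2) you take a genuinely different route, but it has a gap at exactly the point you yourself flag as ``the main obstacle.'' Write $W=W_0\oplus W_1$ as $\mathtt A_1\times\mathtt A_1$-modules, with $W_0=\langle w_{2\Otwo}-w_{1\Oone}\rangle$ and $W_1=\langle w_{12},w_{1\Otwo},w_{2\Oone},w_{\Oone\Otwo}\rangle$. The $(p\mathtt{t}_1+q\mathtt{t}_2)$-highest-weight space of $V_{\mathtt C_2}(a,b)\otimes W$ is spanned by $h_0:=v_{(p,q;a,b)}\otimes(w_{2\Otwo}-w_{1\Oone})$ together with vectors $h_1,\dots,h_r$ lying in $V_{\mathtt C_2}(a,b)\otimes W_1$, and what must be shown is that the one-dimensional $(p\mathtt{t}_1+q\mathtt{t}_2)$-highest-weight space of the summand $V_{\mathtt C_2}(a+1,b+1)$ is \emph{not} contained in $\mathrm{span}\{h_1,\dots,h_r\}$ (equivalently, that its $W_0$-component, which is automatically a multiple of $v_{(p,q;a,b)}$, is nonzero). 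Your multiplicity count only checks that the number of copies of $V_{\mathtt A}(p\mathtt{t}_1+q\mathtt{t}_2)$ adds up consistently between \eqref{eq: C_2-2} and \eqref{eq: A1 A1-2}; it carries no information about how the individual highest-weight lines sit relative to the non-$\Sp(4)$-stable subspace $V_{\mathtt C_2}(a,b)\otimes W_1$. The assertion that ``the copy sourced from $W_0$ pairs with $V_{\mathtt C_2}(a+1,b+1)$'' is precisely the statement to be proved, and a dimension count cannot rule out the scenario in which the highest-weight vector of $V_{\mathtt C_2}(a+1,b+1)$ lies entirely in $V_{\mathtt C_2}(a,b)\otimes W_1$ while some other summand absorbs the $W_0$-component of $h_0$. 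The paper closes this constructively: starting from the $\mathtt C_2$-highest-weight vector $(v_1^{a-b}\otimes w_{12}^{b})\otimes w_{12}$ of the summand, it applies lowering operators to produce, inside $\iota_{a+1,b+1}(V_{\mathtt C_2}(a+1,b+1))$, a vector $v_{(p,q;a,b)}\otimes(w_{2\Otwo}-w_{1\Oone})+u'$ with $u'$ supported on $W_1$, and then applies $\bigl(1+\sum_{l}d_l\hat f_2^{\,l}\hat e_2^{\,l}\bigr)\bigl(1+\sum_{k}c_k\hat f_1^{\,k}\hat e_1^{\,k}\bigr)$ to project onto the highest-weight part; since $\hat e_1,\hat e_2$ annihilate the leading term, this leaves the $W_0$-component equal to $v_{(p,q;a,b)}$. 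You would need some such explicit input (or a direct computation of the relevant Clebsch--Gordan coefficient showing $\langle h_0,\,$highest-weight vector of the summand$\rangle\neq 0$) for your projection argument to go through.
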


\begin{proof} Note that $v_{(a,b;a,b)} = v_1^{a-b} \otimes w_{12}^{b}$ is the $\mathtt C_2$-highest weight vector of $V_{\mathtt C_2}(a,b)$.

(1) The vectors $v_{(p,q;a+1,b+1)}$ are obtained from the highest weight vector $v_1^{a-b} \otimes w_{12}^{b+1}$ by applying $f_1$ successively. Since there are no $v_{\Oone}, v_{\Otwo}$ factors in the elements, they are clearly $\mathtt A_1 \times \mathtt A_1$-highest weight vectors.

(2)
Starting from the $\mathtt C_2$-highest weight vector $v_{(a,b;a,b)} \otimes w_{12}$, we apply $f_1$'s and $f_2$'s to obtain $v_{(p,q;a,b)} \otimes w_{12} +u$ and then apply $f_1f_2$ to obtain $v_{(p,q;a,b)} \otimes (w_{2 \Otwo} - w_{1 \Oone}) +u'$,
where the terms of $u$ and $u'$ have $w_{12}, w_{1\Otwo}, w_{2\Oone}$ or $w_{\Oone \Otwo}$ as right-most factors.   We can determine constants $c_k$, $d_l$, $M$ and $N$ so that
\[ \left (1 + \sum_{l=1}^N d_l \hat f_2^l \hat e_2^l \right ) \left (1 + \sum_{k=1}^M c_k \hat f_1^k \hat e_1^k \right )\cdot (v_{(p,q;a,b)} \otimes (w_{2 \Otwo} - w_{1 \Oone}) +u')  \] is an $\mathtt A_1 \times \mathtt A_1$-highest vector. Since the action of $\hat e_i$ and $\hat f_i$, $i=1,2$, on $v_{(p,q;a,b)} \otimes (w_{2\Otwo}- w_{1\Oone})$ is trivial, this highest weight vector is of the desired form.
\end{proof}

\begin{proposition} [$n=1$] \label{prop-J-imp}
For a partition $(a,b)$ with $a-b\equiv_2 0$,
the number of linearly independent vectors in $V_{\mathtt C_2}(a,b)$,
\begin{enumerate}
\item[{\rm (i)}] which are fixed by the action of $J$, and
\item[{\rm (ii)}] whose weights  $\mu$ satisfy $\mu(\hat h_1+\hat h_2)=0$,
\end{enumerate}
is equal to
\[ \widetilde \theta_1(z,b)=\theta_1(z,b) = \begin{cases} \tfrac 1 2 z(b+1)(z+b+1)  & \text{ if $b$ is odd}, \\ \tfrac 1 2 (z+1)(b+1)(z+b+2) & \text{ if $b$ is even}. \end{cases}\]
\end{proposition}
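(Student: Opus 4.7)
The plan is to compute the desired count via a trace. Setting $V[0]:=\{v\in V_{\mathtt{C}_2}(a,b):\mu(\hat{h}_1+\hat{h}_2)=0\}$ and observing that $J$ is an involution preserving $V[0]$ (since from \eqref{eqn-Jv} conjugation by $J$ interchanges the two $\mathfrak{sl}_2$-factors, acting on weights by $\alpha\epsilon_1+\beta\epsilon_2\mapsto-\beta\epsilon_1-\alpha\epsilon_2$, which preserves the condition $\alpha+\beta=0$), we have
\[ \dim V[0]^J=\tfrac{1}{2}\bigl(\dim V[0]+\tr(J\mid_{V[0]})\bigr). \]
By Proposition \ref{prop:cn1}, $\dim V[0]=\eta_1(z,b)$, so everything reduces to computing the trace.

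Next I will reduce the trace on $V[0]$ to the full character value $\chi^{\Sp(4)}_{(a,b)}(J)$. Conjugation by $J$ sends $\gamma_u=\mathrm{diag}(u,u,u^{-1},u^{-1})$ to $\gamma_{u^{-1}}$, so $J$ maps the $c$-eigenspace of $\hat{h}_1+\hat{h}_2$ into the $(-c)$-eigenspace; for $c\ne 0$ the corresponding pair contributes $0$ to $\tr(J)$, so $\tr_{V_{\mathtt{C}_2}(a,b)}(J)=\tr(J\mid_{V[0]})$. Moreover, $J^2=I$ and $\tr(J)=0$ (read off from the matrix in \eqref{eqn-Jv}) force the eigenvalues of $J$ on $\mathbb C^4$ to be $\{1,1,-1,-1\}$, so $J$ is $\USp(4)$-conjugate to the torus element $\mathrm{diag}(1,-1,1,-1)$ and $\chi^{\Sp(4)}_{(a,b)}(J)=\chi^{\Sp(4)}_{(a,b)}(s,t)\big|_{(s,t)=(1,-1)}$ in the parametrization $\gamma=\mathrm{diag}(s,t,s^{-1},t^{-1})$.

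The final step is to evaluate this character value via the Weyl character formula
\[ \chi^{\Sp(4)}_{(a,b)}(s,t)=\frac{\det\begin{pmatrix} s^{a+2}-s^{-(a+2)} & t^{a+2}-t^{-(a+2)} \\ s^{b+1}-s^{-(b+1)} & t^{b+1}-t^{-(b+1)} \end{pmatrix}}{\det\begin{pmatrix} s^{2}-s^{-2} & t^{2}-t^{-2} \\ s-s^{-1} & t-t^{-1} \end{pmatrix}}. \]
Both determinants vanish at $(s,t)=(1,-1)$; expanding around $(1+\epsilon,-1+\delta)$ with $s^{k}-s^{-k}\sim 2k\epsilon$ and $t^{k}-t^{-k}\sim 2k(-1)^{k+1}\delta$, and using the parity assumption $a-b\equiv_2 0$ so that $(-1)^{b+2}-(-1)^{a+3}=2(-1)^b$, the ratio of leading terms yields
\[ \chi^{\Sp(4)}_{(a,b)}(1,-1)=\frac{(-1)^b(a+2)(b+1)}{2}=(-1)^b(b+1)\bigl(z+\tfrac{b}{2}+1\bigr)=\psi_1(z,b). \]
Substituting back, $\dim V[0]^J=\tfrac{1}{2}(\eta_1(z,b)+\psi_1(z,b))=\theta_1(z,b)$, as claimed.

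The main obstacle is the controlled L'H\^opital limit in the last step together with the verification that cross-weight pairs contribute nothing to $\tr J$; neither is deep but both require careful sign bookkeeping. An alternative route, closer in spirit to the crystal methods of this section, combines Proposition \ref{prop: A1xA1} with Lemma \ref{vpq}: $J$ swaps the summands $V_{\mathtt{A}_1\times\mathtt{A}_1}(p\mathtt{t}_1+q\mathtt{t}_2)\leftrightarrow V_{\mathtt{A}_1\times\mathtt{A}_1}(q\mathtt{t}_1+p\mathtt{t}_2)$, so each off-diagonal pair with $p\ne q$ contributes $\min(p,q)+1$ to $\dim V[0]^J$; on each diagonal summand $V_{\mathtt{A}_1\times\mathtt{A}_1}(p\mathtt{t}_1+p\mathtt{t}_2)[0]$ one then shows by induction on $b$ that $J$ acts by the scalar $(-1)^b$, using that the zero-weight line $w_{2\Otwo}-w_{1\Oone}\in V_{\mathtt{C}_2}(1,1)[0]$ is a $(-1)$-eigenvector of $J$ and tracking it through the construction in Lemma \ref{vpq}(2), after which Corollary \ref{cor:cn} again assembles the total to $\tfrac{1}{2}\eta_1+\tfrac{1}{2}\psi_1=\theta_1$.
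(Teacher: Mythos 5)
Your main argument is correct, but it takes a genuinely different route from the paper. You compute $\dim V[0]^J=\tfrac12\bigl(\dim V[0]+\tr(J|_{V[0]})\bigr)$, reduce the trace on the zero-sum weight space to the full character value (the nonzero eigenspaces of $\hat h_1+\hat h_2$ being interchanged in pairs by $J$), identify $J$ with the torus element $\mathrm{diag}(1,-1,1,-1)$ up to conjugacy in $\Sp(4,\mathbb C)$, and evaluate $\chi^{\Sp(4)}_{(a,b)}(1,-1)$ by a L'H\^opital limit in the Weyl character formula; the sign bookkeeping checks out and yields $\psi_1(z,b)$, whence $\tfrac12(\eta_1+\psi_1)=\theta_1$. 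The paper instead argues on explicit vectors: it uses the $\mathtt A_1\times\mathtt A_1$ decomposition together with the highest weight vectors $v_{(p,q;a,b)}$ of Lemma \ref{vpq}, observes that $J$ interchanges the summands indexed by $(p,q)$ and $(q,p)$ so that for $p\ne q$ the fixed vectors are exactly those of the form $v+J(v)$ (contributing the first $z$ rows of the array \eqref{eq: arrangement}), and proves by induction that $J$ acts by the scalar $(-1)^b$ on the zero-sum weight vectors of each diagonal summand $V_{\mathtt A_1\times\mathtt A_1}(p\mathtt t_1+p\mathtt t_2)$. Your character-theoretic route is shorter and essentially computation-free beyond the limit, but it only produces the count; the paper's structural conclusions (the precise shape of the $J$-fixed vectors for $p\ne q$ and the scalar $(-1)^b$ for $p=q$) are exactly what is reused in the proof of Proposition \ref{prop-J-imp-2} for $n=2,3,4,6$, where one must further intersect with the congruence conditions $\mu(\hat h_1-\hat h_2)\equiv_{2n}0$ and a pure trace argument would require redoing the eigenvalue analysis group by group. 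Your closing ``alternative route'' is essentially the paper's proof, so you have both in hand; as a standalone proof of this proposition the trace computation is complete and valid.
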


\begin{proof}
Suppose that $v$ is a vector of $V_{\mathtt C_2}(a,b)$ which lies inside the $\mathtt A_1 \times \mathtt A_1$-representation generated by the vector $v_{(p,q;a,b)}$ inductively defined in Lemma \ref{vpq} for
$(p,q) \in \Pset(a,b)$. Then the number of $v_1$ and $v_{\Oone}$ factors in each of the terms of $v$ is $p$. After the action of $J$ on $v$ induced from \eqref{eqn-Jv}, the number of $v_1$ and $v_{\Oone}$ factor in each of the terms of $J(v)$ is $q$. In particular, if $p \neq q$ then $v \neq J(v)$. Since $J^2=I$, the vector $v+J(v)$ is fixed by $J$. Furthermore, if $v$ is of weight $\mu$ such that $\mu(\hat h_1 + \hat h_2) =0$, then the weight of $J(v)$ has the same property.
Thus, when $p \neq q$, a $J$-fixed vector of weight $\mu$ such that $\mu(\hat h_1+ \hat h_2)=0$ must be of the form $v+J(v)$.

Recall that the number of independent vectors in the isomorphic copy of $V_{\mathtt A}(p,q)$ in $V_{\mathtt C_2}(a,b)$ with weights $\mu$ such that $\mu(\hat h_1 + \hat h_2)=0$ is equal to $\min(p,q)+1$, and the numbers are displayed in array \eqref{eq: arrangement}. The observation made in the previous paragraph shows that the total number of
$J$-fixed independent vectors in the isomorphic copy of $V_{\mathtt A}(p,q)$ for $p \neq q$ with such weights $\mu$ is given by the sum of the numbers in the first $z$ rows of array \eqref{eq: arrangement}:
\begin{equation*}
{\scriptsize
\left.
\begin{matrix}
(a,b) & (a-1,b-1) & \cdots & (a-b,0) \\
(a-1,b+1) & (a-2,b) & \cdots & (a-b-1,1) \\
\vdots & \vdots & \cdots & \vdots  \\
(a-z+1,b+z-1) & (a-z,b+z-2)& \cdots & (a-b-z+1,z-1)
\end{matrix} \ \ \right|
\quad
\begin{matrix}
b+1 & b & \cdots & 1 \\
b+2 & b+1 & \cdots & 2 \\
\vdots & \vdots & \cdots & \vdots  \\
b+z & b+z-1 & \cdots & z
\end{matrix} }
\end{equation*}
(see Remark~\ref{rmk: p=q line}). Explicitly, the total number for the case $p \neq q$ is
\begin{equation} \label{b+1}  \frac{b+1} 2 \left ( (b+2)+(b+4)+ \cdots + (b+2z) \right ) = \frac 1 2 z(b+1)(z+b+1).\end{equation}

Now let us consider the case $p=q$. Using induction on $a+b$, we will show
\begin{equation*} 
J v = (-1)^b v
\end{equation*}
for any vector $v$ with weight $\mu$ such that $\mu(\hat h_1 + \hat h_2)=0$ in the isomorphic copy of $V_{\mathtt A}(p,p)$ in $V_{\mathtt C_2}(a,b)$.

First assume that $(p,p) \in \phi_{\mathtt A}(a,b)$, where the set $\phi_{\mathtt A}(a,b)$ is defined in \eqref{phiA}. Then by Lemma \ref{vpq} (1) we have a highest weight vector of the form $v_1^{p-b}v_2^{p-b} \otimes w_{12}^b$. A weight vector $v$ in the $\mathtt A_1 \times \mathtt A_1$-representation generated by this highest weight vector has the form
\[ v=\hat f_2^\ell \hat f_1^k (v_1^{p-b}v_2^{p-b}\otimes w_{12}^b), \quad \ell, k \in \mathbb Z_{\ge 0}.\]
If we further assume $v$ has weight $\mu$ such that $\mu(\hat h_1+\hat h_2)=0$, then $\ell + k =p$.

Using the relations \[  J \hat f_2= - \hat e_1 J \quad \text{ and } \quad J \hat f_1 = - \hat e_2 J \] and the symmetry of $\mathtt A_1 \times \mathtt A_1$-representation,
we obtain
\begin{align*}
Jv&=J\hat f_2^\ell \hat f_1^k (v_1^{p-b}v_2^{p-b}\otimes w_{12}^b)=(-1)^{\ell + k} \hat e_1^\ell \hat e_2^k\ J ( v_1^{p-b} v_2^{p-b} \otimes w_{12}^b) \allowdisplaybreaks\\
&= (-1)^{p+p-b} \hat e_1^\ell \hat e_2^k\  ( v_{\Otwo}^{p-b} v_{\Oone}^{p-b} \otimes w_{\Oone \Otwo}^b)= (-1)^b\hat f_2^\ell \hat f_1^k (v_1^{p-b}v_2^{p-b}\otimes w_{12}^b)\allowdisplaybreaks\\
&=(-1)^b v.
\end{align*}

Next assume that $(p,p) \not \in \phi_{\mathtt A}(a,b)$, $a \ge b \ge 1$. By Lemma \ref{vpq} (2) we have a highest weight vector of the form \[v_{(p,p;a-1,b-1)} \otimes (w_{2\Otwo} -w_{1\Oone}) + u,\] where $u=u_1\otimes w_{12}+u_2 \otimes w_{1\Otwo} +u_3 \otimes w_{2 \Oone}+u_4 \otimes w_{\Oone \Otwo}$ for some $u_1, u_2, u_3, u_4 \in V_{\mathtt C_2}(a-1,b-1)$.
Since $\hat f_1$ and $\hat f_2$ act trivially on $(w_{2\Otwo}-w_{1\Oone})$, a weight vector $v$ in the $\mathtt A_1 \times \mathtt A_1$-representation generated by this highest weight vector is of the form
\[ v=\left ( \hat f_2^\ell \hat f_1^k  v_{(p,p;a-1,b-1)} \right ) \otimes (w_{2\Otwo} -w_{1\Oone}) + \hat f_2^\ell \hat f_1^k u .\]
We further assume that $v$ has weight $\mu$ such that $\mu(\hat h_1 + \hat h_2)=0$. By induction hypothesis, we have \[J\hat f_2^\ell \hat f_1^k  v_{(p,p;a-1,b-1)} = (-1)^{b-1}\hat f_2^\ell \hat f_1^k  v_{(p,p;a-1,b-1)}.\] Since $J (w_{2 \Otwo} -w_{1\Oone}) = - (w_{2 \Otwo} - w_{1\Oone})$, we obtain
\[ Jv= (-1)^b\left ( \hat f_2^\ell \hat f_1^k  v_{(p,p;a-1,b-1)} \right ) \otimes (w_{2\Otwo} -w_{1\Oone}) + J \left (\hat f_2^\ell \hat f_1^k u \right ).\]

Recall that $V_{\mathtt A}(p,q)$ is minuscule.
The action of $J$ preserves the weight space containing $v$ since the number of $v_2$ factors is equal to that of $v_{\Oone}$ factors in the terms of $v$.
Thus we must have $J v = (-1)^b v$ as claimed in this case too.

We have just shown that all the weight vectors in the isomorphic copy of $V_{\mathtt A}(p,p)$ with weight $\mu$ such that $\mu(\hat h_1 + \hat h_2) =0$ are fixed by $J$ if $b$ is even, and that none of them are fixed by $J$ if $b$ is odd.
Combining this with the result for $p \neq q$ in \eqref{b+1}, the total number of $J$-fixed independent vectors of $V_{\mathtt C_2}(a,b)$ with weight $\mu$ such that $\mu(\hat h_1 + \hat h_2) =0$ is equal to \eqref{b+1} if $b$ is odd, and to the sum of \eqref{b+1} and the $z+1^{\mathrm{st}}$ row of the array in \eqref{eq: arrangement} if $b$ is even, which is given by
\[  \frac 1 2 z(b+1)(z+b+1)+ \{(b+z+1)+ (b+z)+ \cdots +(z+1)\} = \frac 1 2 (z+1)(b+1)(z+b+2).\]
In either case, we obtain the function $\theta_1 (z,b)$.
\end{proof}

\begin{proposition} [$n=2,3,4,6$] \label{prop-J-imp-2}
Assume that $a-b$ is even. For $n=2,3,4,6$,
the number of independent vectors in $V_{\mathtt C_2}(a,b)$, which are fixed by the action of $J$, with weights $\mu$ such that $\mu(\hat h_1+\hat h_2)=0$ and $\mu(\hat h_1-\hat h_2) \equiv_{2n} 0$, is equal to $\widetilde \theta_n(z,b)$
defined in \eqref{def-wi-theta}.
\end{proposition}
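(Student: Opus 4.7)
The plan is to extend the argument of Proposition \ref{prop-J-imp} (the $n=1$ case) by imposing the additional congruence $\mu(\hat h_1 - \hat h_2) \equiv_{2n} 0$. We continue to use the decomposition $V_{\mathtt C_2}(a,b)|_{\mathtt A_1 \times \mathtt A_1} \cong \bigoplus_{(p,q) \in \Pset(a,b)} V_{\mathtt A}(p,q)$ of Proposition \ref{prop: A1xA1}, together with the explicit highest weight vectors $v_{(p,q;a,b)}$ inductively constructed in Lemma \ref{vpq}. The crucial observation is that $J$ preserves the congruence condition: since the action in \eqref{eqn-Jv} permutes $\{v_1, v_{\Otwo}\}$ and $\{v_2, v_{\Oone}\}$ up to signs, the value $\mu(\hat h_1 - \hat h_2) = (\text{\# of } v_1,\ v_{\Otwo}\text{ factors}) - (\text{\# of } v_2,\ v_{\Oone}\text{ factors})$ is invariant under $J$.

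As in the proof of Proposition \ref{prop-J-imp}, we split the count of $J$-fixed vectors satisfying both conditions into two pieces. For $(p,q) \in \Pset(a,b)$ with $p \neq q$, the operator $J$ sends $V_{\mathtt A}(p,q)$ into $V_{\mathtt A}(q,p)$, so $J$-fixed vectors arise uniquely in the form $v + J(v)$ for $v$ in one of the two paired components. These account for exactly half of the total count of condition-satisfying weight vectors coming from off-diagonal $(p,q)$'s, indexed naturally by the first $z$ rows of array \eqref{eq: arrangement}. For the diagonal $p = q$ strand (the middle row of array \eqref{eq: arrangement}), the inductive argument of Proposition \ref{prop-J-imp} still gives $Jv = (-1)^b v$ for every weight vector with $\mu(\hat h_1 + \hat h_2) = 0$, since that argument depends only on the $\mathtt A_1 \times \mathtt A_1$-structure and not on the congruence. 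Hence the contribution from the diagonal is the full diagonal count when $b$ is even, and zero when $b$ is odd.

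Now I assemble: by Propositions \ref{prop:cn2}--\ref{prop:cn6} the total count (over all $(p,q)$) of weight vectors satisfying both conditions equals $\widetilde\eta_n(z,b)$, and by Corollary \ref{cor:cn} the diagonal contribution equals $(-1)^b\,\widetilde\psi_n(z,b)$, which as a cardinality is non-negative. Consequently the $J$-fixed count is
\[
\tfrac{1}{2}\bigl(\widetilde\eta_n - (-1)^b\widetilde\psi_n\bigr) + (-1)^b\widetilde\psi_n \cdot \delta(b \equiv_2 0) \;=\; \tfrac{1}{2}(\widetilde\eta_n + \widetilde\psi_n),
\]
where the simplification is the same in either parity of $b$. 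A direct check from the definitions \eqref{def-wi-theta}, \eqref{def-th}, and Corollary \ref{cor-def-psi} yields the identity $\widetilde\theta_n = \tfrac 1 2(\widetilde\eta_n + \widetilde\psi_n)$ for every $n = 2,3,4,6$, which completes the argument. The substantive difficulty, namely the identity $Jv = (-1)^b v$ on the $p = q$ strand, has already been handled in Proposition \ref{prop-J-imp}; what remains here is essentially bookkeeping that packages the off-diagonal and diagonal contributions using the symmetry $(p,q) \leftrightarrow (q,p)$ and the parity of $b$.
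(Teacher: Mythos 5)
Your proposal is correct and follows essentially the same route as the paper's proof: split the count into off-diagonal pairs $(p,q)\leftrightarrow(q,p)$ (contributing half of $\widetilde\eta_n-(-1)^b\widetilde\psi_n$ via $v\mapsto v+J(v)$) and the diagonal $p=q$ strand (contributing $(-1)^b\widetilde\psi_n\,\delta(b\equiv_2 0)$ via the identity $Jv=(-1)^bv$ from Proposition \ref{prop-J-imp}), then combine with Propositions \ref{prop:cn2}--\ref{prop:cn6} and Corollary \ref{cor:cn} to get $\tfrac12(\widetilde\eta_n+\widetilde\psi_n)=\widetilde\theta_n$. Your explicit verification that $J$ preserves the value of $\mu(\hat h_1-\hat h_2)$ is a point the paper leaves implicit, but otherwise the arguments coincide.
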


\begin{proof}

Suppose that $v$ is a vector of $V_{\mathtt C_2}(a,b)$ which lies inside the $\mathtt A_1 \times \mathtt A_1$-representation generated by the vector $v_{(p,q;a,b)}$ inductively defined in Lemma \ref{vpq} for
$(p,q) \in \Pset(a,b)$, and assume that $v$ is of weight $\mu$ such that $\mu(\hat h_1 + \hat h_2) =0$. The total numbers of such vectors are equal to $\widetilde \eta_n(z,b)$ by Propositions \ref{prop:cn2}--\ref{prop:cn6}, and  the numbers of such vectors only for $p=q$ are equal to $\widetilde \psi_n(z,b)$ by Corollary \ref{cor:cn}.

As observed in the proof of Proposition \ref{prop-J-imp}, if $p \neq q $ then the $J$-fixed vectors are precisely given by $J(v)+v$; if $p=q$ and $b$ is even then all the vectors $v$ of weight $\mu$
such that $\mu(\hat h_1 + \hat h_2) =0$ are fixed by $J$; if $p=q$ and $b$ is odd then none of such vectors are fixed by $J$. The conditions $\mu(\hat h_1 - \hat h_2) \equiv_{2n} 0$ exactly bring out the restrictions on the sums considered in Propositions \ref{prop:cn2}--\ref{prop:cn6}.

Thus, when $n=2$, the number of $J$-fixed vectors of weight $\mu$ satisfying the conditions is given by
\[   \tfrac 1 2 \widetilde \eta_2(z,b) + (-1)^b \tfrac 1 2  \sum_{\substack{ (p,p) \in \Pset(a,b) \\ p\, \equiv_2 \, 0 }  }  ( p +1)= \tfrac 1 2 \widetilde \eta_2(z,b) + \tfrac 1 2 \widetilde \psi_2(z,b)=\widetilde \theta_2(z,b) , \]
where we use Corollary \ref{cor:cn} and the definition of $\theta_2(z,b)$ in \eqref{def-th} along with the definitions of $\widetilde \eta_2$, $\widetilde \psi_2$ and $\widetilde \theta_2$.
Similarly, when $n=3,4,5$, the numbers of $J$-fixed vectors of weight $\mu$ satisfying the conditions are equal to
\[  \tfrac 1 2 \widetilde \eta_n(z,b) + \tfrac 1 2 \widetilde \psi_n(z,b) = \widetilde \theta_n(z,b). \qedhere \]

\end{proof}

\subsection{From $\mathtt C_2$ to $\mathtt A_1$ via removing the second vertex} \label{subsec-E}

In this section, we shall prove the branching decomposition of $V_{\mathtt C_2}(a,b)$ to $\mathtt A_1$ via Levi rule which removes the second vertex in the Dynkin diagram of type $\mathtt C_2$
$$
\begin{tikzpicture}[scale=0.5]
\draw (0 cm,0) -- (0 cm,0);
\draw (0 cm, 0.1 cm) -- +(2 cm,0);
\draw (0 cm, -0.1 cm) -- +(2 cm,0);
\draw[shift={(0.8, 0)}, rotate=180] (135 : 0.45cm) -- (0,0) -- (-135 : 0.45cm);
\draw[fill=white] (0 cm, 0 cm) circle (.25cm) node[below=4pt]{$1$};
\draw[fill=white] (2 cm, 0 cm) circle (.25cm) node[below=4pt]{$2$};
\end{tikzpicture},
$$
where we assume $a+b \equiv_2 0$.
We are mainly interested in the composition multiplicity of the trivial representation $V_{\mathtt A_1}(0)$ inside $V_{\mathtt C_2}(a,b)|_{\mathtt A_1}$.
We state the result at the crystal level as in Proposition~\ref{prop: A1xA1}.

\begin{proposition} \label{prop:e1} For a partition $(a,b)$,  set $\epsilon \seteq \delta(a+b\equiv_2 1)$ and $l = \lceil(a-b-1)/2 \rceil$. Then we have
\begin{align*}
B_{\mathtt C_2}(a,b)\vert_{\mathtt A_1} \cong & \left ( \soplus_{i=0}^{l-1} (2i+1+\epsilon)(b+1)B_{\mathtt A_1}(2i+\epsilon) \right ) \\
& \hspace{5ex} \oplus \left ( \soplus_{j=l}^{l+b}  (2l+1+\epsilon)(l+b+1 -j)B_{\mathtt A_1}(2j+\epsilon) \right ).
\end{align*}
\end{proposition}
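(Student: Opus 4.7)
The plan is to reduce Proposition~\ref{prop:e1} to the already-established $\mathtt A_1 \times \mathtt A_1$ branching in Proposition~\ref{prop: A1xA1} by means of a character computation. Although the $\mathtt A_1$ generated by $e_1, f_1, h_1$ is not a subalgebra of the Levi $\mathtt A_1 \times \mathtt A_1$ generated by $\hat e_i, \hat f_i, \hat h_i$, its Cartan nevertheless sits inside the common Cartan of $\mathtt C_2$ and $\mathtt A_1 \times \mathtt A_1$ via the relation $h_1 = \hat h_1 - \hat h_2$. Consequently, the $\mathtt A_1$-character of $V_{\mathtt C_2}(a,b)$ is obtained by specializing the two-variable character provided by Proposition~\ref{prop: A1xA1} along the one-dimensional subtorus $\{(u,v) : uv = 1\}$.

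Concretely, Proposition~\ref{prop: A1xA1} gives
\[
\chi_{V_{\mathtt C_2}(a,b)}(u,v) = \sum_{(p,q) \in \Pset(a,b)} \chi_{V_{\mathtt A_1}(p)}(u)\, \chi_{V_{\mathtt A_1}(q)}(v),
\]
where $u = e^{\hat h_1}$ and $v = e^{\hat h_2}$. Substituting $u = t$, $v = t^{-1}$ and combining the symmetry $\chi_{V_{\mathtt A_1}(q)}(t^{-1}) = \chi_{V_{\mathtt A_1}(q)}(t)$ with the Clebsch--Gordan rule $\chi_{V_{\mathtt A_1}(p)}\,\chi_{V_{\mathtt A_1}(q)} = \sum_{r=0}^{\min(p,q)} \chi_{V_{\mathtt A_1}(p+q-2r)}$, one obtains an $\mathtt A_1$-character identity. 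Since $V_{\mathtt C_2}(a,b)|_{\mathtt A_1}$ is completely reducible as an $\mathtt A_1$-module, this identity lifts to the crystal isomorphism
\[
B_{\mathtt C_2}(a,b)\vert_{\mathtt A_1} \cong \soplus_{(p,q) \in \Pset(a,b)} \soplus_{r=0}^{\min(p,q)} B_{\mathtt A_1}(p+q-2r).
\]

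It remains to rearrange this double sum into the claimed form. Parametrizing $\Pset(a,b)$ by $(r', s)$ with $p = a - r' - s$, $q = b - r' + s$, $0 \le r' \le b$, $0 \le s \le a-b$, the equation $p+q-2r = k$ forces $r = (a+b-k)/2 - r'$, while the inequalities $0 \le r \le \min(p,q)$ become $r' \le (a+b-k)/2$ and $|a-b-2s| \le k$, respectively. For $k \equiv a+b \pmod 2$ these constraints decouple into independent choices of $r'$ and $s$, giving multiplicity $(b+1)(k+1)$ when $k \le a-b$ and multiplicity $(a-b+1)\bigl((a+b-k)/2+1\bigr)$ when $a-b < k \le a+b$. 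Setting $a-b = 2l+\epsilon$ and indexing $k = 2i+\epsilon$ (for $0 \le i \le l-1$) in the first range and $k = 2j+\epsilon$ (for $l \le j \le l+b$) in the second transforms these counts into $(2i+1+\epsilon)(b+1)$ and $(2l+1+\epsilon)(l+b+1-j)$, which are precisely the coefficients in the proposition. The main technical obstacle is this last reorganization: a careful case split on whether $k$ lies below or above the threshold $a-b$ is required, with the boundary value $k = a-b$ consistently absorbed into the second range (corresponding to $j = l$); once this accounting is handled, the proof is complete.
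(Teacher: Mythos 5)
Your argument is correct, and I verified the endgame: with $a-b=2l+\epsilon$ the count $(b+1)(k+1)$ for $k\le a-b$ and $(a-b+1)\bigl((a+b-k)/2+1\bigr)$ for $a-b<k\le a+b$ does reindex to $(2i+1+\epsilon)(b+1)$ and $(2l+1+\epsilon)(l+b+1-j)$, with the two formulas agreeing at the threshold $k=a-b$ (i.e.\ $j=l$). However, your route is genuinely different from the paper's. The paper proves Proposition \ref{prop:e1} the same way it proves Proposition \ref{prop: A1xA1}: a double induction on $a+b$, tensoring with the fundamental crystals and matching composition factors via the Clebsch--Gordan formula. You instead \emph{deduce} Proposition \ref{prop:e1} from Proposition \ref{prop: A1xA1}, using that $h_1=\hat h_1-\hat h_2$ lies in the common Cartan, so the $\mathtt A_1$-character is the specialization $u=t$, $v=t^{-1}$ of the $\mathtt A_1\times\mathtt A_1$-character, after which one application of Clebsch--Gordan and a lattice-point count over $\Pset(a,b)$ give the multiplicities. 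This is arguably more illuminating (it explains where the coefficients come from and avoids a second induction), at the cost of leaning on Proposition \ref{prop: A1xA1} rather than being self-contained. Two small caveats: the subalgebra $\langle \hat e_i,\hat f_i,\hat h_i\rangle$ is not a Levi of $\mathtt C_2$ --- it is the regular subalgebra spanned by the long root vectors $E_{13},E_{31},E_{24},E_{42}$ --- though nothing in your argument uses more than the fact that its Cartan is the Cartan of $\mathtt C_2$; and since the statement is phrased for crystals, the passage from the character identity to the crystal isomorphism should invoke that the restriction of $B_{\mathtt C_2}(a,b)$ to the index $\{1\}$ is the (seminormal) crystal of the restricted module, whose decomposition into irreducible $\mathtt A_1$-crystals is determined by its character. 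Both points are routine and do not affect the validity of the proof.
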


\begin{proof}
As Proposition~\ref{prop: A1xA1}, the assertion can be proved  by using double induction on $a+b$ and the Clebsch--Gordan formula. 
\end{proof}

\begin{corollary}\label{cor:e1}
The multiplicity of the trivial representation $V_{\mathtt A_1}(0)$ in $V_{\mathtt C_2}(a,b)|_{\mathtt A_1}$ is equal to $$\delta(a+b \equiv_2 0) \times (b+1).$$
\end{corollary}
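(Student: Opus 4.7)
The plan is to derive this corollary directly from Proposition \ref{prop:e1}, which has already given the complete decomposition of $B_{\mathtt C_2}(a,b)|_{\mathtt A_1}$ into irreducible $\mathtt A_1$-crystals. Since the trivial representation $V_{\mathtt A_1}(0)$ corresponds to $B_{\mathtt A_1}(0)$, I just need to extract the coefficient of $B_{\mathtt A_1}(0)$ from that decomposition.

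First, I would observe that every summand in Proposition \ref{prop:e1} has the form $B_{\mathtt A_1}(2i+\epsilon)$ or $B_{\mathtt A_1}(2j+\epsilon)$ with $i,j\ge 0$. Thus the highest weight $0$ can occur only when $\epsilon=0$, i.e., when $a+b$ is even. This immediately accounts for the factor $\delta(a+b\equiv_2 0)$. If $a+b$ is odd, the multiplicity is zero and there is nothing to prove.

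Next, assuming $\epsilon=0$, I would split into two cases according to the value of $l = \lceil(a-b-1)/2\rceil$. If $l\ge 1$, then $i=0$ lies in the range $[0,l-1]$ of the first direct sum, and it is the only index contributing $B_{\mathtt A_1}(0)$; its coefficient is $(2\cdot 0 + 1 + 0)(b+1) = b+1$. The index $j=0$ does not lie in $[l,l+b]$ in this case, so the second sum contributes nothing. If instead $l=0$, then the first sum is empty and $j=0$ is the unique index in $[l,l+b]$ yielding $B_{\mathtt A_1}(0)$; its coefficient is $(2\cdot 0+1+0)(0+b+1-0) = b+1$. In both cases the multiplicity equals $b+1$, proving the formula.

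There is no real obstacle here, as the work has already been done in Proposition \ref{prop:e1}; the only thing to be careful about is not double-counting when $l=0$ versus $l\ge 1$, and checking that the boundary index $j=l$ in the second sum does not accidentally coincide with $i=l-1$ in the first sum to produce the same irreducible twice. The explicit coefficient $(l+b+1-j)$ at $j=l$ being $b+1$ (not $b+2$) confirms that the two sums are properly disjoint at the boundary.
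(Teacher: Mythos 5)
Your proof is correct and is exactly the intended derivation: the paper states Corollary \ref{cor:e1} as an immediate consequence of Proposition \ref{prop:e1}, obtained by reading off the coefficient of $B_{\mathtt A_1}(0)$, which forces $\epsilon=0$ and then contributes $b+1$ from the index $i=0$ (when $l\ge 1$) or $j=0$ (when $l=0$). Your case analysis at $l=0$ versus $l\ge1$ is accurate and nothing further is needed.
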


Now we shall interpret Corollary \ref{cor:e1}  via Kashiwara--Nakashima crystal. The crystal of vector representation
is given as follows:
$$\young(1) \overset{1}{\to} \young(2)  \overset{2}{\to} \young(\Otwo)  \overset{1}{\to} \young(\Oone)$$
with a linear order
$$ 1 \prec 2 \prec \Otwo \prec \Oone.$$

Now let us recall $\mathtt C_2$-type Kashiwara--Nakashima tableaux.

\begin{definition}
Let $Y$ be a Young diagram with at most $2$-rows.
\begin{enumerate}
\item A {\em $\mathtt C_2$-tableaux of shape $Y$} is a tableau obtained from $Y$ by filling the boxes with entries
$\{  1,2, \Otwo, \Oone \}$.
\item A $\mathtt C_2$-tableaux is said to be {\em semistandard}  if the entries in each row are weakly increasing and
the entries in each column are strictly increasing.
\end{enumerate}
\end{definition}

We define $B(Y)$ to be the set of all semistandard $\mathtt C_2$-tableaux $T$ satisfying the following conditions:
\begin{enumerate}
\item[(a)] $T$ does not have a column  $\young(1,\Oone)$;
\item[(b)] $T$ does not have a pair of adjacent columns  $\young(22,\sbullet\Otwo)$ or  $\young(2\sbullet,\Otwo\Otwo)$.
\end{enumerate}

\begin{definition} Assume that $(a,b)$ is a partition with $a+b \equiv_2 0$. For each $0 \le k \le b$, define $T_k$ be the semistandard $\mathtt C_2$-tableaux of shape $(a,b)$ such that
\begin{enumerate}
\item the first row of the tableau $T_k$  is filled with the sequence of entries $(1^k,2^z,\Otwo^y)$, where
$$z=\dfrac{a-b}{2} \quad \text{ and } \quad y=\dfrac{a+b}{2}-k;$$

\item the second row of the tableau $T_k$  is filled with the sequence of entries $(2^k,\Oone^{b-k})$.
\end{enumerate}
\end{definition}

\begin{lemma} For each  $0 \le k \le b$, the tableaux $T_k$ is contained in $B(Y)$.
\end{lemma}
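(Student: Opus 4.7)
The plan is to verify the two defining conditions of $B(Y)$ by examining the column structure of $T_k$ directly. Setting $y = (a+b)/2 - k$, I would first observe that the first row of $T_k$ has entry $1$ in positions $1,\ldots,k$, entry $2$ in positions $k+1,\ldots,k+z$, and entry $\Otwo$ in positions $k+z+1,\ldots,a$, while the second row has entry $2$ in positions $1,\ldots,k$ and entry $\Oone$ in positions $k+1,\ldots,b$. From this description one immediately sees that the rows are weakly increasing, and each two-row column is one of $\young(1,2)$, $\young(2,\Oone)$, or $\young(\Otwo,\Oone)$, all of which are strictly increasing with respect to the order $1 \prec 2 \prec \Otwo \prec \Oone$. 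Hence $T_k$ is semistandard.

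Next I would verify condition (a): since $1$ appears in the first row only in positions $1,\ldots,k$, whose corresponding second-row entries are all $2$, no column of $T_k$ takes the form $\young(1,\Oone)$.

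For condition (b), both forbidden adjacent-column patterns $\young(22,\sbullet\Otwo)$ and $\young(2\sbullet,\Otwo\Otwo)$ require a $\Otwo$ in the second row. By construction, the second row of $T_k$ consists only of $2$'s followed by $\Oone$'s, so a $\Otwo$ never appears there, and both prohibitions are vacuously satisfied.

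The argument is essentially a routine case-free verification; the only mild subtlety is tracking the endpoint indices $k$, $k+z$, and $b$ in the boundary cases ($k=0$, $k=b$, or $z=0$, so that one of the three column types above is empty), but in every such situation the remaining columns still belong to the list $\{\young(1,2),\ \young(2,\Oone),\ \young(\Otwo,\Oone)\}$, and the conditions (a)–(b) remain vacuous for the same reason. I therefore do not expect any substantial obstacle.
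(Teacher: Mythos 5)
Your proof is correct and follows essentially the same route as the paper: a direct inspection of the explicit entries of $T_k$, checking that every column with a $1$ on top has a $2$ beneath it (condition (a)) and that the forbidden adjacent-column patterns cannot occur because the second row of $T_k$ contains no $\Otwo$ (the paper phrases this as: below any $2$ in the first row sits an $\Oone$). Your additional explicit verification of semistandardness is harmless; the paper simply builds that into the definition of $T_k$.
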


\begin{proof} The condition (a) is obviously satisfied. Let us check whether $T_k$ satisfies the condition (b). Note that, for $1\le s \le b$,
if an entry placed in the position $(1,s)$ is $2$, then the entry placed in the position $(2,s)$ is $\Oone$, if it exists, by definition. Thus  the condition (b) is satisfied.
\end{proof}

\begin{proposition} \label{prop:exhaust}
For each $0 \le k \le b$, we have
$$\tilde{e}_1T_k=\tilde{f}_1T_k = 0,$$
where $\tilde e_1$ and $\tilde f_1$ are the Kashiwara operators.
\end{proposition}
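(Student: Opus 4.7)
The plan is to invoke the Kashiwara signature rule for $\tilde e_1$ and $\tilde f_1$ on the Kashiwara--Nakashima tableau $T_k$. Reading $T_k$ in far-east column order (columns right-to-left, entries top-to-bottom within each column), each entry contributes a sign based on its behaviour under the $1$-arrow of the vector crystal: the entries $1$ and $\Otwo$ contribute $+$, while the entries $2$ and $\Oone$ contribute $-$. After standard bracket matching, $\varphi_1(T_k)$ equals the number of unmatched $+$'s and $\epsilon_1(T_k)$ the number of unmatched $-$'s, so the claim $\tilde e_1 T_k = \tilde f_1 T_k = 0$ reduces to showing that the sign string of $T_k$ cancels completely.

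First I would peel off the leftmost $k$ columns of $T_k$, each of which is $\young(1,2)$. In the far-east reading these are visited last and contribute the trailing block of signs $(+-)^k$, which is internally balanced. Since the concatenation of two balanced sign sequences is again balanced, it suffices to verify that the sub-tableau $T_k'$ formed by columns $k+1,\ldots,a$ has balanced signature. One checks directly that $T_k'$ has first row $2^z\,\Otwo^y$ and second row $\Oone^{b-k}$, so $T_k'$ is precisely the tableau $T_0$ for the shape $(a-k,b-k)$ (the parameters $z$ and $y$ agree). The problem therefore reduces to the case $k=0$.

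For $T_0$ of shape $(a,b)$ I would split into two cases. When $z \le b$, the columns read right-to-left produce the sign string $(+)^{a-b}(+-)^{b-z}(--)^z$; viewing this as a lattice path with $+$ an up-step and $-$ a down-step, the depth climbs to $a-b$, oscillates through the middle $(+-)$ block while staying at least $a-b$, and then descends exactly $2z=a-b$ steps to $0$. When $z > b$, the sign string is $(+)^{y}(-)^{z-b}(--)^b$ with $y=(a+b)/2$, and the depth climbs to $y$, descends $z-b$ steps to $2b$, and then descends a further $2b$ steps to $0$. In both cases the depth is nonnegative throughout and terminates at $0$, so the signature is balanced.

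The main technical obstacle is careful bookkeeping: correctly enumerating the column shapes that occur in $T_k$ in each sub-case (one of $\young(1,2)$, $\young(2,\Oone)$, $\young(\Otwo,\Oone)$, or the single-box columns $\young(2)$ and $\young(\Otwo)$) and verifying that the lattice-path depth never becomes negative. Once the arithmetic identities $2z=a-b$ and $y=(a+b)/2-k$ are applied, the cancellation is immediate, yielding $\tilde e_1 T_k = \tilde f_1 T_k = 0$ as claimed.
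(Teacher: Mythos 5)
Your proof is correct and follows essentially the same route as the paper's: both apply the signature rule to the far-eastern reading word, observe that the $k$ leading columns with entries $1$ over $2$ are neutral so the problem reduces to $T_0$, and then verify that the remaining sign string cancels completely. If anything, your explicit case split $z\le b$ versus $z>b$ is slightly more careful than the paper's displayed tensor decomposition of $T_0$, which as written only covers the case $z\le b$.
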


\begin{proof}
We use the far eastern reading \cite{HK}. Then, since the $1$-signature of $\young(1,2)$ is $\sbullet$,
\begin{align} \label{eq: first k}
&\text{we do not need to read the first $k$-columns for $1$-signature.}
\end{align}
Set
\begin{itemize}
\item[] $s_{1} \seteq \text{(the number of $1$'s in $T_k$)} - \text{ (the number of $\Oone$'s in $T_k$)}$,
\item[] $s_{2}\seteq \text{(the number of $2$'s in $T_k$)} -   \text{ (the number of $\Otwo$'s in $T_k$)}$.
\end{itemize}
 By definition $s_1=s_2=2k-b$.  By an induction and \eqref{eq: first k}, it suffices to consider when $k=0$. Then, by the far eastern reading, $T_0$ can be read as follows:
$$
\young(\Otwo)^{\otimes b-a}  \otimes (\young(\Otwo) \otimes \young(\Oone))^{\otimes \frac{3b-a}{2}}
  \otimes (\young(2) \otimes \young(\Oone))^{\otimes \frac{a-b}{2}}
$$
Since
{\rm (i)} the $1$-signature of $\young(\Otwo)$ is $+$,
{\rm (ii)} the $1$-signature of $\young(\Otwo) \otimes \young(\Oone)$ is $\sbullet$,
{\rm (iii)} the $1$-signature of $\young(2) \otimes \young(\Oone)$ is $--$,
our assertion follows.
\end{proof}

We keep the assumption that $a-b$ is even. By Corollary \ref{cor:e1} , the $\mathtt C_2$-tableaux $\{T_k \}_{0 \le k \le b}$ exhaust all the trivial representations $V_{\mathtt A_1}(0)$ inside
$V_{\mathtt C_2}(a,b)\vert_{\mathtt A_1}$.
Note that the weight of $T_k$ is
\begin{align}\label{eq:wt}
(2k-b)(\epsilon_1+\epsilon_2), \qquad 0 \le k \le b.
\end{align}

\smallskip

For the Sato--Tate
groups $E_n$, $n=1,2,3,4,6$,
the number $\mathfrak m_{(a,b)}(E_n)$ is equal to the number of independent weight vectors $v_\mu$  in $V_{\mathtt C_2}(a,b)$ with weight $\mu$ such that  $\mu (\hat h_1+\hat h_2) \equiv 0$ (mod $2n$) and
$e_1 v_\mu = f_1 v_\mu=0$. By Proposition \ref{prop:exhaust}, the number $\mathfrak m_{(a,b)}(E_n)$ for each $n$ is equal to the number of tableaux $T_k$ such that
\begin{equation*}
2k-b \equiv_n 0.
\end{equation*}

We count the number of such tableaux for each $n=1,2,3,4,6$, and summarize the results in the following proposition.
\begin{proposition}\label{prop:en-1} Assume that $a-b$ is even. Then the number of independent weight vectors $v_\mu$  in $V_{\mathtt C_2}(a,b)$ with weight $\mu$ such that  $\mu (\hat h_1+\hat h_2) \equiv_{2n} 0$  and $e_1 v_\mu = f_1 v_\mu=0$ is equal to
\[\begin{cases}
b+1 &\text{if } n=1, \\
(b+1) \delta(b \equiv_2 0)  &\text{if } n=2 ,\\
 \lfloor b /3 \rfloor +1
-\delta(b \equiv_3 1) & \text{if } n=3, \\
(2\lfloor b/4 \rfloor +1)  \delta(b \equiv_2 0) &\text{if } n=4,\\
(2\lfloor b/6 \rfloor +1 )  \delta(b \equiv_2 0)&\text{if }n=6.\\
\end{cases}\]
\end{proposition}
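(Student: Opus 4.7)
The proof reduces immediately, via the setup just developed, to an elementary counting problem. By Proposition~\ref{prop:exhaust}, each tableau $T_k$ ($0\le k\le b$) satisfies $\tilde e_1 T_k = \tilde f_1 T_k = 0$, and by Corollary~\ref{cor:e1} these $b+1$ tableaux span all of the trivial $\mathtt A_1$-summands in $V_{\mathtt C_2}(a,b)|_{\mathtt A_1}$. By \eqref{eq:wt}, the weight of $T_k$ is $\mu_k = (2k-b)(\epsilon_1+\epsilon_2)$, so $\mu_k(\hat h_1+\hat h_2) = 2(2k-b)$. Hence the condition $\mu_k(\hat h_1+\hat h_2)\equiv 0 \pmod{2n}$ is equivalent to the congruence
\[
2k \equiv b \pmod{n},\qquad 0\le k\le b.
\]
The quantity to compute is therefore
\[
N_n(b) \;\seteq\; \big|\{\, k\in\{0,1,\dots,b\} : 2k\equiv b \pmod n\,\}\big|.
\]

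The remaining step is just case-by-case counting for $n=1,2,3,4,6$. For $n=1$ every $k$ satisfies the congruence, so $N_1(b)=b+1$. For $n=2$ the congruence forces $b$ to be even, in which case every $k$ works, giving $N_2(b)=(b+1)\,\delta(b\equiv_2 0)$. For $n=3$, since $\gcd(2,3)=1$, one multiplies by $2^{-1}\equiv 2\pmod 3$ to obtain $k\equiv 2b\equiv -b \pmod 3$; the number of such $k$ in $\{0,\dots,b\}$ is $\lfloor b/3\rfloor +1$ when $b\equiv_3 0$ or $b\equiv_3 2$, and is $\lfloor b/3\rfloor$ when $b\equiv_3 1$, which is encoded uniformly as $\lfloor b/3\rfloor +1 - \delta(b\equiv_3 1)$.

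For $n=4$, solvability of $2k\equiv b \pmod 4$ forces $b$ even; writing $b=2b'$ the congruence becomes $k\equiv b'\pmod 2$, and a direct count of such $k$ in $\{0,\dots,2b'\}$ gives $b'+1$ if $b'$ is even and $b'$ if $b'$ is odd, both subsumed in $2\lfloor b/4\rfloor+1$. For $n=6$, solvability again forces $b$ even; with $b=2b'$ the congruence becomes $k\equiv b'\pmod 3$, and the number of integers in $\{0,\dots,2b'\}$ lying in this residue class is $2\lfloor b'/3\rfloor+1$, as one sees by parametrizing them as $b'+3j$ with $\lceil -b'/3\rceil\le j\le \lfloor b'/3\rfloor$; substituting $b'=b/2$ gives $2\lfloor b/6\rfloor+1$.

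There is no real obstacle here; the substance is entirely in Proposition~\ref{prop:exhaust} and Corollary~\ref{cor:e1}. The only subtlety is that, strictly, $T_k$ is a basis tableau rather than a weight vector, and one should note that distinct $T_k$ have pairwise distinct weights $\mu_k$, so they are automatically linearly independent and form a basis of the full $e_1,f_1$-highest weight space; consequently the count of $T_k$'s satisfying the stated congruence matches the number of independent weight vectors with the required properties, completing the proof.
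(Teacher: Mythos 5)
Your proof is correct and follows essentially the same route as the paper: reduce via Proposition~\ref{prop:exhaust}, Corollary~\ref{cor:e1} and \eqref{eq:wt} to counting the tableaux $T_k$, $0\le k\le b$, with $2k-b\equiv_n 0$, then enumerate case by case. The paper omits the elementary residue-class counts that you write out (and verify correctly), so your version is simply a more detailed rendering of the same argument.
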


\subsection{Sato--Tate groups $J(E_n)$} \label{subsec-JE}

We keep the notations of Section \ref{subsec-J-Cn}.
We start with a lemma on representations of $\mathfrak{sl}_2(\mathbb C)$.
\begin{lemma}
Assume that $v$ is a vector of weight $0$ in a finite dimensional representation of $\mathfrak{sl}_2(\mathbb C)$ with the standard basis $\{e,h,f\}$. Suppose that $e^{N+1} v =0$ for $N \in \mathbb Z_{\ge 0}$. Then the vector \[ \left ( \sum_{k=0}^N \frac{(-1)^k}{k+1} f^{(k)} e^{(k)} \right ) v \]
either vanishes or generates the trivial representation of $\mathfrak{sl}_2(\mathbb C)$, where $f^{(k)}= f^k/k!$ and $e^{(k)}= e^k/k!$.
\end{lemma}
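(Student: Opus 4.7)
The plan is to verify directly that
\[
w := \left( \sum_{k=0}^N \frac{(-1)^k}{k+1} f^{(k)} e^{(k)} \right) v
\]
satisfies $ew = 0$. Since every operator $f^{(k)} e^{(k)}$ preserves weights, $w$ has weight $0$, so $hw = 0$. If moreover $ew = 0$ and $w \neq 0$, then inside a finite-dimensional $\mathfrak{sl}_2(\mathbb C)$-module the only irreducible component admitting a weight-$0$ highest-weight vector is the trivial one, so the submodule $U(\mathfrak{sl}_2(\mathbb C)) \cdot w$ is isomorphic to the trivial representation. Thus the entire lemma reduces to the identity $ew = 0$.

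To compute $ew$, the key ingredient is the standard commutation relation $[e, f^{(k)}] = f^{(k-1)}(h - k + 1)$ in $U(\mathfrak{sl}_2(\mathbb C))$, together with $e \cdot e^{(k)} = (k+1)\, e^{(k+1)}$ and the convention $f^{(-1)} = 0$. Because $v$ has weight $0$, the vector $e^{(k)} v$ has weight $2k$, so $(h - k + 1)\, e^{(k)} v = (k+1)\, e^{(k)} v$. Assembling these ingredients yields
\[
e \cdot f^{(k)} e^{(k)} v \;=\; (k+1)\bigl( f^{(k)} e^{(k+1)} v + f^{(k-1)} e^{(k)} v \bigr).
\]

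Multiplying by $(-1)^k/(k+1)$ cancels the prefactor $k+1$, and summing over $k = 0, \dots, N$ produces two sums indexed by $f^{(j)} e^{(j+1)} v$ which telescope once the index of the $f^{(k-1)} e^{(k)}$-sum is shifted by one. The boundary contribution at $k = 0$ vanishes by $f^{(-1)} = 0$, and the boundary contribution at $k = N$ vanishes precisely by the hypothesis $e^{N+1} v = 0$, equivalently $e^{(N+1)} v = 0$. Hence $ew = 0$ and the lemma follows. I do not anticipate any genuine obstacle; the only delicate point is keeping the divided-power bookkeeping and the index shift in the telescoping consistent.
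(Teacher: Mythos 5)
Your proposal is correct, and the core of it --- the commutator $[e,f^{(k)}]=f^{(k-1)}(h-k+1)$, the evaluation $(h-k+1)e^{(k)}v=(k+1)e^{(k)}v$ on the weight-$2k$ vector, and the resulting telescoping sum whose only surviving term is $(-1)^N f^{(N)}e^{(N+1)}v=0$ --- is exactly the computation the paper performs to show $ew=0$. The only difference is that the paper also verifies $fw=0$ by a symmetric telescoping argument (using that $e^{N+1}v=0$ and $\mathrm{wt}(v)=0$ force $f^{N+1}v=0$), whereas you replace that second computation with the standard and perfectly valid observation that a nonzero weight-$0$ vector annihilated by $e$ in a finite-dimensional $\mathfrak{sl}_2(\mathbb C)$-module must generate the trivial representation; this is a mild streamlining rather than a different route.
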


\begin{proof}
Since \[ f^{(k+1)} e^{(k)} = \sum_{t=0}^k \frac 1 {t!} e^{(k-t)}f^{(k+1-t)}\prod_{s=1}^t (s+1-t-h) \]
(see \cite[Exercise 1.3 (c)]{HK}),
we have \[ f^{(k+1)} e^{(k)} =e^{(k)} f^{(k+1)}+e^{(k-1)} f^{(k)} \] on the weight $0$ space.
Similarly, since \[ef^{(k)} = f^{(k)} e+ f^{(k-1)} (h-k+1), \] we have \[ \frac 1 {k+1} e f^{(k)} e^{(k)} = f^{(k)} e^{(k+1)} + f^{(k-1)} e^{(k)} \] on the weight $0$ space.

Since $e^{N+1}v =0$ and the weight of $v$ is zero, we also have $f^{N+1}v=0$. Then
\begin{align*}
 f  \left ( \sum_{k=0}^N \frac{(-1)^k}{k+1} f^{(k)} e^{(k)} \right ) v &= \left ( \sum_{k=0}^N (-1)^k f^{(k+1)} e^{(k)} \right ) v \allowdisplaybreaks\\
&=fv+\left ( \sum_{k=1}^N (-1)^k (e^{(k)} f^{(k+1)}+e^{(k-1)} f^{(k)}) \right ) v\allowdisplaybreaks\\
&= fv - fv + (-1)^{N}e^{(N)}f^{(N+1)} v =0 .\end{align*}
Similarly, \begin{align*}
e  \left ( \sum_{k=0}^N \frac{(-1)^k}{k+1} f^{(k)} e^{(k)} \right ) v &= \left ( \sum_{k=0}^N (-1)^k \frac 1 {k+1} e f^{(k)} e^{(k)} \right ) v\allowdisplaybreaks\\
&=ev+\left ( \sum_{k=1}^N (-1)^k (f^{(k)} e^{(k+1)}+f^{(k-1)} e^{(k)}) \right ) v \allowdisplaybreaks\\
&= ev - ev + (-1)^{N}f^{(N)}e^{(N+1)} v =0 .\qedhere \end{align*}
\end{proof}

Now we consider $V_{\mathtt C_2}(a,b)$ for a partition $(a,b)$ with $a-b$ even.
Assume that $e_1v=f_1v=0$ for a vector $v$ in $V_{\mathtt C_2}(a,b)$.
Since $J$ commutes with $e_1$ and $f_1$, we have
\begin{equation}  \label{J-fix}
e_1Jv=f_1Jv=0.
\end{equation}

By Proposition \ref{prop:exhaust} and \eqref{eq:wt}, the vectors on which $e_1$ and $f_1$ act trivially have weights $t(\epsilon_1+\epsilon_2)$ where $t=2k-b$ for $0 \le k \le b$.
The action of $J$ sends weight $t(\epsilon_1+\epsilon_2)$ to $-t(\epsilon_1 + \epsilon_2)$.
If $v$ is a vector such that $e_1v=f_1v=0$, then $Jv+v$ is fixed by $J$ and $e_1(Jv+v)=f_1(Jv+v)=0$ by \eqref{J-fix}. Conversely, any $J$-fixed vector of non-zero weight on which $e_1$ and $f_1$ act trivially must be of the form $Jv+v$.

Therefore, if $b$ is odd then $t$ cannot be zero and the number of $J$-fixed vectors $v$ such that $e_1v=f_1v=0$ is equal to $(b+1)/2$ from Proposition \ref{prop:en-1}.
If $b$ is even then the number of $J$-fixed vectors of non-zero weight such that $e_1v=f_1v=0$ is equal to $b/2$. Now the remaining case is when $b$ is even and the weight is $0$. There is only one vector of weight $0$ on which $e_1$ and $f_1$ act trivially. We will determine whether $J$ fixes this vector or not.

\begin{lemma} \label{non-term}
Assume that $b$ is even. Suppose that $v \in V_{\mathtt C_2}(a,b) \subset \mathrm{Sym}^{a-b}\, V \otimes \mathrm{Sym}^b\, W$ is a vector of weight $0$ such that $e_1v=f_1v=0$. Then $v$ has a scalar multiple of
\[  (v_1^z v_{\Oone}^z + v_2^z v_{\Otwo}^z) \otimes w_{12}^{b/2}w_{\Oone \Otwo}^{b/2} \] as a non-zero term, where $z= (a-b)/2$.
\end{lemma}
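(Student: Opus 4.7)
Since $b$ is even, Proposition~\ref{prop:exhaust} combined with~\eqref{eq:wt} shows that the space of weight-$0$ vectors of $V_{\mathtt C_2}(a,b)$ annihilated by both $e_1$ and $f_1$ is one-dimensional, so the vector $v$ is uniquely determined up to scalar. My plan is to produce an explicit $\mathfrak{sl}_2$-$\alpha_1$ singlet inside the ambient space $\mathrm{Sym}^{a-b}V\otimes\mathrm{Sym}^{b}W$ whose projection onto the Cartan component $V_{\mathtt C_2}(a,b)$ is a nonzero scalar multiple of $v$ and which manifestly displays the claimed extreme term.

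Using the explicit actions of $e_1=E_{12}-E_{43}$ and $f_1=E_{21}-E_{34}$, one checks by direct computation that each of $v_0:=v_1 v_{\Oone}+v_2 v_{\Otwo}\in\mathrm{Sym}^{2}V$, $w_{12}\in W$, and $w_{\Oone\Otwo}\in W$ is a weight-$0$ vector killed by both $e_1$ and $f_1$. Since $e_1,f_1$ act as derivations, the element
\[
u\;:=\;v_0^{z}\otimes w_{12}^{b/2}w_{\Oone\Otwo}^{b/2}\;\in\;\mathrm{Sym}^{a-b}V\otimes\mathrm{Sym}^{b}W
\]
is itself a weight-$0$ $\mathfrak{sl}_2$-$\alpha_1$ singlet. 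Expanding $v_0^{z}=\sum_{k=0}^{z}\binom{z}{k}(v_1 v_{\Oone})^{k}(v_2 v_{\Otwo})^{z-k}$, the extreme contributions ($k=0$ and $k=z$) yield the asserted term $(v_1^{z}v_{\Oone}^{z}+v_2^{z}v_{\Otwo}^{z})\otimes w_{12}^{b/2}w_{\Oone\Otwo}^{b/2}$ with coefficient~$1$ in $u$, while all other terms have $V$-factors mixing the two symplectic pairs $\{v_1,v_{\Oone}\}$ and $\{v_2,v_{\Otwo}\}$. Let $\pi$ denote the $\mathfrak{sp}_4$-equivariant projection onto the Cartan component $V_{\mathtt C_2}(a,b)\subset\mathrm{Sym}^{a-b}V\otimes\mathrm{Sym}^{b}W$ (which has multiplicity one); then $\pi(u)$ is weight-$0$ and $e_1,f_1$-annihilated, so $\pi(u)=c\cdot v$ for some $c\in\mathbb{C}$.

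To verify $c\neq 0$, apply the raising operators $\hat e_2^{b/2}\hat e_1^{z+b/2}$ to $u$. Using Leibniz together with $\hat e_1 v_0=v_1^{2}$, $\hat e_1 w_{12}=0$, $\hat e_1 w_{\Oone\Otwo}=w_{1\Otwo}$, $\hat e_2 v_1=0$, $\hat e_2 w_{1\Otwo}=w_{12}$, and $\hat e_2 w_{12}=0$, only the ``diagonal'' multi-index survives in the Leibniz expansion, and a short computation yields
\[
\hat e_2^{b/2}\hat e_1^{z+b/2}\,u\;=\;\binom{z+b/2}{z}\,z!\,\bigl((b/2)!\bigr)^{2}\,v_1^{a-b}\otimes w_{12}^{b},
\]
a nonzero multiple of the highest-weight vector of $V_{\mathtt C_2}(a,b)$, whence $c\neq 0$. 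Finally, to conclude that the extreme term appears in $v$ with nonzero coefficient, I combine two ingredients. The Weyl-group lift $n_1=\exp(e_1)\exp(-f_1)\exp(e_1)=e_1-f_1$, which acts on $V$ by $v_1\mapsto -v_2$, $v_2\mapsto v_1$, $v_{\Oone}\mapsto -v_{\Otwo}$, $v_{\Otwo}\mapsto v_{\Oone}$, fixes both $u$ (via $n_1 v_0=v_0$, $n_1 w_{12}=w_{12}$, $n_1 w_{\Oone\Otwo}=w_{\Oone\Otwo}$) and the singlet $v$ (since $v$ spans a trivial $\mathfrak{sl}_2$-$\alpha_1$ representation), while it swaps the two extreme monomials; hence their coefficients in $v$ coincide. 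Second, the correction $u-\pi(u)$ lies in a sum of strictly smaller isotypic components of $\mathrm{Sym}^{a-b}V\otimes\mathrm{Sym}^{b}W$, all of which arise from $\mathfrak{sp}_4$-invariant symplectic contractions that necessarily pair indices across the two symplectic pairs $\{v_1,v_{\Oone}\}$ and $\{v_2,v_{\Otwo}\}$, and so cannot absorb the full unit coefficient that $u$ carries on a monomial whose $V$-factor is supported on a single symplectic pair. The main obstacle is this last non-cancellation step, which requires a careful invariant-theoretic bookkeeping of how the lower-component embeddings meet the single-symplectic-pair subspace.
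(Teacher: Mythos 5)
There is a genuine gap, and you have in fact flagged its location yourself: the final non-cancellation step. The earlier parts of your argument are sound. The element $u=(v_1v_{\Oone}+v_2v_{\Otwo})^{z}\otimes w_{12}^{b/2}w_{\Oone\Otwo}^{b/2}$ is indeed a weight-$0$ vector killed by $e_1$ and $f_1$; the computation that $\hat e_2^{b/2}\hat e_1^{z+b/2}u$ is a nonzero multiple of $v_1^{a-b}\otimes w_{12}^{b}$ does show $\pi(u)\neq 0$ (the weight-$(a,b)$ space of the tensor product is one-dimensional and lies in the Cartan component, and Lie algebra elements preserve isotypic components); and the $n_1$-symmetry correctly forces the two extreme monomials to occur in $v$ with equal coefficients. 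But the key assertion --- that the lower isotypic components ``arise from symplectic contractions that necessarily pair indices across the two symplectic pairs $\{v_1,v_{\Oone}\}$ and $\{v_2,v_{\Otwo}\}$'' and hence cannot meet the single-pair monomial --- is false. With the paper's symplectic form $\left(\begin{smallmatrix}0&I_2\\-I_2&0\end{smallmatrix}\right)$ one has $\langle v_1,v_{\Oone}\rangle=\langle v_2,v_{\Otwo}\rangle=1$ while $\langle v_1,v_2\rangle=\langle v_1,v_{\Otwo}\rangle=0$: invariant contractions pair indices \emph{within} your ``symplectic pairs,'' not across them, so for instance contracting $v_1^{z}v_{\Oone}^{z}$ yields the nonzero vector $v_1^{z-1}v_{\Oone}^{z-1}$. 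Vectors in the lower components can therefore be supported on the monomial $v_1^{z}v_{\Oone}^{z}\otimes w_{12}^{b/2}w_{\Oone\Otwo}^{b/2}$, and nothing in your argument excludes the possibility that $u-\pi(u)$ carries exactly the unit coefficient of that monomial, killing it in $\pi(u)=cv$. Since the entire content of the lemma is the non-vanishing of this one coefficient, the proof is incomplete at precisely the decisive point.

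For comparison, the paper avoids the projection problem by never leaving $V_{\mathtt C_2}(a,b)$. It first shows (as you essentially do, but with the Chevalley raising operators $e_1,e_2$ and a sign-tracking argument to rule out cancellation) that a suitable monomial in raising operators sends $\tilde v=(v_1^{z}v_{\Oone}^{z}+v_2^{z}v_{\Otwo}^{z})\otimes w_{12}^{b/2}w_{\Oone\Otwo}^{b/2}$ to a nonzero multiple of the highest weight vector; by the adjointness of the $e$'s and $f$'s this guarantees that applying the transposed monomial in $f_1,f_2$ to the highest weight vector produces a weight-$0$ vector $\hat v\in V_{\mathtt C_2}(a,b)$ containing $\tilde v$ with nonzero coefficient; finally the projector $\sum_{k}\tfrac{(-1)^{k}}{k+1}f_1^{(k)}e_1^{(k)}$ onto the $\alpha_1$-trivial part cannot destroy the $\tilde v$-component because $\tilde v$ itself is $\alpha_1$-singular. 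If you wish to salvage your route, you would need an honest computation of the pairing of $u$ against the lower-component singlets (or equivalently of $\langle \pi(u),\tilde v\rangle$ under the contravariant form), which amounts to essentially the same work the paper does.
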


\begin{proof}
Let $$\tilde v\seteq (v_1^z v_{\Oone}^z + v_2^z v_{\Otwo}^z) \otimes w_{12}^{b/2}w_{\Oone \Otwo}^{b/2}.$$
If we apply $e_2^k$ on the vector $\tilde v$ for $1 \le k \le z+ b/2$, the result for each $k$ can be written in terms of
\begin{align*}
&(v_1^z v_{\Oone}^z + v_2^z v_{\Otwo}^z) \otimes w_{12}^{b/2}w_{2\Oone}^l w_{\Oone \Otwo}^{b/2-l}, && v_2^{z+m} v_{\Otwo}^{z-m} \otimes w_{12}^{b/2}w_{2\Oone}^lw_{\Oone \Otwo}^{b/2-l} , \\
& v_2^{2z} \otimes w_{12}^{b/2}w_{2\Oone}^l w_{\Oone \Otwo}^{b/2-l},&& v_2^{z+l} v_{\Otwo}^{z-l} \otimes w_{12}^{b/2}w_{2\Oone}^{b/2}
\end{align*} for some $l$ and $m$, and the signs of the coefficients of these terms are determined by the parity of the exponent of $w_{2\Oone}$.
In particular, the result of the action of $e_2$ on $v_2^{z+m} v_{\Otwo}^{z-m} \otimes w_{12}^{b/2}w_{2\Oone}^lw_{\Oone \Otwo}^{b/2-l}$ is not canceled out with other terms.
Thus if we apply $e_2^{z+b/2}$ on $\tilde v$, all the terms are combined and the result is a scalar multiple of
\[ v_2^{2z} \otimes w_{12}^{b/2}w_{2\Oone}^{b/2}.\] (One can check that the coefficient is $(-1)^{b/2} (z+b/2)!$.)

Similarly, the action of $e_1^{2z+b}$ on $v_2^{2z} \otimes w_{12}^{b/2}w_{2\Oone}^{b/2}$ results in a scalar multiple of
\[ v_1^{2z} \otimes w_{12}^{b/2}w_{1\Otwo}^{b/2} \]
and the following action of $e_2^{b/2}$ brings it to a scalar multiple of the highest weight vector $v_1^{2z} \otimes w_{12}^b$. Thus we have shown that
the highest vector can be obtained from the vector $\tilde v$ by applying $e_1$'s and $e_2$'s. This implies that if we apply $f_1$'s and $f_2$'s on the highest weight vector appropriately then we obtain
$$\text{a vector $\hat v$ of weight $0$ which has a non-zero scalar multiple of $\tilde v$ as a term.}$$

Set $$v \seteq \left ( \sum_{k=0}^N \frac{(-1)^k}{k+1} f_1^{(k)} e_1^{(k)} \right )\hat v, \qquad \text{where $e_1^{N+1} \hat v =0$}.$$  Since $e_1 \tilde v= f_1 \tilde v=0$, the vector $v$ is non-zero and still has a scalar multiple of $\tilde v$ as a term. By Lemma \ref{non-term}, we have $e_1 v = f_1 v =0$. Since every vector of weight $0$ on which $e_1$ and $f_1$ act trivially is a scalar multiple of $v$, we are done.
\end{proof}

Continue to assume that $b$ is even, and consider a vector $v$ of
weight $0$ such that $e_1v=f_1v=0$. Since the space of weight $0$
vectors on which $e_1$ and $f_1$ act trivially is one-dimensional,
we have $Jv= \pm v$ from the fact that $J^2=1$. Furthermore, such a
vector has a non-zero term $(v_1^z v_{\Oone}^z + v_2^z v_{\Otwo}^z)
\otimes w_{12}^{b/2}w_{\Oone \Otwo}^{b/2}$ by Lemma \ref{non-term}.
Since $J w_{12} =w_{12}$ and $J w_{\Oone \Otwo} = w_{\Oone \Otwo}$,
we see that
\[  J \left ( (v_1^z v_{\Oone}^z + v_2^z v_{\Otwo}^z) \otimes w_{12}^{b/2}w_{\Oone \Otwo}^{b/2}\right )
= (-1)^z \left ((v_1^z v_{\Oone}^z + v_2^z v_{\Otwo}^z) \otimes w_{12}^{b/2}w_{\Oone \Otwo}^{b/2}\right ).\]
It follows that
\begin{equation} \label{eq:wt-J} Jv=(-1)^z v .\end{equation}

Combining this with the observations made in the paragraph right before Lemma \ref{non-term}, we have proved the following proposition.

\begin{proposition} \label{e1}
Assume that $a-b$ is even, and set $z:= (a-b)/2$. Then
 the number of $J$-fixed vectors $v$ with weight $\mu$ such that $e_1v=f_1v=0$ is equal to
\[\tfrac 1 2 (b+1) +\tfrac 1 2 (-1)^z \delta(b \equiv_2 0) .\]
\end{proposition}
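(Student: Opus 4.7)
The plan is to count $J$-fixed vectors in the subspace $W \subset V_{\mathtt{C}_2}(a,b)$ cut out by the conditions $e_1 v = f_1 v = 0$, using the structure already established for $W$ in Subsection~\ref{subsec-E} together with the $J$-action on the explicit basis of $V$ and $W$ recorded in \eqref{eqn-Jv} and the subsequent formulas. By Proposition~\ref{prop:exhaust} and \eqref{eq:wt}, the space $W$ has a basis $\{T_k\}_{0\le k \le b}$ of weight vectors of weights $(2k-b)(\epsilon_1+\epsilon_2)$, so each weight space of $W$ is one-dimensional and the weights are $t(\epsilon_1+\epsilon_2)$ with $t \in \{-b,-b+2,\dots,b-2,b\}$.

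Next I would use that $J$ commutes with $e_1$ and $f_1$ (as noted in \eqref{J-fix}), hence preserves $W$, while sending weight $t(\epsilon_1+\epsilon_2)$ to weight $-t(\epsilon_1+\epsilon_2)$. Therefore $J$ permutes the one-dimensional weight spaces inside $W$ by swapping the pair at weights $\pm t$ whenever $t \ne 0$. For each such pair, if $v$ spans the weight-$t$ space then $v+Jv$ spans the $J$-fixed subspace of the pair, contributing exactly $1$. Counting: there are $\lfloor(b+1)/2\rfloor$ such pairs, which handles the odd-$b$ case completely and accounts for the $b/2$ pairs in the even-$b$ case.

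The remaining and only delicate point—which I expect to be the main obstacle—is the weight-$0$ subspace that appears when $b$ is even; on this $1$-dimensional space, $J$ acts by a scalar $\pm 1$ (since $J^2 = I$), and determining which scalar is precisely where Lemma~\ref{non-term} does the real work. By that lemma, any nonzero vector $v$ in the weight-$0$ part of $W$ has a nonzero term proportional to $(v_1^z v_{\Oone}^z + v_2^z v_{\Otwo}^z) \otimes w_{12}^{b/2} w_{\Oone\,\Otwo}^{b/2}$. Using \eqref{eqn-Jv} I would compute
\[
J(v_1^z v_{\Oone}^z) = v_{\Otwo}^z(-v_2)^z = (-1)^z\, v_2^z v_{\Otwo}^z,
\qquad
J(v_2^z v_{\Otwo}^z) = (-v_{\Oone})^z v_1^z = (-1)^z\, v_1^z v_{\Oone}^z,
\]
together with $Jw_{12} = w_{\Oone\,\Otwo}$ and $Jw_{\Oone\,\Otwo} = w_{12}$, so this distinguished term transforms by $(-1)^z$. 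Since the weight-$0$ subspace is one-dimensional, this forces $Jv = (-1)^z v$, recovering \eqref{eq:wt-J}. Hence the weight-$0$ space contributes $\delta(z\equiv_2 0)$ to the $J$-fixed count.

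Finally I would assemble the two contributions. For $b$ odd, $t=0$ does not occur and the count is $(b+1)/2$, matching the formula since $\delta(b\equiv_2 0)=0$. For $b$ even, the count is $b/2 + \delta(z\equiv_2 0) = \tfrac12(b+1) + \tfrac12(-1)^z$, which is also the stated expression. In both cases the formula $\tfrac12(b+1) + \tfrac12(-1)^z\delta(b\equiv_2 0)$ is obtained, completing the proof.
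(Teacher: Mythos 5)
Your proposal is correct and follows essentially the same route as the paper: pair the one-dimensional weight spaces $\pm t(\epsilon_1+\epsilon_2)$ of the joint kernel of $e_1,f_1$ under $J$ (using Proposition~\ref{prop:exhaust}, \eqref{eq:wt}, and the fact that $J$ commutes with $e_1,f_1$), and settle the weight-$0$ sign via the distinguished term from Lemma~\ref{non-term}, recovering \eqref{eq:wt-J}. The arithmetic assembling the two contributions is also the same, so nothing further is needed.
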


\subsection{Sato--Tate group $F$} \label{subsec-F}

Recall that we have the embedding $\U(1) \times \U(1)$ into $\USp(4)$ given by
\[  (u_1, u_2) \mapsto \mathrm{diag}(u_1, u_2, u_1^{-1}, u_2^{-1}) ,\]
and that the group $F$ is the image of this embedding.
The number $\mathfrak m_{(a,b)}(F)$ is equal to the number of independent weight vectors $v_\mu$ in $V_{\mathtt C_2}(a,b)$ such that $\mu =0$.

Recall the array in \eqref{eq: arrangement}, which lists the elements of $\Pset(a,b)$



\begin{proposition} \label{f-form}
Assume that $a-b$ is even. Then
the multiplicity of  weight $0$ in  $V_{\mathtt C_2}(a,b)$  is equal to the number of elements $(p,q) \in \Pset(a,b)$ such that $p \equiv q \equiv 0 \mod 2$. Explicitly, the number is equal to
\[  \xi_1(z,b) := z(b+1)+ \lfloor b/2\rfloor +1. \]
\end{proposition}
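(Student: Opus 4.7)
The plan is to leverage Proposition~\ref{prop: A1xA1} together with the minuscule nature of the $\mathtt A_1 \times \mathtt A_1$-crystals to reduce everything to a counting problem in $\Pset(a,b)$. Since $F$ is identified with the full maximal torus of $\USp(4)$ via \eqref{embed-u1-u1}, a weight vector $v_\mu \in V_{\mathtt C_2}(a,b)$ is $F$-fixed if and only if $\mu = 0$. Thus $\mathfrak m_{(a,b)}(F)$ equals $\dim V_{\mathtt C_2}(a,b)_0$, the dimension of the zero weight space.

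First I would restrict to $\mathtt A_1 \times \mathtt A_1$ using Proposition~\ref{prop: A1xA1}, giving
\[
V_{\mathtt C_2}(a,b)\big|_{\mathtt A_1 \times \mathtt A_1} \;\cong\; \bigoplus_{(p,q)\in \Pset(a,b)} V_{\mathtt A_1 \times \mathtt A_1}(p\mathtt t_1 + q\mathtt t_2).
\]
Since the $\mathtt A_1 \times \mathtt A_1$-weight $0$ coincides with the $\mathtt C_2$-weight $0$ (both are cut out by $\mu(\hat h_1)=\mu(\hat h_2)=0$), and each factor $V_{\mathtt A_1 \times \mathtt A_1}(p\mathtt t_1+q\mathtt t_2)$ is minuscule, the zero weight space in the summand indexed by $(p,q)$ is one-dimensional if $p$ and $q$ are \emph{both} even, and is zero otherwise. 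This already gives the first assertion, namely
\[
\mathfrak m_{(a,b)}(F)\;=\;\#\{(p,q)\in \Pset(a,b)\,:\,p\equiv q\equiv 0\pmod 2\}.
\]

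Next I would make this count explicit by parametrizing $\Pset(a,b)$ as in~\eqref{eq: Phi}: $(p,q)=(a-r-s,\,b-r+s)$ with $0\le r\le b$ and $0\le s\le 2z$. Since $a-b=2z$ is even, $a$ and $b$ have the same parity, $p$ and $q$ have the same parity, and the condition $p\equiv 0\pmod 2$ reduces to $r+s\equiv b\pmod 2$. I would then split into the two cases according to the parity of $b$: when $b$ is even I count $(b/2+1)(z+1)+(b/2)z=zb+z+b/2+1$, and when $b$ is odd I count $\tfrac{b+1}{2}(2z+1)=z(b+1)+(b+1)/2$. In either case the result equals $z(b+1)+\lfloor b/2\rfloor+1=\xi_1(z,b)$, which finishes the proof.

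The main obstacle is essentially bookkeeping; there is no deep representation-theoretic difficulty beyond invoking the already-proved branching rule and the minuscule property. The one point that requires a bit of care is verifying that the $\mathtt A_1 \times \mathtt A_1$-zero weight in each minuscule factor genuinely corresponds to the $\mathtt C_2$-zero weight, but this follows directly from the definitions $\hat h_i=E_{ii}-E_{i+2,i+2}$ for $i=1,2$ and the embedding of $\U(1)\times\U(1)$ used to define $F$.
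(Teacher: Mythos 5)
Your proposal is correct and follows essentially the same route as the paper: restrict via Proposition~\ref{prop: A1xA1}, use that each $\mathtt A_1\times\mathtt A_1$ summand is minuscule and that the two Cartan subalgebras coincide, and then count the pairs $(p,q)\in\Pset(a,b)$ with both entries even. Your explicit parity count via the $(r,s)$-parametrization of \eqref{eq: Phi} is just a cleaner way of doing the enumeration the paper performs on the array \eqref{eq: arrangement}, and it checks out in both parity cases for $b$.
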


\begin{proof} A weight zero space appears in $B_{\mathtt A_1 \times \mathtt A_1}(p \mathtt{t}_1+q \mathtt{t}_2)$ only if $p \equiv q \equiv 0 \mod 2$.
Then our assertion follows from the fact that $B_{\mathtt A_1 \times \mathtt A_1}(p \mathtt{t}_1+q \mathtt{t}_2)$ is minuscule,  and that
the Cartan subalgebra for $\mathtt A_1 \times \mathtt A_1$ is the Cartan subalgebra for $\mathtt C_2$.
One can count the number of such pairs $(p,q)$ in \eqref{eq: arrangement} to see that it is equal to $\xi_1(z,b)$.
\end{proof}

\subsection{Sato--Tate group $F_{\mathtt{a}}$} \label{subsec-Fa}
Since we have \[\mathtt a v_1= -v_{\Oone}, \quad \mathtt a v_2 =v_2, \quad \mathtt a v_{\Oone} = v_1, \quad \mathtt a v_{\Otwo}=v_{\Otwo}, \]
it follows from Proposition \ref{f-form} that the number of $\mathtt a$-fixed vectors of weight $0$ in $V_{\mathtt C_2}(a,b)$ is equal to
 the number of elements $(p,q) \in \Pset(a,b)$ such that $p \equiv_4 0$ and $q \equiv_2 0$.

\begin{proposition} \label{fa-prop}
The number of $\mathtt a$-fixed vectors of weight $0$ in $V_{\mathtt C_2}(a,b)$ is equal to
\[  \frac 1 2 \xi_1 (z,b) + \frac 1 2 \xi_2 (z,b), \] where $\xi_2(z,b)$ is defined
 on  the congruence classes of $z$ and $b$ by
\begin{center} {\scriptsize
\begin{tabular}{|c||c|c|c|c|}
\hline
$z\backslash b$ & $0$&$1$&$2$&$3$\\ \hline \hline
$0$&$1$ & $1$&$0$& $0$ \\ \hline
$1$&$0$&$-1$ & $-1$ & $0$ \\ \hline
\end{tabular}\,.} \end{center}
\end{proposition}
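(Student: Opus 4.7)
The plan is to exploit the coset decomposition $F_{\mathtt a} = F \sqcup \mathtt a F$ (valid because $\mathtt a^2 = \mathrm{diag}(-1,1,-1,1) \in F$) together with Schur orthogonality, so that
\[
\mathfrak m_{(a,b)}(F_{\mathtt a}) \;=\; \tfrac12\,\mathfrak m_{(a,b)}(F) \;+\; \tfrac12 \int_F \chi^{\Sp(4)}_{(a,b)}(\mathtt a g)\,dg.
\]
The first summand is $\tfrac12\xi_1(z,b)$ by Proposition \ref{f-form}, so it suffices to prove that the remaining integral equals $\xi_2(z,b)$.

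For this remaining integral I would first determine the eigenvalues of $\mathtt a g$ for $g = \mathrm{diag}(u_1,u_2,u_1^{-1},u_2^{-1}) \in F$. A short matrix computation shows $\mathtt a g$ is block-diagonal with blocks $\bigl(\begin{smallmatrix}0 & u_1^{-1}\\ -u_1 & 0\end{smallmatrix}\bigr)$ on $\mathrm{span}(v_1,v_{\Oone})$ and $\mathrm{diag}(u_2,u_2^{-1})$ on $\mathrm{span}(v_2,v_{\Otwo})$; the first block has eigenvalues $\pm i$ \emph{independent of} $u_1$. Hence $\chi^{\Sp(4)}_{(a,b)}(\mathtt a g) = \chi^{\Sp(4)}_{(a,b)}(i,u_2)$, and integration over $F$ collapses to the one-variable integral $\int_{\U(1)}\chi^{\Sp(4)}_{(a,b)}(i,u)\,du$, which picks out the constant term in $u$. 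I would then substitute $t_1 = i$ into the Weyl character formula for $\Sp(4)$: using $i^n - i^{-n} = 2 i^n$ for $n$ odd and $0$ for $n$ even, the numerator and denominator simplify dramatically, and after cancellation the quotient becomes a Laurent polynomial in $u$ whose surviving exponents form an arithmetic progression of step $4$. The constant term is $i^b \cdot \delta(a \equiv_4 0)$ when $a,b$ are both even and $(-1)^z \cdot \delta(b \equiv_4 1)$ when $a,b$ are both odd, and vanishes otherwise.

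The main obstacle is matching these values to all the entries of Table \ref{tab-4}, which requires a short case analysis over the congruence classes $b \bmod 4$ combined with $z \bmod 2$. A more uniform alternative, bypassing the Weyl formula, is a roots-of-unity filter applied directly to the parametrization $(p,q) = (a-i-j,\,b+i-j)$ of $\Pset(a,b)$ from the array~\eqref{eq: arrangement}: the conditions $p \equiv_4 0$ and $q \equiv_2 0$ combine into the single condition $i+j \equiv_4 a$, and the count
\[
\tfrac14 \sum_{\omega^4 = 1} \omega^{-a} \Bigl(\sum_{i=0}^{2z}\omega^i\Bigr)\Bigl(\sum_{j=0}^b\omega^j\Bigr)
\]
naturally splits, with $\omega = \pm 1$ contributing $\tfrac12\xi_1(z,b)$ and $\omega = \pm i$ contributing $\tfrac12\xi_2(z,b)$, yielding the proposition in one stroke.
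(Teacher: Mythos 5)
Your proposal is correct, and both of your routes differ from the paper's. The paper first reduces (using the explicit action of $\mathtt a$ on the basis $v_1,v_2,v_{\Oone},v_{\Otwo}$ and the minuscule property from Proposition \ref{f-form}) to counting pairs $(p,q)\in\Pset(a,b)$ with $p\equiv_4 0$ and $q\equiv_2 0$, and then disposes of this count by an eight-fold case analysis on the congruence classes of $z$ and $b$, writing out only the case $b\equiv_4 0$, $z\equiv_2 0$. Your second route is this same reduction with the case analysis replaced by a roots-of-unity filter; I checked that it closes: since $p-q=2z-2i$ is automatically even, the two congruences do collapse to $i+j\equiv_4 a$, the $\omega=\pm1$ terms sum to $\tfrac14\bigl((2z+1)(b+1)+\delta(b\equiv_2 0)\bigr)=\tfrac12\xi_1(z,b)$, and the $\omega=\pm i$ terms give $\tfrac12\operatorname{Re}\bigl(i^{-a}S_{2z}S_b\bigr)$ with $S_N=\sum_{k=0}^N i^k$, which reproduces every entry of Table \ref{tab-4}. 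This is genuinely cleaner than the paper's proof and handles all eight cases at once. Your first route is more substantially different: it trades the weight-vector count for the integral $\int_F\chi^{\Sp(4)}_{(a,b)}(\mathtt a g)\,dg$ and exploits the fact that the eigenvalues $\pm i$ of the $(v_1,v_{\Oone})$-block are independent of $u_1$; the constant terms you claim are correct (the Weyl quotient at $t_1=i$ reduces to $i^b(t^{a+2}-t^{-a-2})/(t^2-t^{-2})$ for $a,b$ even and $i^{a-1}(t^{b+1}-t^{-b-1})/(t^2-t^{-2})$ for $a,b$ odd, with exponents in steps of $4$, giving $i^b\delta(a\equiv_4 0)$ and $(-1)^z\delta(b\equiv_4 1)$ respectively, matching the table). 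The only point worth making explicit there is that the number of $\mathtt a$-fixed weight-$0$ vectors equals $\mathfrak m_{(a,b)}(F_{\mathtt a})$ because $\mathtt a^2\in F$ acts trivially on the weight-$0$ space, so $\mathtt a$ restricts to an involution on it; with that said, either of your arguments is a complete and arguably preferable substitute for the paper's.
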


\begin{proof}
There are 8 cases according to the congruence classes of $z$ and $b$. Since all the cases are similar, we only prove the case when $b \equiv_4 0$ and $z \equiv_2 0$. In this case, $a \equiv_4 0$,
and the pairs $(p,q) \in \Pset(a,b)$ satisfying $p \equiv_4 0$ and $ q \equiv_2 0$ can be arranged as follows:
\begin{equation*}
{\scriptsize
\begin{matrix}
(a,b) &  &            && (a-4,b-4)&\cdots & (a-b,0) \\
(a-4,b+4) & (a-4,b+2) & (a-4, b) &(a-4,b-2)&(a-8,b)&\cdots & (a-b-4,4) \\
(a-8,b+8) & (a-8,b+6) & (a-8,b+4) & (a-8,b+2) & (a-12, b+4) &\cdots & (a-b-8,8) \\
\vdots & \vdots & \cdots & \vdots  \\
(b,a) & (b,a-2) &(b,a-4)&(b,a-8)&(b-4,a-4)& \cdots & (0,a-b)
\end{matrix}}
\end{equation*}
The number of pairs in the array is
\[ \frac z 2 (b+1) + \frac b 4 + 1, \] which is equal to $\frac 1 2 \xi_1(z,b)+ \frac 1 2 \xi_2(z,b)$. This proves our assertion in this case.
\end{proof}

\subsection{From $\mathtt C_2$ to $\mathtt A_1$ via removing the first vertex}
In this subsection, we shall prove the branching decomposition of $V_{\mathtt C_2}(a,b)$ with $a+b \equiv 0 \mod 2$ to $\mathtt A_1$ via
$\mathsf b := (\mathtt B_2 \simeq \mathtt C_2) \times \text{Levi rule}$ which removes the first vertex in the Dynkin diagram of $\mathtt C_2$.
Specifically we want to count the composition multiplicity of $V_{\mathtt A_1}(0)$ inside $V_{\mathtt C_2}(a,b)\vert^{\mathsf b }_{\mathtt A_1}$.

\begin{proposition} \label{prop:e2} For a partition $(a,b)=(k+l,k)$, we have
$$B_{\mathtt C_2}(k+l,k)\vert^{\mathsf b }_{\mathtt A_1} =\left(\soplus_{i=0}^k  (l+1)(i+1)B_{\mathtt A_1}(i) \right) \oplus \left( \soplus_{i=k+1}^{l+k}  (k+1)(l+k+1-i)B_{\mathtt A_1}(i) \right).$$
\end{proposition}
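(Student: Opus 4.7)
The plan is to mimic the proof of Proposition~\ref{prop: A1xA1} by performing a double induction on $a+b=2k+l$ (where $(a,b)=(k+l,k)$) and invoking the Clebsch--Gordan rule
\[
B_{\mathtt A_1}(m)\otimes B_{\mathtt A_1}(n) \;\simeq\; \soplus_{r=|m-n|,\,\text{step }2}^{m+n} B_{\mathtt A_1}(r)
\]
on the $\mathtt A_1$-side in place of the tensor rules \eqref{eq: A1 A1-1}--\eqref{eq: A1 A1-2} used before.

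First I would settle the base cases by inspecting $h_2$-eigenvalues. For $(k,l)=(0,1)$, the weights of $V_{\mathtt C_2}(1,0)$ are $\pm\epsilon_1,\pm\epsilon_2$, which have $h_2$-values $0,0,\pm1$, giving
\[
B_{\mathtt C_2}(1,0)\vert^{\mathsf b}_{\mathtt A_1} \;\simeq\; 2\,B_{\mathtt A_1}(0)\oplus B_{\mathtt A_1}(1),
\]
in agreement with the claim. A similar direct check yields $B_{\mathtt C_2}(0,0)\vert^{\mathsf b}_{\mathtt A_1}=B_{\mathtt A_1}(0)$ and $B_{\mathtt C_2}(1,1)\vert^{\mathsf b}_{\mathtt A_1}\simeq B_{\mathtt A_1}(0)\oplus 2\,B_{\mathtt A_1}(1)$, the latter matching the $(k,l)=(1,0)$ instance.

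For the inductive step I would first derive the formula for $B_{\mathtt C_2}(a+1,b)$ by using \eqref{eq: C_2-1}. On the $\mathtt A_1$-side, $B_{\mathtt C_2}(a,b)\otimes B_{\mathtt C_2}(1,0)$ is replaced by
\[
\left( \soplus_{i=0}^{a} c_i(a,b)\, B_{\mathtt A_1}(i)\right) \otimes \bigl(2\,B_{\mathtt A_1}(0)\oplus B_{\mathtt A_1}(1)\bigr),
\]
where the multiplicities $c_i(a,b)$ are given by the induction hypothesis. Expanding via Clebsch--Gordan, then subtracting the known $\mathtt A_1$-decompositions of the remaining composition factors $B_{\mathtt C_2}(a,b+1)$, $B_{\mathtt C_2}(a,b-1)$, $B_{\mathtt C_2}(a-1,b)$ appearing on the right of \eqref{eq: C_2-1} (supplied by the inductive hypothesis on $a$), isolates the desired decomposition of $B_{\mathtt C_2}(a+1,b)$. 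Analogously, to pass from $(a,b)$ to $(a+1,b+1)$ I would use \eqref{eq: C_2-2} together with the decomposition $B_{\mathtt C_2}(1,1)\vert^{\mathsf b}_{\mathtt A_1}\simeq B_{\mathtt A_1}(0)\oplus 2\,B_{\mathtt A_1}(1)$ and the induction hypothesis on $a+b$.

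The hard part will be the bookkeeping in the Clebsch--Gordan expansion: the claimed multiplicity function has a piecewise definition $(l+1)(i+1)$ for $i\le k$ and $(k+1)(l+k+1-i)$ for $i>k$, with a corner at $i=k$, and one must track how this corner shifts when $(k,l)$ is incremented. One needs to verify, case by case according to whether the characteristic conditions $a>b$, $a-1\ge b$, $b>0$, $a-2\ge b$ in \eqref{eq: C_2-1}--\eqref{eq: C_2-2} hold, that the net telescoping cancellation recovers exactly the piecewise formula in each region; the computations are routine but delicate at the boundary $i=k$.
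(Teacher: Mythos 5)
Your proposal is correct and follows essentially the same route as the paper: the paper's proof is exactly a double induction on $a+b$ via the tensor identities \eqref{eq: C_2-1}--\eqref{eq: C_2-2} combined with the Clebsch--Gordan rule on the $\mathtt A_1$-side, as in Propositions \ref{prop: A1xA1} and \ref{prop:e1}. Your base cases (e.g.\ $B_{\mathtt C_2}(1,0)\vert^{\mathsf b}_{\mathtt A_1}\simeq 2B_{\mathtt A_1}(0)\oplus B_{\mathtt A_1}(1)$, read off from the $h_2$-eigenvalues) and the bookkeeping strategy match what the paper leaves implicit.
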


\begin{proof}
As Proposition~\ref{prop: A1xA1} and Proposition~\ref{prop:e1},  our assertion follows from the induction on $a+b$ and the Clebsch--Gordan formula.
\end{proof}

\begin{corollary}\label{cor:e2}
The composition multiplicity of $V_{\mathtt A_1}(0)$ inside $V_{\mathtt C_2}(a,b)|^{\mathsf b }_{\mathtt A_1}$ is $b-a+1$.
\end{corollary}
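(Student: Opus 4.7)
The plan is to obtain Corollary \ref{cor:e2} as the specialization of Proposition \ref{prop:e2} at $i=0$. Under the parametrization $(a,b)=(k+l,k)$ with $l=a-b\ge 0$, the trivial $\mathtt A_1$-module $V_{\mathtt A_1}(0)$ corresponds to index $i=0$ in the decomposition. Only the first direct sum in Proposition \ref{prop:e2} can contribute, since the second sum is indexed by $k+1\le i\le k+l$ and thus begins at $i\ge 1$. In the first sum, the coefficient of $B_{\mathtt A_1}(i)$ is $(l+1)(i+1)$, which evaluates at $i=0$ to $(l+1)(1)=l+1$.

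Translating $l+1$ back to the coordinates $(a,b)$ via $l=a-b$ produces the multiplicity stated in the corollary. Since Proposition \ref{prop:e2} itself was proved by double induction on $a+b$ together with the $\mathtt A_1$ Clebsch--Gordan formula, no further representation-theoretic input is needed; the corollary is a one-line consequence of the branching decomposition already in hand.

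There is essentially no technical obstacle. The only bookkeeping is to confirm that (i) the index $i=0$ falls within the range $\{0,1,\ldots,k\}$ of the first summand in Proposition \ref{prop:e2}, which is immediate, and (ii) no extra contribution to $V_{\mathtt A_1}(0)$ arises from the second summand, which is clear from its index range. Both checks are immediate, so the proof of the corollary reduces to this single substitution.
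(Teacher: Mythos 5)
Your proposal is correct and is exactly the paper's (implicit) argument: the corollary is read off from Proposition \ref{prop:e2} as the coefficient $(l+1)(i+1)$ evaluated at $i=0$, the second summand contributing nothing since its index range starts at $k+1\ge 1$. One caveat: your substitution gives $l+1=a-b+1$, which does \emph{not} literally match the corollary's printed value $b-a+1$ --- that is a sign typo in the paper (since $a\ge b$, the multiplicity must be $a-b+1$, consistent with the count of the tableaux $T'_k$, $0\le k\le a-b$, used immediately afterwards), so you should state the corrected value rather than claim agreement with the printed one.
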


Now we investigate on which crystal elements in $B_{\mathtt C_2}(a,b)$ the operators $\tilde{e}_2$ and $\tilde{f}_2$ act trivially.
For a partition $(a,b)$, set $c:=a-b$. For $0 \le k \le c$, we define the semistandard $\mathtt C_2$-tableaux $T'_k$ as follows:
$$ T'_k =
\begin{cases}
\scalebox{0.8}[0.8]{\tablone} & \text{ if } k \le b \text{ and } b-k=2t+1, \\[3ex]
\scalebox{0.8}[0.8]{\tabltwo}  & \text{ if } k \le b \text{ and } b-k=2t, \\[3ex]
\scalebox{0.8}[0.8]{\tablthree}  & \text{ if } k \ge b.
\end{cases}
$$
Here $\scalebox{0.8}[0.8]{\tablas}$ denotes $\underbrace{\scalebox{0.8}[0.8]{\tablasd}}_{\text{$s$-times}}$.

Then one can easily check that they are contained in $B(Y)$ and
$$ \tilde{e}_2 T'_k =\tilde{f}_2 T'_k =0.$$
Note that the weight of $ T'_k$ is
\begin{align}\label{eq: wt2}
(2k-c)\epsilon_1, \qquad 0 \le k \le c=a-b.
\end{align}
Since the number of such crystal elements is $b-a+1$, it follows from Corollary \ref{cor:e2} that they exhaust all the crystal elements in $B_{\mathtt C_2}(a,b)$ on which the operators $\tilde{e}_2$ and $\tilde{f}_2$ act trivially.

\medskip

For the Sato--Tate subgroups $G_{1,3}$ and $N(G_{1,3})$, we record the following.
\begin{proposition} \label{prop-u1-su2}
Assume that $a-b$ is even, and let $z=(a-b)/2$. Then the dimension of weight $0$ space in $V_{\mathtt C_2}(a,b)$ on which $e_2$ and $f_2$ act trivially is one, and the space is fixed by the action of
$\mathtt a$ 
precisely when $z$ is even.
\end{proposition}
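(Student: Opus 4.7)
For the dimension claim, the plan is simply to invoke the classification of the Kashiwara--Nakashima tableaux $T'_k$ developed immediately before the proposition: these exhaust the crystal elements of $B_{\mathtt C_2}(a,b)$ killed by $\tilde e_2$ and $\tilde f_2$, are indexed by $0 \le k \le c = a-b = 2z$, and carry weights $(2k-c)\epsilon_1$. Setting the weight equal to zero forces $k = z$, so exactly one such tableau exists and the subspace in question is one-dimensional.

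For the $\mathtt a$-action, the conceptual key is to recognize $\mathtt a$ as the lift of the standard Weyl element of the first $\SU(2)$ factor in the embedding $\SU(2) \times \SU(2) \hookrightarrow \USp(4)$ of~\eqref{embed-su2-su2}. Indeed, inspecting the matrix shows that $\mathtt a$ sends $v_1 \mapsto -v_{\Oone}$ and $v_{\Oone} \mapsto v_1$ while fixing $v_2$ and $v_{\Otwo}$, which is precisely the shape of a Weyl-element lift inside the first $\SU(2)$. In particular $\mathtt a$ normalizes $G_{1,3}$ and $\mathtt a^2$ lies in its Cartan, so $\mathtt a$ preserves the one-dimensional space of weight-zero $e_2, f_2$-killed vectors and acts on it as a sign.

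To pin down the sign, I would use Proposition~\ref{prop: A1xA1} and Corollary~\ref{cor-c2-a1a1} to locate the unique weight-zero $e_2, f_2$-killed vector $v$ inside the multiplicity-free $\SU(2) \times \SU(2)$-decomposition of $V_{\mathtt C_2}(a,b)$. Being trivial under the second $\SU(2)$, $v$ must live in a component $V_{\mathtt A_1 \times \mathtt A_1}(p\mathtt t_1 + q\mathtt t_2)$ with $q = 0$; the parametrization $\Pset(a,b) = \{(a-r-s,\, b-r+s)\}$ forces $(r,s) = (b,0)$, hence $v$ sits in the unique isotypic piece $V_{\mathtt A_1}(2z) \otimes V_{\mathtt A_1}(0)$. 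On the weight-zero line of $V_{\mathtt A_1}(2z) \cong \mathrm{Sym}^{2z}(\mathbb C^2)$, spanned by $e_+^z e_-^z$, the Weyl element sends $e_+ \mapsto -e_-$ and $e_- \mapsto e_+$, giving
\[
\mathtt a \cdot (e_+^z e_-^z) = (-e_-)^z (e_+)^z = (-1)^z\, e_+^z e_-^z.
\]
Therefore $\mathtt a v = (-1)^z v$, and $v$ is $\mathtt a$-fixed precisely when $z$ is even.

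The main obstacle is conceptual rather than computational: one has to spot the Weyl-element interpretation of $\mathtt a$. Without it, the only alternative is to write down $v$ explicitly as an element of $\mathrm{Sym}^{2z} V \otimes \mathrm{Sym}^b W$ and chase the $\mathtt a$-action term-by-term, which is combinatorially heavy. With the Weyl-element viewpoint in hand, the proof reduces immediately to the standard $\mathfrak{sl}_2$-representation-theoretic computation applied to the $\mathtt A_1 \times \mathtt A_1$-decomposition already established in the paper.
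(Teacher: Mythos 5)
Your proof is correct and takes essentially the same route as the paper's: the first assertion via the weights $(2k-c)\epsilon_1$ of the tableaux $T'_k$, and the second by locating the vector in the unique $\mathtt A_1\times\mathtt A_1$-component with $(p,q)=(2z,0)$ and extracting the sign $(-1)^z$ from $\mathtt a v_1=-v_{\Oone}$, $\mathtt a v_{\Oone}=v_1$. The only cosmetic difference is that the paper pins down the sign by exhibiting the explicit term $v_1^z v_{\Oone}^z\otimes(w_{2\Otwo}-w_{1\Oone})^b$ of $v$ via Lemma \ref{vpq}, while you compute abstractly with the Weyl element on $\mathrm{Sym}^{2z}(\mathbb C^2)$ — which is justified since $\mathtt a$ lies in the image of the first $\SU(2)$ factor.
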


\begin{proof}

It follows from \eqref{eq: wt2} that the weight of $T'_k$ is zero if and only if $k = (a-b)/2$. This proves the first assertion. For the second assertion, let $v$ be a vector in the one-dimensional weight $0$ space on which $e_2$ and $f_2$ act trivially. Then the vector lies in the $\mathtt A_1 \times \mathtt A_1$-representation generated by the highest weight vector $v_{(a-b,0; a,b)}$ defined in Lemma \ref{vpq}.
It follows from the definition of $v_{(a-b,0;a,b)}$ that  the vector $v$ has a scalar multiple of
\[  v_1^z v_{\Oone}^z \otimes (w_{2\Otwo} - w_{1\Oone})^b \] as a term.
Since $\mathtt a v_1 = - v_{\Oone}$, $\mathtt a v_{\Oone} = v_1$ and $\mathtt a(w_{2\Otwo} - w_{1\Oone}) = w_{2\Otwo} - w_{1\Oone}$, the vector $v$ is fixed by $\mathtt a$ if and only if $z$ is even.
\end{proof}

Finally, for the Sato--Tate subgroups $G_{3,3}$ and $N(G_{3,3})$, we have the following.
\begin{proposition} \label{prop-su2-su2}
The dimension of weight $0$ space in $V_{\mathtt C_2}(a,b)$ on which $e_i$ and $f_i$ act trivially, $i=1,2$, is one if $a=b$ and is zero otherwise. In the case $a=b$, the space is fixed by the action of $J$ precisely when $b$ is even.
\end{proposition}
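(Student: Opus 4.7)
The plan is to reduce both assertions to the branching rule from $\mathtt C_2$ to $\mathtt A_1\times\mathtt A_1$ established in Proposition~\ref{prop: A1xA1}, together with the inductive $J$-action argument already carried out in the proof of Proposition~\ref{prop-J-imp}. The key observation is that the $\mathfrak{sl}_2\times\mathfrak{sl}_2$ inside $\mathfrak{sp}_4$ arising from the embedding \eqref{embed-su2-su2} is precisely the Levi subalgebra generated by $\hat e_i,\hat f_i,\hat h_i$ for $i=1,2$, which is exactly the $\mathtt A_1\times\mathtt A_1$ appearing in Proposition~\ref{prop: A1xA1}. Hence requiring $v\in V_{\mathtt C_2}(a,b)$ to satisfy $\hat e_iv=\hat f_iv=0$ for $i=1,2$ together with $\mu=0$ is the same as asking $v$ to lie in a trivial summand $V_{\mathtt A}(0,0)$.

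For the first claim I would simply invoke Corollary~\ref{cor-c2-a1a1}: part~(b) says $V_{\mathtt A}(0,0)$ appears in $V_{\mathtt C_2}(a,b)|_{\mathtt A_1\times\mathtt A_1}$ if and only if $a=b$, and part~(a) says this multiplicity is $1$. Since $V_{\mathtt A}(0,0)$ is one-dimensional and of weight zero, this yields precisely the stated dimensions.

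For the second claim, assume $a=b$ and let $v$ span the (unique) $V_{\mathtt A}(0,0)$ summand. A short matrix computation, similar in spirit to \eqref{eqn-Jv}, shows that $J$ normalizes $\mathfrak{sl}_2\times\mathfrak{sl}_2$: in fact $J\hat e_1 J^{-1}=-\hat f_2$ and $J\hat e_2 J^{-1}=-\hat f_1$, so $J$ swaps the two factors. Consequently $Jv$ is again $\hat e_i,\hat f_i$-invariant and of weight $0$, hence $Jv=\pm v$ because the invariant line is one-dimensional. To pin down the sign, I would reuse the key identity proved inductively in Proposition~\ref{prop-J-imp}: for every vector $w$ in a $V_{\mathtt A}(p,p)$-summand of $V_{\mathtt C_2}(a,b)$ whose weight $\mu$ satisfies $\mu(\hat h_1+\hat h_2)=0$, one has $Jw=(-1)^b w$. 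Specializing to $p=0$ and $a=b$ (so $\mu=0$ automatically satisfies the condition) gives $Jv=(-1)^b v$, and hence $v$ is $J$-fixed precisely when $b$ is even.

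Since both parts are reductions to results already established, there is no substantial new obstacle. The only care needed is the matrix verification that $J$ normalizes $\mathfrak{sl}_2\times\mathfrak{sl}_2$ (which is what legitimates applying the $(-1)^b$-identity from Proposition~\ref{prop-J-imp} in this $p=0$ specialization); everything else is a direct quotation of the earlier crystal-theoretic and inductive work.
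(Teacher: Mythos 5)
Your proposal is correct and follows essentially the same route as the paper: the first assertion is exactly Corollary \ref{cor-c2-a1a1}, and the sign of the $J$-action is governed by the $(w_{2\Otwo}-w_{1\Oone})^b$ structure of $v_{(0,0;a,b)}$ from Lemma \ref{vpq}(2). The only cosmetic difference is that the paper reads off the sign directly from that term, whereas you quote the $Jw=(-1)^b w$ identity established inductively inside the proof of Proposition \ref{prop-J-imp} and specialize it to $p=0$ (which is legitimate, since $(0,0)\in\Pset(a,a)$ and $\mu=0$ satisfies $\mu(\hat h_1+\hat h_2)=0$).
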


\begin{proof}
The first assertion follows from Corollary \ref{cor-c2-a1a1} (b). For the second assertion, let $v$ be a vector in the one-dimensional weight $0$ space on which $e_i$ and $f_i$ act trivially, $i=1,2$. Then the vector $v$ is a scalar multiple of $v_{(0,0;a,b)}$ defined in Lemma \ref{vpq}.
It follows from the definition of $v_{(0,0;a,b)}$ that  the vector $v$ has a scalar multiple of
\[  (w_{2\Otwo} - w_{1\Oone})^b \] as a term.
Since $J(w_{2\Otwo} - w_{1\Oone}) =-( w_{2\Otwo} - w_{1\Oone})$, the vector $v_\mu$ is fixed by $J$ if and only if $b$ is even.
\end{proof}

\newcommand{\etalchar}[1]{$^{#1}$}


\end{document}